\newcommand{\R}{\mathbb{R}}
\newcommand{\N}{\mathbb{N}}
\def\b#1{\boldsymbol{#1}}
\def\dx{\,\mathrm dx}
\def\ds{\,\mathrm ds}
\def\der{\mathrm{D}}
\def\div{\,\mathrm{div}\,}
\renewcommand{\epsilon}{\varepsilon}
\newcounter{AssCount}
\renewcommand{\theAssCount}{\textbf{(A{\arabic{AssCount}})}}
\newcounter{AssListCount}
\newcommand{\hp}{s}
\theoremstyle{plain}
\newtheorem{theorem}{Theorem}
\newtheorem{lemma}[theorem]{Lemma}
\theoremstyle{remark}
\newtheorem{remark}{Remark}
\begin{document}

\title{Numerical approximation of phase field based shape and topology optimization for fluids 
\thanks{The authors gratefully acknowledge the support of the 
Deutsche Forschungsgemeinschaft via the SPP 1506 entitled ``Transport processes at fluidic interfaces''. 
They also thank  Stephan Schmidt for providing comparison calculations for the rugby example. } 
}

\author{Harald~Garcke\footnotemark[2]
\and Claudia~Hecht\footnotemark[2]
\and Michael~Hinze\footnotemark[3]
\and Christian~Kahle\footnotemark[3]
}
\date{}

\maketitle

\renewcommand{\thefootnote}{\fnsymbol{footnote}}
\footnotetext[2]{Fakult\"at f\"ur Mathematik, Universit\"at Regensburg, 93040 Regensburg, Germany
({\tt \{Harald.Garcke, Claudia.Hecht\}@mathematik.uni-regensburg.de}).}
\footnotetext[3]{Schwerpunkt Optimierung und Approximation, Universit\"at Hamburg, Bundesstrasse 55, 20146 Hamburg, Germany
({\tt \{Michael.Hinze, Christian.Kahle\}@uni-hamburg.de}).}
\renewcommand{\thefootnote}{\arabic{footnote}}

\begin{abstract}
\noindent We consider the problem of finding optimal shapes of fluid domains. The fluid obeys the
Navier--Stokes equations. Inside a holdall container we use a phase field approach using diffuse
interfaces to describe the domain of free flow. We formulate a corresponding optimization problem 
where flow outside the fluid domain is penalized. 
The resulting formulation of the shape optimization problem is shown to be well-posed, 
hence there exists a minimizer, and first order optimality conditions are derived.

For the numerical realization we introduce a mass conserving gradient flow and obtain a
Cahn--Hilliard type system, which is integrated numerically using the finite element method.
An adaptive concept using reliable, residual based error estimation is exploited for the resolution
of the spatial mesh.

The overall concept is numerically investigated and comparison values are provided.
\end{abstract}

\noindent \textbf{Key words. }
Shape optimization, topology optimization, diffuse interfaces, Cahn--Hilliard, Navier--Stokes,
adaptive meshing.\\

\noindent \textbf{AMS subject classification. }
35Q35, 35Q56, 35R35, 49Q10,  65M12, 65M22, 65M60, 76S05.\\

\pagestyle{myheadings}
\markboth{H. GARCKE, C. HECHT, M. HINZE, C. KAHLE}{SHAPE OPTIMIZATION FOR NAVIER--STOKES FLOW}

\section{Introduction}
Shape and topology optimization in fluid mechanics is an important mathematical field attracting 
more and more attention in recent years. 
One reason therefore is certainly the wide application fields spanning from optimization 
of transport vehicles like airplanes and cars, over biomechanical  and industrial production
processes to the optimization of music instruments. Due to the complexity of the emerging problems 
those questions have to be treated carefully with regard to modelling, simulation and interpretation of the results. 
Most approaches towards shape optimization, in particular in the field of shape optimization in fluid mechanics, 
deal mainly with numerical methods, 
or concentrate on combining reliable CFD methods to shape optimization strategies 
like the use of shape sensitivity analysis. Anyhow, it is a well-known fact that well-posedness 
of problems in optimal shape design is a difficult matter where only a few analytical results are available 
so far, see for instance 
\cite{ulbrichulbrich,BrandenburgLindemannUlbrichUlbrich_advancedNumMeth_DesignNSflow,
kawohl2000optimal, pironneau,Schmidt_shape_derivative_NavierStokes, sverak}.
In particular, classical formulations of shape optimization problems lack in general existence
of a minimizer and hence the correct mathematical description has to be reconsidered. 
Among first approaches towards well-posed formulations in this field we mention 
in particular the work  \cite{borrvall}, where a porous medium approach is introduced 
in order to obtain a well-posed problem at least for the special case of minimizing 
the total potential power in a Stokes flow. As discussed in \cite{evgrafov,evgrafov2} 
it is not to be expected that this formulation can be extended without further ado to the stationary 
Navier--Stokes equations or to the use of different objective functionals.

In this work we propose a well-posed formulation for shape optimization in fluids, 
which will turn out to even allow for topological changes. 
Therefore, we combine the porous medium approach of \cite{borrvall} and a phase field approach 
including a regularization by the Ginzburg-Landau energy. This results in a diffuse interface problem, 
which can be shown to approximate a sharp interface problem for shape optimization in fluids 
that is penalized by a perimeter term. Perimeter penalization in shape optimization problems was already
introduced by \cite{ambrosioButtazzo} and has since then been applied to a lot of problems 
in shape optimization, see for instance \cite{bourdin_chambolle}. 
Also phase field approximations for the perimeter penalized problems have been discussed in this field, 
and we refer here for instance to \cite{relatingphasefield, bourdin_chambolle, burger}. 
But to the best of our  knowledge, neither a perimeter penalization nor a phase field approach 
has been applied to a fluid dynamical setting before.

Here we  use the stationary incompressible Navier--Stokes equations as a fluid model, 
but we briefly describe how the Stokes equations could also be used here. 
The resulting diffuse interface problem is shown to inherit a minimizer, 
in contrast to most formulations in shape optimization. 
The resulting formulation turns out to be an optimal control problem with control in the coefficients, 
and hence one can derive optimality conditions in form of a variational inequality. 
Thus, we can formulate a gradient flow for the corresponding reduced objective functional 
and arrive in a Cahn--Hilliard type system. 
Similar to \cite{HintermuellerHinzeTber__AFEM_for_CH}, we use a Moreau--Yosida relaxation in order 
to handle the pointwise constraints on the design variable. 
We formulate the finite element discretization of the resulting problem using a 
splitting approach for the Cahn--Hilliard equation. 
The Navier--Stokes system is discretized with the help of Taylor--Hood elements and 
both variables in the Cahn--Hilliard equation are discretized with continuous, 
piecewise linear elements. In addition, we introduce an adaptive concept 
using residual based error estimates and a D\"orfler marking strategy, 
see also \cite{HintermuellerHinzeKahle_AFEM_for_CHNS, HintermuellerHinzeTber__AFEM_for_CH}.

The proposed approach is validated by means of several numerical examples. 
The first one shows in particular that even topological changes are allowed 
during the optimization process. 
The second example is the classical example of optimizing the shape of a ball in an outer flow. 
We obtain comparable results as in the literature and  discuss the results 
for different Reynolds numbers and penalization parameters. 
For this example, comparison values for further investigations are provided. 
As a third example and outlook, we briefly discuss the optimal embouchure of a bassoon, 
which was already examined by an engineering group at the 
Technical University of Dresden, see \cite{grundmann}. 
Besides, the behaviour of the different parameters of the model and their influence 
on the obtained solution in the above-mentioned numerical examples are investigated.

\section{Shape and topology optimization for Navier--Stokes flow}
\label{sec:Analysis} 
We  study the optimization of some objective functional depending on the shape, geometry and topology of a region which is filled with an incompressible Navier--Stokes fluid.
We use a holdall container $\Omega\subset\R^d$ which is fixed throughout this work and fulfills

\begin{list}{\theAssCount}{\usecounter{AssCount}}
  \item\label{a:Omega} $\Omega\subseteq\R^d$, $d\in\{2,3\}$, is a bounded Lipschitz domain with
  outer unit normal $\b n$ such that $\R^d\setminus\overline\Omega$ is connected.
  \setcounter{AssListCount}{\value{AssCount}}
\end{list}
Requiring the complement of $\overline\Omega$ to be connected simplifies certain aspects in the 
analysis of the Navier--Stokes system but could also be dropped, cf. \cite[Remark 2.7]{hecht}. 
As we do not want to prescribe the topology or geometric properties of the optimal fluid region in advance,
we state the optimization problem in the general framework of Caccioppoli sets.
Thus, a set is admissible if it is a measurable subset of $\Omega$ with finite perimeter.
Additionally, we impose a volume constraint by introducing a constant $\beta\in\left(-1,1\right)$
and optimize over the sets with volume equal to $0.5(\beta+1)\left|\Omega\right|$.
Since an optimization problem in this setting lacks in general existence of minimizers,
see for instance \cite{haberjog}, we introduce moreover a perimeter regularization.
Thus the perimeter term, multiplied by some weighting parameter $\gamma>0$ and a constant $c_0=\frac\pi2$
arising due to technical reasons, is added to the objective functional that we want to minimize.
The latter is given by $\int_\Omega f\left(x,\b u,\der\b u\right)\dx$,
where $\b u\in\b U:=\{\b u\in\b H^1(\Omega)\mid\div\b u=0,\b u|_{\partial\Omega}=\b g\}$
denotes the velocity of the fluid, and we assume

\begin{list}{\theAssCount}{\usecounter{AssCount}}\setcounter{AssCount}{\value{AssListCount}}
  \item\label{a:ObjectiveFctl}
  the functional $f:\Omega\times\R^d\times\R^{d\times d}\to\R$ is given such that
  \begin{align*}
    &F:\b H^1(\Omega)\to\R,\\
    &F\left(\b u\right):=\int_\Omega f\left(x,\b u(x),\der\b u(x)\right)\dx
  \end{align*}
  is continuous, weakly lower semicontinuous, radially unbounded in $\b U$, which means
  \begin{align}\label{a:FctlRadiallyUnbounded}
    \lim_{k\to\infty}\left\|\b u_k\right\|_{\b H^1(\Omega)}=+\infty\implies \lim_{k\to\infty}F\left(\b u_k\right)=+\infty
  \end{align}
  for any sequence $\left(\b u_k\right)_{k\in\N}\subseteq\b U$. Additionally, $F|_{\b U}$ has to be bounded from below.
  \setcounter{AssListCount}{\value{AssCount}}
\end{list}
Here and in the following we use the following function space:
$$\b V:=\left\{\b v\in\b H^1_0(\Omega)\mid\div\b u=0\right\}.$$
Additionally, we denote for some
$\varphi\in BV\left(\Omega,\left\{\pm1\right\}\right)$
the set
$E^\varphi:=\{\varphi\equiv 1\}$
and introduce
\begin{align*}
  \b U^\varphi:=\left\{\b u\in\b U\mid\b u=\b 0 \text{ a.e. in }\Omega\setminus
  E^\varphi\right\},
  \quad
  \b V^\varphi:=\left\{\b v\in\b V\mid\b v=\b 0 \text{ a.e. in }\Omega\setminus E^\varphi\right\},
\end{align*}
where we remark, that we denote $\R^d$-valued functions and function spaces of vector valued functions by boldface letters.
\begin{remark}
For the continuity of $F:\b H^1(\Omega)\to\R$, required in Assumption~\ref{a:ObjectiveFctl},
it is sufficient, that $f:\Omega\times\R^d\times\R^{d\times d}\to\R$ 
is a Carath\'{e}odory function, i.e. $f$ fulfills 
for a.e. $x\in\Omega$ a growth condition of the form
\begin{align*}
  \left|f\left(x,\b v,\b A\right)\right|\leq a(x)+b_1(x)|\b v|^p+b_2(x)|\b A|^2,
  \quad\forall \b v\in\R^d,  \b A\in\R^{d\times d}
\end{align*}
for some $a\in L^1(\Omega)$, $b_1,b_2\in L^\infty(\Omega)$ 
and some $p\geq 2$ for $d=2$ and $2\leq p\leq\nicefrac{2d}{d-2}$ for $d=3$.
\end{remark}
\bigskip

For the fluid mechanics, we use Dirichlet boundary conditions on $\partial\Omega$, thus there may be some inflow or some outflow,
and we allow additionally external body forces on the whole domain $\Omega$.

\begin{list}{\theAssCount}{\usecounter{AssCount}}\setcounter{AssCount}{\value{AssListCount} }
  \item\label{a:Forces}
  Here, $\b f\in\b L^2(\Omega)$ is the applied body force and
  $\b g\in\b H^{\frac12}\left(\partial\Omega\right)$  is some given boundary function such that
  $\int_{\partial\Omega}\b g\cdot\b n\ds=0$,
  \setcounter{AssListCount}{\value{AssCount}}
\end{list}
which are assumed to be given and fixed throughout this paper.

A typical objective functional used in this context is the total potential power, which is given by
\begin{align}\label{e:TotalPotPower}
  f\left(x,\b u,\der\b u\right):=\frac\mu2\left|\der\b u\right|^2-\b f(x)\cdot\b u.
\end{align}
In particular, we remark that this functional fulfills Assumption \ref{a:ObjectiveFctl}.

To formulate the problem, we introduce an one-to-one correspondence of Caccioppoli sets and
functions of finite perimeter by identifying $E\subset\Omega$ with
$\varphi:=2\chi_E-1\in BV\left(\Omega,\left\{\pm1\right\}\right)$
and notice that for any
$\varphi\in BV\left(\Omega,\left\{\pm1\right\}\right)$
the set $E^\varphi:=\left\{\varphi=1\right\}$
is the corresponding Caccioppoli set describing the fluid region.
We shall write $P_\Omega(E)$ for the perimeter of $E\subseteq\Omega$ in $\Omega$.
For a more detailed introduction to the theory of Caccioppoli sets and functions of bounded variations
we refer for instance to \cite{evans_gariepy,giusti}.

Altogether we arrive in the following optimization problem:
\begin{align}\label{e:ObjFctlSharp}
  \min_{\left(\varphi,\b u\right)} J_0\left(\varphi,\b u\right)
  :=\int_\Omega f\left(x,\b u,\der\b u\right)\dx+\gamma c_0P_\Omega\left(E^\varphi\right)
\end{align}
subject to
\begin{align*}
  \varphi\in\Phi_{ad}^0
  :=\left\{\varphi\in BV\left(\Omega,\left\{\pm1\right\}\right)\mid
  \int_\Omega\varphi\dx=\beta\left|\Omega\right|, \b U^\varphi\neq\emptyset\right\}
\end{align*}
and
\begin{subequations}
\label{e:StatNSSharpStrong}
\begin{align}
  -\mu\Delta\b u+\left(\b u\cdot\nabla\right)\b u+\nabla p&=\b f &&\text{in }E^\varphi,\label{e:StatNSSharpStrong1}\\
  -\div\b u&=0&&\text{in }\Omega,\\
  \b u&=\b0&&\text{in }\Omega\setminus E^\varphi,\\
  \b u&=\b g&&\text{on }\partial\Omega.
\end{align}
\end{subequations}

We point out that the velocity of the fluid is not only defined on the fluid region $E^\varphi$,
for some $\varphi\in\Phi_{ad}^0$, but rather on the whole of $\Omega$,
where in $E^\varphi$ it is determined by the stationary Navier-Stokes equations,
and on the remainder we set it equal to zero.
And so for an arbitrary function $\varphi\in BV(\Omega,\left\{\pm1\right\})$
the condition $\b u=\b 0$ a.e. in $\Omega\setminus E^\varphi$
and the non-homogeneous boundary data $\b u=\b g$ on $\partial\Omega$ may be inconsistent.
To exclude this case we impose the condition $\b U^\varphi\neq\emptyset$ on the admissible design functions in $\Phi_{ad}^0$. The state constraints \eqref{e:StatNSSharpStrong} have to be fulfilled in the
following weak sense:
find $\b u\in\b U^\varphi$ such that it holds
\begin{align*}
  \int_\Omega\mu\nabla\b u\cdot\nabla\b v+\left(\b u\cdot\nabla\right)\b u\cdot\b v\dx=\int_\Omega\b f\cdot\b v\dx\quad\forall\b v\in\b V^\varphi.
\end{align*}
Even though this shape and topology optimization problem gives rise to a large class of possible solutions,
numerics and analysis prefer more regularity for handling optimization problems.
One common approach towards more analytic problem formulations is a phase field formulation.
It is a well-known fact, see for instance \cite{modica}, that a multiple of the perimeter functional
is the $L^1(\Omega)$-$\Gamma$-limit for $\epsilon\searrow0$ of the Ginzburg-Landau energy,
which is defined by
\begin{align*}
  \mathcal E_\epsilon\left(\varphi\right):=
  \begin{cases}
    \int_\Omega\frac{\epsilon}{2}\left|\nabla\varphi\right|^2+\frac1\epsilon\psi\left(\varphi\right)\dx, & \text{if }\varphi\in H^1(\Omega),\\
    +\infty, & \text{otherwise.}
  \end{cases}
\end{align*}
Here $\psi:\R\to\overline\R$ is a potential with two global minima and in this work we focus on a double obstacle potential given by
\begin{align*}
  \psi(\varphi):=
  \begin{cases}
    \psi_0\left(\varphi\right), & \text{if }\left|\varphi\right|\leq1,\\
    +\infty,&\text{otherwise,}
  \end{cases}\quad \psi_0\left(\varphi\right):=\frac12\left(1-\varphi^2\right).
\end{align*}
Thus replacing the perimeter functional by the Ginzburg-Landau energy in the objective functional,
we arrive in a so-called diffuse interface approximation,
where the hypersurface between fluid and non-fluid region is replaced by a interfacial layer
with thickness proportional to some small parameter $\epsilon>0$.
Then the design variable $\varphi$ is allowed to have values in $\left[-1,1\right]$ instead of only $\pm1$.
To make sense of the state equations in this setting,
we introduce an interpolation function
$\alpha_\epsilon:\left[-1,1\right]\to \left[0,\overline\alpha_\epsilon\right]$
fulfilling the following assumptions:

\begin{list}{\theAssCount}{\usecounter{AssCount}}\setcounter{AssCount}{\value{AssListCount}}
  \item \label{a:Alpha}
  Let $\alpha_\epsilon:\left[-1,1\right]\to\left[0,\overline\alpha_\epsilon\right]$
  be a decreasing, surjective and twice continuously differentiable function for $\epsilon>0$.
  
  It is required that $\overline\alpha_\epsilon>0$ is chosen such that
  $\lim_{\epsilon\searrow0}\overline\alpha_\epsilon=+\infty$ and $\alpha_\epsilon$
  converges pointwise to some function $\alpha_0:[-1,1]\to[0,+\infty]$. Additionally, we impose $\alpha_\delta(x)\geq\alpha_\epsilon(x)$
  if $\delta\leq\epsilon$ for all $x\in\left[-1,1\right]$, $\lim_{\epsilon\searrow0}\alpha_\epsilon(0)<\infty$ and a growth condition
  of the form $\overline\alpha_\epsilon=\hbox{o}\left(\epsilon^{-\frac23}\right)$.%
  
  \setcounter{AssListCount}{\value{AssCount}}
\end{list}
\begin{remark}\label{r:ConvergenceRateTwoDim}
We remark, that for space dimension $d=2$ we can even choose
$\overline\alpha_\epsilon=\hbox{o}\left(\epsilon^{-\kappa}\right)$
for any $\kappa\in(0,1)$.
\end{remark}

By adding the term $\alpha_\epsilon(\varphi)\b u$ to \eqref{e:StatNSSharpStrong1}
we find that the state equations \eqref{e:StatNSSharpStrong}
then ``interpolate'' between the steady-state Navier--Stokes equations
in $\left\{\varphi=1\right\}$ and some Darcy flow through porous medium
with permeability $\overline\alpha_\epsilon^{-1}$ at $\left\{\varphi=-1\right\}$.
Thus simultaneously to introducing a diffuse interface approximation,
we weaken the condition of non-permeability through the non-fluid region.
This porous medium approach has been introduced for topology optimization in fluid flow by \cite{borrvall}.
To ensure that the velocity vanishes outside the fluid region in
the limit $\epsilon\searrow0$ we add moreover a penalization term to
the objective functional and finally arrive in the following phase field formulation of the problem:

\begin{equation}\label{e:ObjFctl}
  \begin{split}
    \min_{\left(\varphi,\b u\right)}J_\epsilon\left(\varphi,\b u\right)&:=
    \int_\Omega\frac12\alpha_\epsilon\left(\varphi\right)\left|\b u\right|^2\dx
    +\int_\Omega f\left(x,\b u,\der\b u\right)\dx\\
    &+\frac{\gamma\epsilon}{2}\int_\Omega\left|\nabla\varphi\right|^2\dx
    +\frac\gamma\epsilon\int_\Omega\psi\left(\varphi\right)\dx
  \end{split}
\end{equation}
subject to
\begin{align}
  \varphi\in\Phi_{ad}
  :=\left\{
  \varphi\in H^1(\Omega)
  \mid\left|\varphi\right|\leq1\text{ a.e. in }\Omega,
  \int_\Omega\varphi\dx=\beta\left|\Omega\right|
  \right\},
  \label{eq:ObjtFctl_PhiConstraint}
\end{align}
and
\begin{subequations}\label{e:StatNSStrong}
\begin{align}
  \alpha_\epsilon(\varphi)\b u-\mu\Delta\b u+\left(\b u\cdot\nabla\right)\b u+\nabla p&=\b f &&\text{in }\Omega,\\
  -\div\b u&=0&&\text{in }\Omega,\\
  \b u&=\b g&&\text{on }\partial\Omega.
\end{align}
\end{subequations}

Considering the state equations \eqref{e:StatNSStrong}, we find the following solvability result:
\begin{lemma}\label{l:StateEquationsWellDefined}
For every $\varphi\in L^1(\Omega)$ such that $\left|\varphi\right|\leq1$ a.e. in $\Omega$
there exists some $\b u\in\b U$ such that \eqref{e:StatNSStrong} is fulfilled in the following sense:
\begin{align}\label{e:StatNSWeak}
  \int_\Omega\alpha_\epsilon\left(\varphi\right)\b u\cdot\b v
  +\mu\nabla\b u\cdot\nabla\b v
  +\left(\b u\cdot\nabla\right)\b u\cdot\b v\dx
  =\int_\Omega\b f\cdot\b v\dx\quad\forall\b v\in\b V.
\end{align}
Besides, if there exists a solution $\b u\in\b U$ of \eqref{e:StatNSWeak} such that it holds
\begin{align}\label{e:SmallnessForUnique}
  \left\|\nabla\b u\right\|_{\b L^2(\Omega)}<\frac{\mu}{K_\Omega},
  \quad
  K_\Omega:=
  \begin{cases}
    \nicefrac23\sqrt2|\Omega|^{\frac23}, & \text{if }d=3,\\
    0.5\sqrt{\left|\Omega\right|}, & \text{if }d=2,
  \end{cases}
\end{align}
then this is the only solution of \eqref{e:StatNSWeak}.
\end{lemma}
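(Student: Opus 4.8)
\emph{Setup.} The statement consists of an existence assertion for the nonlinear weak problem \eqref{e:StatNSWeak} and a conditional uniqueness assertion under the smallness bound \eqref{e:SmallnessForUnique}, and I would treat these separately. Throughout I abbreviate the trilinear form by $b(\b a,\b b,\b c):=\int_\Omega(\b a\cdot\nabla)\b b\cdot\b c\dx$ and recall the two structural identities $b(\b a,\b c,\b c)=0$ whenever $\div\b a=0$ and $\b c\in\b H^1_0(\Omega)$, together with the antisymmetry $b(\b a,\b b,\b c)=-b(\b a,\b c,\b b)$ under the same hypotheses; both follow by integration by parts. The bilinear part $a(\b u,\b v):=\int_\Omega\alpha_\epsilon(\varphi)\b u\cdot\b v+\mu\nabla\b u\cdot\nabla\b v\dx$ is bounded, since $0\le\alpha_\epsilon(\varphi)\le\overline\alpha_\epsilon<\infty$ by Assumption~\ref{a:Alpha}, and because $\alpha_\epsilon\ge0$ it is coercive on $\b V$ via $a(\b v,\b v)\ge\mu\|\nabla\b v\|_{\b L^2(\Omega)}^2$ and Poincar\'e's inequality.

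\emph{Existence.} First I would remove the inhomogeneous boundary datum. As $\int_{\partial\Omega}\b g\cdot\b n\ds=0$ by Assumption~\ref{a:Forces}, there is a solenoidal extension $\b G\in\b H^1(\Omega)$ with $\div\b G=0$ and $\b G|_{\partial\Omega}=\b g$; moreover, invoking the Hopf/Leray construction (facilitated by the geometric Assumption~\ref{a:Omega}) one may choose $\b G=\b G_\delta$ so that $|b(\b v,\b G_\delta,\b v)|\le\delta\|\nabla\b v\|_{\b L^2(\Omega)}^2$ for all $\b v\in\b V$, with $\delta<\mu$ fixed. Writing $\b u=\b w+\b G_\delta$ reduces \eqref{e:StatNSWeak} to finding $\b w\in\b V$, which I would construct by a Galerkin scheme, the finite-dimensional problems being solvable by a Brouwer fixed-point argument. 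Testing the Galerkin equation with its own solution and using $b(\b w,\b w,\b w)=0$ and $b(\b G_\delta,\b w,\b w)=0$ together with the Hopf bound on $b(\b w,\b G_\delta,\b w)$ yields, after Young's inequality on the remaining terms (the fixed forcing $b(\b G_\delta,\b G_\delta,\b w)$, the couplings through $a$, and $\b f$), an a priori bound on $\|\nabla\b w\|_{\b L^2(\Omega)}$ uniform in the discretization level. Passing to the limit then rests on the compact embedding $\b H^1(\Omega)\hookrightarrow\hookrightarrow\b L^4(\Omega)$, valid for $d\le3$, which gives strong convergence sufficient to pass to the limit in the quadratic term $b$; the term carrying the fixed bounded coefficient $\alpha_\epsilon(\varphi)$ passes by weak $\b L^2(\Omega)$-convergence. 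Setting $\b u:=\b w+\b G_\delta\in\b U$ completes this part.

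\emph{Uniqueness.} Suppose $\b u\in\b U$ solves \eqref{e:StatNSWeak} and satisfies \eqref{e:SmallnessForUnique}, and let $\tilde{\b u}\in\b U$ be any further solution, so that $\b w:=\b u-\tilde{\b u}\in\b V$. Subtracting the two weak equations, testing with $\b v=\b w$, and expanding the convective contribution as $b(\b u,\b w,\b w)+b(\b w,\b u,\b w)-b(\b w,\b w,\b w)$ makes all but one term vanish by the identities above, leaving
\begin{align*}
  \int_\Omega\alpha_\epsilon(\varphi)\left|\b w\right|^2\dx+\mu\left\|\nabla\b w\right\|_{\b L^2(\Omega)}^2=-b\left(\b w,\b u,\b w\right).
\end{align*}
Since $\alpha_\epsilon\ge0$, the left-hand side dominates $\mu\|\nabla\b w\|_{\b L^2(\Omega)}^2$. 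The crux is then the explicit trilinear estimate $|b(\b w,\b u,\b w)|\le K_\Omega\|\nabla\b u\|_{\b L^2(\Omega)}\|\nabla\b w\|_{\b L^2(\Omega)}^2$ with exactly the constant $K_\Omega$ of \eqref{e:SmallnessForUnique}, which I would derive from H\"older's inequality combined with a Gagliardo--Nirenberg--Sobolev inequality $\|\b w\|_{\b L^4(\Omega)}^2\le K_\Omega\|\nabla\b w\|_{\b L^2(\Omega)}^2$ on $\b H^1_0(\Omega)$ carrying the dimension-dependent explicit constant. This gives $(\mu-K_\Omega\|\nabla\b u\|_{\b L^2(\Omega)})\|\nabla\b w\|_{\b L^2(\Omega)}^2\le0$, and the smallness condition \eqref{e:SmallnessForUnique} renders the prefactor strictly positive, forcing $\nabla\b w=\b0$ and hence $\b w=\b0$ by Poincar\'e.

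\emph{Main obstacle.} The delicate point is the a priori estimate in the existence part: because $\alpha_\epsilon$ may vanish (for $\varphi\equiv1$) one cannot rely on the zeroth-order term, so the convective contribution $b(\b w,\b G,\b w)$ must be absorbed into the viscous term, which is precisely what forces the careful Hopf/Leray choice of the lifting $\b G_\delta$. In the uniqueness part the only genuinely nontrivial ingredient is tracking the sharp Sobolev constant so as to recover exactly the stated $K_\Omega$.
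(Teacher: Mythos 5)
Your proposal is correct, and its uniqueness half is exactly the classical energy argument the paper alludes to; but your existence half takes a genuinely different route. The paper dispenses with the lemma in one sentence: existence is obtained from the theory of pseudo-monotone operators and uniqueness ``follows similar to classical results'' on the stationary Navier--Stokes equations, both delegated to the cited references (Galdi's book and Hecht's thesis). You instead construct the solution: a Hopf/Leray lifting $\b G_\delta$ of the boundary datum with $|b(\b v,\b G_\delta,\b v)|\leq\delta\|\nabla\b v\|_{\b L^2(\Omega)}^2$, $\delta<\mu$ (legitimate here because Assumption~\ref{a:Omega} makes $\partial\Omega$ connected, so the single total-flux condition in Assumption~\ref{a:Forces} suffices for the Hopf construction --- you are right that this is where the geometric hypothesis enters), Brouwer's theorem for the Galerkin problems, a uniform a priori bound by absorbing $b(\b w,\b G_\delta,\b w)$ into the viscous term, and passage to the limit via the compact embedding $\b H^1(\Omega)\hookrightarrow\hookrightarrow\b L^4(\Omega)$. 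The two routes are close relatives: the pseudo-monotone approach needs the same lifting and the same coercivity estimate, and hides the compactness argument inside the verification that the convective term is pseudo-monotone, after which an abstract surjectivity theorem finishes the proof; this is shorter on paper and treats the bounded monotone perturbation $\alpha_\epsilon(\varphi)\b u$ effortlessly. Your Galerkin scheme is more elementary and self-contained, at the price of carrying out the finite-dimensional approximation and limit passage by hand. The only point you leave open is the derivation of the explicit constant $K_\Omega$ in \eqref{e:SmallnessForUnique}, i.e. the inequality $\|\b w\|_{\b L^4(\Omega)}^2\leq K_\Omega\|\nabla\b w\|_{\b L^2(\Omega)}^2$ on divergence-free $\b H^1_0(\Omega)$ fields with precisely the stated dimension-dependent value; you correctly isolate this as the crux of the uniqueness statement but do not carry it out. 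Since the paper itself defers exactly this computation to the references, that omission is not a gap relative to the paper's own level of detail.
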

\begin{proof}
The existence proof is based on the theory on pseudo-monotone operators and the uniqueness statement follows similar
to classical results concerning stationary 
Navier--Stokes equations, see for instance \cite{galdi,hecht}.
\end{proof}
\begin{remark}\label{r:PressureStateEquations}
	Standard results infer from \eqref{e:StatNSWeak} that there exists a pressure 
	$p\in L^2(\Omega)$ associated to $\b u\in\b U$ such that \eqref{e:StatNSStrong} 
	is fulfilled in a weak sense, see \cite{galdi}. 
	But as we are not considering the pressure dependency in the optimization problem, 
	we drop those considerations in the following. For details on how to include the 
	pressure in the objective functional in this setting we refer to \cite{hecht}. 
\end{remark}

Using this result, one can show well-posedness of the optimal control problem in the phase field formulation
stated above by exploiting the direct method in the calculus of variations.
\begin{theorem}
There exists at least one minimizer $(\varphi_\epsilon,\b u_\epsilon)$ of
\eqref{e:ObjFctl}--\eqref{e:StatNSStrong}.
\end{theorem}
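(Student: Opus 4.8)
The plan is to apply the direct method in the calculus of variations to the reduced functional. Since Lemma~\ref{l:StateEquationsWellDefined} guarantees that for every admissible $\varphi\in\Phi_{ad}$ the state equation \eqref{e:StatNSWeak} possesses at least one solution $\b u\in\b U$, the minimization can be carried out over feasible pairs $(\varphi,\b u)$ satisfying the state constraint. First I would verify that the feasible set is nonempty and that the infimum is finite: the double obstacle potential $\psi$ is bounded on $[-1,1]$, the Ginzburg--Landau terms are nonnegative, $\alpha_\epsilon\geq0$ makes the penalization term nonnegative, and $F|_{\b U}$ is bounded below by Assumption~\ref{a:ObjectiveFctl}, so $J_\epsilon$ is bounded below and the infimum is a finite real number. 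Hence I can extract a minimizing sequence $(\varphi_k,\b u_k)_{k\in\N}$ of feasible pairs.

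Next I would establish compactness of the minimizing sequence. Because $J_\epsilon(\varphi_k,\b u_k)$ is bounded, the Ginzburg--Landau part controls $\|\nabla\varphi_k\|_{\b L^2(\Omega)}$, while the pointwise bound $|\varphi_k|\leq1$ together with the volume constraint controls $\|\varphi_k\|_{L^2(\Omega)}$; thus $(\varphi_k)$ is bounded in $H^1(\Omega)$ and, up to a subsequence, converges weakly in $H^1(\Omega)$, strongly in $L^2(\Omega)$, and pointwise a.e. to some $\varphi_\epsilon$. For the velocities, boundedness of $J_\epsilon$ together with the radial unboundedness \eqref{a:FctlRadiallyUnbounded} of $F$ in $\b U$ forces $(\b u_k)$ to be bounded in $\b H^1(\Omega)$; otherwise $F(\b u_k)\to+\infty$ would contradict the boundedness of the minimizing sequence (the other three terms being bounded below). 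Consequently $\b u_k\rightharpoonup\b u_\epsilon$ weakly in $\b H^1(\Omega)$ and strongly in $\b L^2(\Omega)$ (and in $\b L^4(\Omega)$ by Sobolev embedding) along a further subsequence.

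Then I would check that the limit pair is feasible. The constraint set $\Phi_{ad}$ is closed and convex, hence weakly closed in $H^1(\Omega)$, so $\varphi_\epsilon\in\Phi_{ad}$: the bound $|\varphi_\epsilon|\leq1$ passes to the limit by a.e. convergence and the volume constraint $\int_\Omega\varphi_\epsilon\dx=\beta|\Omega|$ by $L^2$-convergence. The crux is passing to the limit in the weak state equation \eqref{e:StatNSWeak} to show $\b u_\epsilon\in\b U$ solves it. The linear viscous term passes by weak $\b H^1$-convergence; the forcing term is unchanged; the term $\int_\Omega\alpha_\epsilon(\varphi_k)\b u_k\cdot\b v\dx$ converges because $\alpha_\epsilon$ is continuous and bounded on $[-1,1]$, giving $\alpha_\epsilon(\varphi_k)\to\alpha_\epsilon(\varphi_\epsilon)$ a.e. and boundedly, combined with strong $\b L^2$-convergence of $\b u_k$. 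The genuinely delicate term is the convective trilinear form $\int_\Omega(\b u_k\cdot\nabla)\b u_k\cdot\b v\dx$: here I would use strong $\b L^4(\Omega)$-convergence of $\b u_k$ against the weak $\b L^2$-convergence of $\nabla\b u_k$, noting that $\b v\in\b V$ is fixed and lies in $\b L^4$, so the product converges to $\int_\Omega(\b u_\epsilon\cdot\nabla)\b u_\epsilon\cdot\b v\dx$.

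Finally I would conclude by lower semicontinuity. The Ginzburg--Landau gradient term is convex and hence weakly lower semicontinuous, the potential term passes by Fatou using a.e. convergence, the penalization $\int_\Omega\frac12\alpha_\epsilon(\varphi_k)|\b u_k|^2\dx$ is weakly lower semicontinuous (its integrand being convex in $\b u$ with nonnegative continuous coefficient, together with strong $L^2$-convergence of $\b u_k$ and a.e. convergence of $\alpha_\epsilon(\varphi_k)$), and $F$ is weakly lower semicontinuous by Assumption~\ref{a:ObjectiveFctl}. Summing these gives $J_\epsilon(\varphi_\epsilon,\b u_\epsilon)\leq\liminf_{k\to\infty}J_\epsilon(\varphi_k,\b u_k)=\inf J_\epsilon$, so $(\varphi_\epsilon,\b u_\epsilon)$ is a minimizer. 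I expect the main obstacle to be the passage to the limit in the convective term of the state equation, since the trilinear form is not weakly continuous in general and one must carefully exploit the compact Sobolev embedding $\b H^1(\Omega)\hookrightarrow\hookrightarrow\b L^4(\Omega)$ (valid for $d\in\{2,3\}$) to upgrade weak to strong convergence of $\b u_k$ in the right norm.
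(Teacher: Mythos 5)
Your proof is correct and follows exactly the approach the paper indicates: the direct method in the calculus of variations, using Lemma~\ref{l:StateEquationsWellDefined} for feasibility, the radial unboundedness in Assumption~\ref{a:ObjectiveFctl} for coercivity in $\b u$, compact Sobolev embedding to pass to the limit in the convective term, and weak lower semicontinuity of each term of $J_\epsilon$. The paper itself only sketches this (deferring details to the thesis it cites), and your write-up is a faithful, complete realization of that same argument.
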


The proof is given in \cite{hecht}.

\bigskip

To derive first order necessary optimality conditions for a solution 
$(\varphi_\epsilon,\b u_\epsilon)$ of \eqref{e:ObjFctl}--\eqref{e:StatNSStrong} 
we introduce the Lagrangian 
$\mathcal L_\epsilon:\Phi_{ad}\times\b U\times\b V\to\R$ by
\begin{align*}
  \mathcal L_\epsilon\left(\varphi,\b u,\b q\right):=
  J_\epsilon(\varphi,\b u)
  -\int_\Omega\alpha_\epsilon\left(\varphi\right)\b u\cdot\b q+\mu\nabla\b u\cdot\nabla\b q+\left(\b u\cdot\nabla\right)\b u\cdot\b q-\b f\cdot\b q\dx.
\end{align*}
The variational inequality is formally derived by
\begin{equation}
  \begin{split}\label{e:LagrangianVariationalInequality}
    \mathrm D_\varphi\mathcal L_\epsilon\left(\varphi_\epsilon,\b u_\epsilon,\b
    q_\epsilon\right)\left(\varphi-\varphi_\epsilon\right),
    %
    %
		\geq0\quad\forall\varphi\in\Phi_{ad}
  \end{split}
\end{equation}
and the adjoint equation can be deduced by
\begin{align*}
  \mathrm D_{\b u}\mathcal L_\epsilon\left(\varphi_\epsilon,\b u_\epsilon,\b q_\epsilon\right)\left(\b v\right)
  %
  %
=0\quad\forall\b v\in\b V.
\end{align*}

Even though those calculations are only formally, we obtain therefrom  a first order optimality
system, which can be proved to be fulfilled for a minimizer of the optimal control problem stated
above, see \cite{hecht}:

\begin{theorem}\label{t:OptimalitySystem}
Assume $\left(\varphi_\epsilon,\b u_\epsilon\right)\in\Phi_{ad}\times\b U$
is a minimizer of \eqref{e:ObjFctl}--\eqref{e:StatNSStrong} such that
$\left\|\nabla\b u_\epsilon\right\|_{\b L^2(\Omega)}<\nicefrac{\mu}{K_\Omega}$.
Then the following variational inequality is fulfilled:
\begin{equation}\label{e:VariationalInequality}
  \begin{split}
    \left(\frac12\alpha'_\epsilon\left(\varphi_\epsilon\right)\left|\b u_\epsilon\right|^2
    +\frac\gamma\epsilon\psi'_0\left(\varphi_\epsilon\right)
    -\alpha'_\epsilon\left(\varphi_\epsilon\right)\b u_\epsilon\cdot\b q_\epsilon
    +\lambda_\epsilon,
    \varphi-\varphi_\epsilon\right)_{L^2(\Omega)}\\
    +\left(\gamma\epsilon\nabla\varphi_\epsilon,\nabla\left(\varphi-\varphi_\epsilon\right)\right)
    _{\b L^2(\Omega)}\geq0\quad\forall\varphi\in\overline{\Phi}_{ad},
  \end{split}
\end{equation}
with
\begin{align*}
  \overline{\Phi}_{ad}
  :=\left\{\varphi\in H^1(\Omega)\mid\left|\varphi\right|\leq1\,\text{ a.e. in  }\Omega\right\},
\end{align*}
where $\b q_\epsilon\in\b V$ is the unique weak solution to the following adjoint system:
\begin{subequations}\label{e:AdjointSystem}
\begin{align}
  \alpha_\epsilon\left(\varphi_\epsilon\right)\b q_\epsilon
  -\mu\Delta\b q_\epsilon
  +\left(\nabla\b u_\epsilon\right)^T\b q_\epsilon
  -\left(\b u_\epsilon\cdot\nabla\right)\b q_\epsilon
  +\nabla\pi_\epsilon&=\alpha_\epsilon\left(\varphi_\epsilon\right)\b u_\epsilon\nonumber\\
  &\hspace{-4cm}+\der_2f\left(\cdot,\b u_\epsilon,\der\b u_\epsilon\right)-\div \der_3f\left(\cdot,\b u_\epsilon,\der\b u_\epsilon\right)&&\text{in }\Omega,\\
  -\div\b q_\epsilon&=0&&\text{in }\Omega,\\
  \b q_\epsilon&=\b0&&\text{on }\partial\Omega.
\end{align}
\end{subequations}
Here, we denote by $\der_if\left(\cdot,\b u_\epsilon,\der\b u_\epsilon\right)$ with $i=2$ and $i=3$ the differential of $f:\Omega\times\R^d\times\R^{d\times d}$ with respect to the second and third component, respectively. Besides, $\b u_\epsilon$ solves the state equations $\eqref{e:StatNSStrong}$ corresponding to
$\varphi_\epsilon$ in the weak sense and $\lambda_\epsilon\in\R$ is a Lagrange multiplier for the
integral constraint. Additionally, $\pi_\epsilon\in L^2(\Omega)$ can as in Remark~\ref{r:PressureStateEquations} be obtained as pressure associated to the adjoint system.
\end{theorem}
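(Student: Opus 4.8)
The plan is to derive the optimality system by making the formal Lagrangian computation rigorous. The formal derivation sketched just before the theorem indicates exactly the two ingredients: the variational inequality from differentiating $\mathcal L_\epsilon$ with respect to $\varphi$, and the adjoint equation from differentiating with respect to $\b u$. Since the state equation is a constraint coupling $\varphi$ and $\b u$, the cleanest route is the reduced-functional approach: under the smallness assumption $\|\nabla\b u_\epsilon\|_{\b L^2(\Omega)}<\mu/K_\Omega$, Lemma~\ref{l:StateEquationsWellDefined} gives a \emph{unique} state $\b u=\b S(\varphi)$ solving \eqref{e:StatNSWeak}, so I would define the reduced objective $j_\epsilon(\varphi):=J_\epsilon(\varphi,\b S(\varphi))$ and show that a minimizer satisfies $\mathrm Dj_\epsilon(\varphi_\epsilon)(\varphi-\varphi_\epsilon)\geq0$ for all admissible $\varphi$.

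First I would establish differentiability of the control-to-state map $\b S$ near $\varphi_\epsilon$. The smallness condition makes the linearized state operator an isomorphism (its bilinear form is coercive on $\b V$ exactly when $\|\nabla\b u_\epsilon\|$ stays below the threshold, by the same estimate that yields uniqueness), so the implicit function theorem applied to the weak state equation \eqref{e:StatNSWeak}, viewed as a map $\b H^1\times\b U\to(\b V)^*$, yields that $\b S$ is Fr\'echet differentiable with derivative $\b u':=\b S'(\varphi_\epsilon)h$ characterized by the linearized equation
\begin{align*}
  \int_\Omega\alpha_\epsilon(\varphi_\epsilon)\b u'\cdot\b v
  +\mu\nabla\b u'\cdot\nabla\b v
  +(\b u'\cdot\nabla)\b u_\epsilon\cdot\b v
  +(\b u_\epsilon\cdot\nabla)\b u'\cdot\b v\dx
  =-\int_\Omega\alpha_\epsilon'(\varphi_\epsilon)h\,\b u_\epsilon\cdot\b v\dx
\end{align*}
for all $\b v\in\b V$. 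Here one must take care that the directional increment $h=\varphi-\varphi_\epsilon$ keeps $\b u'$ in the right space; differentiating the constraint $\b u=\b0$ outside the fluid region is not an issue in the phase field formulation since $\alpha_\epsilon$ is smooth on $[-1,1]$, which is precisely why the diffuse interface model is analytically convenient.

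Next I would introduce the adjoint state $\b q_\epsilon\in\b V$ as the unique solution of \eqref{e:AdjointSystem}, whose existence and uniqueness again follow from the coercivity of the \emph{adjoint} bilinear form under the same smallness condition (the transposed convection terms $(\nabla\b u_\epsilon)^T\b q_\epsilon-(\b u_\epsilon\cdot\nabla)\b q_\epsilon$ are the formal adjoints of the linearization above). Testing the adjoint equation with $\b u'$ and the linearized state equation with $\b q_\epsilon$, the bilinear state terms cancel, and this identity lets me replace all $\b u'$-dependence in $\mathrm Dj_\epsilon(\varphi_\epsilon)h$ by the explicit right-hand side of \eqref{e:AdjointSystem}. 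Computing $\mathrm Dj_\epsilon(\varphi_\epsilon)h$ directly from \eqref{e:ObjFctl} contributes the terms $\tfrac12\alpha_\epsilon'(\varphi_\epsilon)|\b u_\epsilon|^2$, the Ginzburg--Landau terms $\gamma\epsilon\nabla\varphi_\epsilon\cdot\nabla h$ and $\tfrac\gamma\epsilon\psi_0'(\varphi_\epsilon)h$, plus the objective derivative $\der_2f\cdot\b u'+\der_3f\cdot\der\b u'$; the adjoint identity converts the last, $\b u'$-dependent piece into $-\alpha_\epsilon'(\varphi_\epsilon)\b u_\epsilon\cdot\b q_\epsilon\,h$, reproducing exactly \eqref{e:VariationalInequality}. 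The Lagrange multiplier $\lambda_\epsilon$ enters through the affine integral constraint $\int_\Omega\varphi=\beta|\Omega|$ in $\Phi_{ad}$: testing against the full convex set $\overline\Phi_{ad}$ (which drops only that integral constraint) produces $\lambda_\epsilon$ as the multiplier, by a standard separation argument for the box constraint $|\varphi|\le1$.

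I expect the main obstacle to be the differentiability of $\b S$ together with the justification that the pointwise potential $\psi_0$ and the interpolation $\alpha_\epsilon$ generate no extra terms: the obstacle part of $\psi$ is handled by optimizing over $\overline\Phi_{ad}$, so only the smooth part $\psi_0'$ survives in the inequality, while the nonsmoothness of the feasible set is absorbed into the variational inequality rather than an equation. A subtler point is verifying that $\der_2f$ and $\der_3f$, appearing as the adjoint source, lie in the correct dual spaces so that $\b q_\epsilon\in\b V$; this requires the Carath\'eodory growth bounds from the Remark after Assumption~\ref{a:ObjectiveFctl} to control the objective derivatives in $\b L^2$ and $(\b H^1)^*$. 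Since the statement explicitly defers the full argument to \cite{hecht}, the sketch above is the natural line I would follow, with the coercivity induced by the smallness condition as the recurring technical backbone.
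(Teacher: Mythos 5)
Your proposal is correct and takes essentially the same approach as the paper: the paper derives the optimality system formally by differentiating the Lagrangian $\mathcal L_\epsilon$ with respect to $\varphi$ and $\b u$, and defers the rigorous argument to \cite{hecht}. Your reduced-functional sketch---differentiability of the control-to-state map via the coercivity induced by $\left\|\nabla\b u_\epsilon\right\|_{\b L^2(\Omega)}<\nicefrac{\mu}{K_\Omega}$, well-posedness of the adjoint Oseen system under the same condition, the duality cancellation eliminating the linearized state, and a multiplier $\lambda_\epsilon$ for the integral constraint---is precisely the standard rigorous implementation of that formal Lagrangian computation.
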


Under certain assumptions on the objective functional it can be verified that a minimizer $(\varphi_\epsilon,\b u_\epsilon)$ of
\eqref{e:ObjFctl}--\eqref{e:StatNSStrong}
fulfills $\left\|\nabla\b u_\epsilon\right\|_{\b L^2(\Omega)}<\nicefrac{\mu}{K_\Omega}$. This implies by Lemma~\ref{l:StateEquationsWellDefined} that $\b u_\epsilon$ is the only solution of \eqref{e:StatNSStrong} corresponding to $\varphi_\epsilon$, see \cite{hecht}. In particular, for minimizing the total potential power,
see \eqref{e:TotalPotPower},
this condition is equivalent to stating ``smallness of data or high viscosity''
as can be found in classical literature.
For details and the proof of Theorem \ref{t:OptimalitySystem}
we refer the reader to \cite{hecht}.

Hence it is not too restrictive to assume from now on that in a neighborhood of the minimizer $\varphi_\epsilon$ 
the state equations \eqref{e:StatNSStrong} are uniquely solvable, 
such that we can introduce the reduced cost functional $j_\epsilon(\varphi):=J_\epsilon(\varphi,\b u)$ where $\b u$ 
is the solution to \eqref{e:StatNSStrong} corresponding to $\varphi$. 
The optimization problem \eqref{e:ObjFctl}--\eqref{e:StatNSStrong} is then equivalent to 
$\min_{\varphi\in\Phi_{ad}}j_\epsilon(\varphi)$.

Following \cite{HintermuellerHinzeTber__AFEM_for_CH}, 
we consider a Moreau--Yosida relaxation 
of this optimization problem 
\begin{align}\tag{$\hat P_\infty$}
  \min_{\varphi\in \Phi_{ad}} j_\epsilon(\varphi)
\end{align}
in which the primitive constraints $|\varphi| \leq 1$ $a.e.$ in $\Omega$ are replaced (relaxed)
through an additional quadratic penalization term in the cost functional.
The optimization problem then reads
\begin{align}\tag{$\hat P_\hp$}
  \min_{\varphi\in H^1(\Omega), \int_\Omega\varphi\dx=\beta|\Omega|} j_\epsilon^s(\varphi),
\end{align}
where
\begin{align}
  \label{e:ObjFctlRelaxed}
  j_\epsilon^s(\varphi)
  :=j_\epsilon(\varphi)+\frac s2\int_\Omega\left|\max\left(0,\varphi-1\right)\right|^2\dx
  +\frac s2\int_\Omega\left|\min\left(0,\varphi+1\right)\right|^2\dx.
\end{align}
Here, $s \gg 1$ plays the role of the penalization parameter. 
The associated Lagrangian $\mathcal L_\epsilon^s$ reads then correspondingly
\begin{equation}
  \begin{split}
    \mathcal L_\epsilon^s\left(\varphi,\b u,\b q\right)&:=
    J_\epsilon(\varphi,\b u)+\frac s2\int_\Omega\left|\max\left(0,\varphi-1\right)\right|^2\dx+\frac s2\int_\Omega\left|\min\left(0,\varphi+1\right)\right|^2\dx\\
    &-\int_\Omega\alpha_\epsilon\left(\varphi\right)\b u\cdot\b q+\mu\nabla\b u\cdot\nabla\b q+\left(\b u\cdot\nabla\right)\b u\cdot\b q-\b f\cdot\b q\dx.
  \end{split}
\end{equation}
Similar analysis as above yields the gradient equation
\begin{equation}\label{e:VariationalEqualityS}
  \begin{split}
     &\mathrm D_\varphi\mathcal L_\epsilon^s\left(\varphi_\epsilon,\b u_\epsilon,\b q_\epsilon\right)\varphi
     =\left(\frac12\alpha'_\epsilon\left(\varphi_\epsilon\right)\left|\b u_\epsilon\right|^2+\frac\gamma\epsilon\psi'_0\left(\varphi_\epsilon\right)-\alpha'_\epsilon\left(\varphi_\epsilon\right)\b u_\epsilon\cdot\b q_\epsilon+\lambda_s(\varphi_\epsilon),\varphi\right)_{L^2(\Omega)}\\
		&+\left(\gamma\epsilon\nabla\varphi_\epsilon,\nabla\varphi\right)_{\b L^2(\Omega)}=0,
  \end{split}
\end{equation}
which has to hold for all $\varphi\in H^1(\Omega)$ with 
$\int_\Omega\varphi\dx=0$. 
Here we use
$\lambda_s(\varphi_\epsilon)=\lambda_s^+(\varphi_\epsilon)+\lambda_s^-(\varphi_\epsilon)$ 
with $\lambda_s^+(\varphi_\epsilon):=s\max\left(0,\varphi_\epsilon-1\right)$ 
and $\lambda_s^-(\varphi_\epsilon):=s\min\left(0,\varphi_\epsilon+1\right)$, 
and $\b q_\epsilon\in\b V$ is the adjoint state given as weak solution of \eqref{e:AdjointSystem}. 
The functions $\lambda_s^+(\varphi_\epsilon)$ and $\lambda_s^-(\varphi_\epsilon)$ 
can also be interpreted as approximations of  Lagrange multipliers for the pointwise constraints
$\varphi\leq1$ a.e.
in $\Omega$ and $\varphi\geq-1$ a.e. in $\Omega$, respectively.

It can be shown, that the sequence of minimizers
$\left(\varphi_\epsilon,\b u_\epsilon\right)_{\epsilon>0}$
of \eqref{e:ObjFctl}--\eqref{e:StatNSStrong} has a subsequence that converges in
$L^1(\Omega)\times\b H^1(\Omega)$ as $\epsilon\searrow 0$.
If the sequence $\left(\varphi_\epsilon\right)_{\epsilon>0}$
converges of order $\mathcal O\left(\epsilon\right)$ one obtains
that the limit element actually is a minimizer of
\eqref{e:ObjFctlSharp}--\eqref{e:StatNSSharpStrong}.
In these particular cases, one can additionally prove that the first order optimality conditions given by
Theorem \ref{t:OptimalitySystem} are an approximation of the classical shape derivatives
for the shape optimization problem \eqref{e:ObjFctlSharp}--\eqref{e:StatNSSharpStrong}.
For details we refer the reader to \cite{hecht}.

\begin{remark}
The same analysis and considerations can be carried out in a Stokes flow.
For the typical example of minimizing the total potential power
\eqref{e:TotalPotPower} it can then even be shown,
that the reduced objective functional corresponding to the phase field formulation
$\Gamma$-converges in $L^1(\Omega)$ to the reduced objective functional
of the sharp interface formulation.
Moreover, the first order optimality conditions are much  simpler
since no adjoint system is necessary any more.
For details we refer to \cite{hecht}.
\end{remark}

\section{Numerical solution techniques}
To solve the phase field problem \eqref{e:ObjFctl}--\eqref{e:StatNSStrong} numerically, we use a
steepest descent approach. For this purpose, we assume as above that in a neighborhood
of the minimizer $\varphi_\epsilon$ the state equations \eqref{e:StatNSStrong} are uniquely solvable,
and hence the reduced cost functional
$j_\epsilon(\varphi):=J_\epsilon(\varphi,\b u)$, with $\b u$  the solution to
\eqref{e:StatNSStrong} corresponding to $\varphi$, is well-defined. 
In addition, we introduce an artificial time variable $t$.
Our aim consists in finding a stationary point in $\Phi_{ad}$ of the following gradient flow:
\begin{align}\label{e:Gradientflow}
  \left\langle\partial_t\varphi,\zeta\right\rangle
  =-\mathrm{grad} j_\epsilon^s(\varphi)(\zeta)
  =-\mathrm Dj_\epsilon^s(\varphi)(\zeta)\quad\forall\zeta\in H^1(\Omega), \int_\Omega\zeta\dx=0,
\end{align}
with some inner product $\left\langle\cdot,\cdot\right\rangle$, 
where $j_\epsilon^s$ is the Moreau--Yosida relaxed cost functional defined in \eqref{e:ObjFctlRelaxed}. 
This flow then decreases the cost functional $j_\epsilon^s$.\\
Now a stationary point $\varphi_\epsilon\in\Phi_{ad}$ of this flow fulfills
the necessary optimality condition \eqref{e:VariationalEqualityS}.
Obviously, the resulting equation depends on the choice of the inner product.
Here, we choose an $H^{-1}$-inner product which is defined as
\begin{align*}
  \left(v_1,v_2\right)_{H^{-1}(\Omega)}:=
  \int_\Omega\nabla\left(-\Delta\right)^{-1}v_1\cdot\nabla\left(-\Delta\right)^{-1}v_2\dx,
\end{align*}
where $y=(-\Delta)^{-1}v$ for $v\in \left(H^1(\Omega)\right)^\star$ with $\left< v,1\right>=0$ is the weak solution of $-\Delta y=v$ in $\Omega$, $\partial_\nu y=0$ on $\partial\Omega$.
The gradient flow \eqref{e:Gradientflow} with this particular choice of
$\left\langle\cdot,\cdot\right\rangle=\left(\cdot,\cdot\right)_{H^{-1}(\Omega)}$
reads as follows:
\begin{align*}
  \partial_t\varphi&=\Delta w&&\quad\text{in }\Omega,\\
  \left(-w,\xi\right)_{L^2(\Omega)}&=-\mathrm Dj_\epsilon^s(\varphi)(\xi) &&\quad\forall\xi\in
  H^1(\Omega), \int_\Omega\xi\dx=0,
\end{align*}
together with homogeneous Neumann boundary conditions on $\partial\Omega$ for $\varphi$ and $w$.
The resulting problem can be considered as a generalised Cahn--Hilliard system.
It follows from direct calculations that this flow preserves the mass,
i.e. $\int_\Omega\varphi(t,x)\dx=\int_\Omega\varphi(0,x)\dx$ for all $t$.
In particular, no Lagrange multiplier for the integral constraint is needed any more.
After fixing some initial condition $\varphi_0\in H^1(\Omega)$ such that
$\left|\varphi_0\right|\leq1$ a.e. and $\int_\Omega\varphi_0\dx=\beta\left|\Omega\right|$,
and some final time $T>0$ this results in the following problem:\\

\noindent\parbox{13cm}{
\noindent\underline{\textit{Cahn--Hilliard System:}}

\medskip

\noindent	Find sufficiently regular $\left(\varphi,w,\b u\right)$
such that
\begin{subequations}\label{eq:MY:CahnHilliard}
\begin{align}
  \partial_t\varphi&=\Delta w&&\text{in }\Omega\times(0,T),\label{eq:MY:CahnHilliard:first}\\
  -\gamma\epsilon\Delta\varphi  +\lambda_\hp(\varphi) + \frac\gamma\epsilon\psi'_0(\varphi)
  +  \alpha'_\epsilon(\varphi)
  \left(
  \frac12 \left|\b u\right|^2-\b u\cdot\b q
  \right)
  &=w &&\text{in }\Omega\times(0,T),\label{eq:MY:CahnHilliard:second} \\
  \varphi(0)&=\varphi_0&&\text{in }\Omega,\\
  \partial_\nu\varphi=0,\, \partial_\nu w&=0 &&\text{on }\partial\Omega\times\left(0,T\right),
\end{align}
\end{subequations}
where $\b u(t)$ fulfills the state equations \eqref{e:StatNSStrong} corresponding to $\varphi(t)$,
and $\b q(t)$ is the adjoint variable defined by \eqref{e:AdjointSystem}.}

\medskip

\subsection{Numerical implementation}
For a numerical realization of the gradient flow method for finding (locally) optimal topologies
we discretize the systems \eqref{e:StatNSStrong}, \eqref{e:AdjointSystem} and
\eqref{eq:MY:CahnHilliard} in time and space.

For this let $0 = t_0< t_1 <\ldots<t_k<t_{k+1}<\ldots$ denote a time grid with
step sizes $\tau_k = t_{k}-t_{k-1}$. For ease of presentation we use a fixed step size and thus set
$\tau_k \equiv \tau$, but we note, that in our numerical implementation $\tau$ is adapted to the
gradient flow in direction $\nabla w$, see Section \ref{ssec:num:TimeStepLength}.

Next a discretization in space  using the finite element method is performed.
For this let $\mathcal{T}^{k}$ denote a conforming triangulation of $\Omega$ with closed simplices
$T\subset \overline \Omega$.
For simplicity we assume that $\overline \Omega$ is exactly represented by $\mathcal{T}^{k}$, 
i.e. $\overline \Omega = \bigcup_{T\in \mathcal{T}^k}T$.
The set of faces of $\mathcal{T}^{k}$ we denote by $\mathcal{E}^{k}$, while the set of nodes
we denote by $\mathcal{N}^{k}$.
For each simplex $T\in \mathcal{T}^k$ 
we denote its diameter by $h_T$, 
and for each face $E\in\mathcal{E}^k$ 
its diameter by $h_E$.
We introduce
the finite element spaces
\begin{align*} 
  \mathcal{V}^1(\mathcal{T}^{k}) &=
  \{v\in C(\overline\Omega)\,|\,v|_T \in P_1(T), \,\forall  T\in \mathcal{T}^{k}\},\\
  \b{\mathcal{V}}^2_{\b g_h}(\mathcal{T}^{k}) &=
  \{v\in C(\overline\Omega)^d\,|\,v|_T \in P_2(T)^d, \,\forall  T\in \mathcal{T}^{k},\,
  v|_{\partial\Omega} = \b g_h\}, 
\end{align*}
where $P_k(T)$ denotes the set of all polynomials up to order $k$ defined on the triangle $T$. The
boundary data $\b v|_{\partial\Omega} = \b g$ is incorporated by a suitable approximation $\b g_h$
of $\b g$ on the  finite element mesh. 

Now at time instance $t_k$ we by
$\b u_h \in \b{\mathcal{V}}^2_{\b g_h}(\mathcal{T}^{k+1})$
denote the fully discrete variant of $\b u$ and by $\b q_h\in\b{\mathcal{V}}^2_0(\mathcal{T}^{k+1})$ 
the fully discrete variant of $\b q$.
Accordingly we proceed with the discrete variants
$\varphi_h, w_h,p_h,\pi_h \in \mathcal{V}^1(\mathcal{T}^{k})$ of $\varphi,w,p$, and $\pi$, where
$\int_\Omega p_h\dx =  \int_\Omega \pi_h\dx = 0$ is required.

Let $\b q^k$ and $\varphi^k$  denote the adjoint velocity and the phase field variable from
the  time step $t_k$, respectively. 
At time instance $t_{k+1}$ we consider
\begin{subequations}\label{eq:FE:NavierStokes}
\begin{align}
  \alpha_\epsilon(\varphi^k)\b u_h - \mu \Delta \b u_h
  + \left(\b u_h \cdot \nabla\right) \b u_h + \nabla p_h &= \b f,\label{eq:FE:NavierStokes1}\\
  \div \b u_h &= 0,\label{eq:FE:NavierStokes2}
\end{align}
\end{subequations}
\begin{subequations}\label{eq:FE:Adjoint}
\begin{align}
  \alpha_\epsilon(\varphi^k)\b q_h- \mu \Delta \b q_h
  - \left(\b u_h \cdot \nabla\right) \b q_h + \nabla \pi_h &=
  \alpha_\epsilon(\varphi^k)\b u_h + \der_2f(\cdot,\b u_h,\der\b u_h)\\
	&-\div\der_3f\left(\cdot,\b u_h,\der\b u_h\right) - \left(\nabla \b  u_h\right)^T\b q^k,\\
  \div \b q_h &= 0, 
\end{align}
\end{subequations}
\begin{subequations}\label{eq:FE:CahnHilliard}
\begin{align}
  \tau^{-1}(\varphi_h -\varphi^k) - \Delta w_h &= 0,\label{eq:FE:CahnHilliard:first}\\
  -\gamma\epsilon \Delta \varphi_h
  + \lambda_\hp(\varphi_h) 
  + \frac{\gamma}{\epsilon}\psi_0'(\varphi^k)
  +\alpha_\epsilon'(\varphi_h)\left(\frac{1}{2}|\b u_h|^2 - \b u_h\cdot \b q_h\right)
  &=  w_h,\label{eq:FE:CahnHilliard:second}
\end{align}
\end{subequations}
as discrete counterpart to \eqref{e:StatNSStrong}, \eqref{e:AdjointSystem} and
\eqref{eq:MY:CahnHilliard}, respectively.
 
The weak form of \eqref{eq:FE:CahnHilliard} using 
$\psi_0'(\varphi^k) = -\varphi^k$ reads
\begin{subequations}\label{eq:FE:CahnHilliard_weak}
\begin{align}
  F^1((\varphi_h,w_h),v) &=\tau^{-1}(\varphi_h -\varphi^k,v)_{L^2(\Omega)} + \left(\nabla w_h,\nabla v\right)_{\b L^2(\Omega)}  =0,
  &\hspace{-1cm} \forall  v\in \mathcal{V}^1(\mathcal{T}^{k+1}),
  \label{eq:FE:CahnHilliard_weak:first}\\
  F^2((\varphi_h,w_h),v) &= \gamma\epsilon (\nabla \varphi_h,\nabla v)_{\b L^2(\Omega)}
  + (\lambda_\hp(\varphi_h),v)_{L^2(\Omega)} - \frac{\gamma}{\epsilon}(\varphi^k,v)_{L^2(\Omega)}\nonumber\\
  &\hspace{-2cm}+\left(\alpha_\epsilon'(\varphi_h)\left(\frac{1}{2}|\b u_h|^2 - \b u_h\cdot \b q_h\right),v\right)_{L^2(\Omega)}
  -(w_h,v)_{L^2(\Omega)} = 0, & \hspace{-1cm}\forall  v \in  \mathcal{V}^1(\mathcal{T}^{k+1}). 
  \label{eq:FE:CahnHilliard_weak:second}
\end{align}
\end{subequations}

The time discretization is chosen to obtain a sequential coupling of the three equations of
interest. Namely to obtain the phase field on time instance $t_{k+1}$ we first solve
\eqref{eq:FE:NavierStokes} for $\b u_h$ using the phase field $\varphi^k$ from the previous time
step. With $\b u_h$ and $\varphi^k$ at hand we then solve \eqref{eq:FE:Adjoint} to obtain the
adjoint velocity $\b q_h$ which then together with $\b u_h$ is used to obtain a new phase field 
$\varphi^{k+1}$ from \eqref{eq:FE:CahnHilliard}.

\begin{remark}
It follows from the structure of \eqref{eq:FE:NavierStokes}--\eqref{eq:FE:CahnHilliard}, that
$\varphi_h$ and $\b u_h,\b q_h$ could be discretized on different spatial grids. In the numerical
part we for simplicity use one grid for all variables involved.
\end{remark}

To justify the discretization \eqref{eq:FE:NavierStokes}--\eqref{eq:FE:CahnHilliard} we state the
following assumptions.
\begin{list}{\theAssCount}{\usecounter{AssCount}}
  \setcounter{AssCount}{\value{AssListCount}}
  \item \label{a:alphaBounded}
  The interpolation function $\alpha_\epsilon:[-1,1] \to [0,\overline {\alpha_\epsilon}]$ is extended
  to $\tilde\alpha_\epsilon:\mathbb{R}\to \mathbb{R}$ fulfilling Assumption \ref{a:Alpha}, so that
  there exists $0\leq\delta<\infty$ such that $\tilde\alpha_\epsilon(\varphi)\geq -\delta$ for all
  $\varphi\in\mathbb{R}$, with $\delta$  sufficiently small. For convenience we in the following
  do not distinguish $\alpha_\epsilon$ and $\tilde\alpha_\epsilon$.
  \setcounter{AssListCount}{\value{AssCount}}
\end{list}
\begin{list}{\theAssCount}{\usecounter{AssCount}}
  \setcounter{AssCount}{\value{AssListCount}}
  \item \label{a:uumuq}
  For given $\varphi^k\in\mathcal{V}^1(\mathcal{T}^k)$ let $\b u_h$ denote the solution to
  \eqref{eq:FE:NavierStokes} and $\b q_h$ denote the corresponding solution to \eqref{eq:FE:Adjoint}.
  Then there holds
  \begin{align*}
    \frac12|\b u_h|^2 - \b u_h\cdot \b q_h \geq 0.
  \end{align*}
  \setcounter{AssListCount}{\value{AssCount}}
\end{list}
\begin{list}{\theAssCount}{\usecounter{AssCount}}\setcounter{AssCount}{\value{AssListCount}}
  \item \label{a:alphaConvex}
 	Additional to Assumption \ref{a:Alpha}, we assume that $\alpha_\epsilon$ is convex. 	 
  \setcounter{AssListCount}{\value{AssCount}}
\end{list}

\begin{remark}
Assumption \ref{a:alphaBounded} is required to ensure existence of unique solutions to
\eqref{eq:FE:NavierStokes} and \eqref{eq:FE:Adjoint} if $\delta$ is sufficiently small.
   
Assumption \ref{a:uumuq} is fulfilled in our numerics for small Reynolds numbers but can not be
justified analytically. 
This assumption might be neglected if $\alpha_\epsilon'$ is discretized explicitly in time in
\eqref{eq:FE:CahnHilliard:second}. 
Due to the large values that  $\alpha_\epsilon'$ takes, we
expect a less robust behaviour of the numerical solution process if we discretize
$\alpha_\epsilon'$ explicitly in time.   
   
Using Assumption \ref{a:alphaBounded} and Assumption \ref{a:uumuq} the existence of a unique
solution to \eqref{eq:FE:CahnHilliard} follows from \cite{HintermuellerHinzeTber__AFEM_for_CH}.

For a general $\alpha_\epsilon$ one can use a splitting $\alpha_\epsilon = \alpha_\epsilon^+ +
\alpha_\epsilon^-$ where $\alpha_\epsilon^+$ denotes the convex part of $\alpha_\epsilon$ and
$\alpha_\epsilon^-$ denotes the concave part. Then $\alpha_\epsilon^+$ is discretized implicitly in
time as in \eqref{eq:FE:CahnHilliard:second}, and $\alpha_\epsilon^-$ is discretized explicitly in
time to obtain a stable discretization, see e.g.
\cite{eyre_CH_semi_implicite,GarckeHinzeKahle_CHNS_AGG_linearStableTimeDisc}.

\end{remark}

The system \eqref{eq:FE:NavierStokes} is solved by an Oseen iteration, where at step $j+1$ of the
iteration the transport $\b u_h^j$ in the nonlinear term $(\b u_h^j \cdot \nabla)\b u_h^{j+1}$ is
kept fix and the resulting linear Oseen equation is solved for $(\b u_h^{j+1},p_h^{j+1})$.
The existence of solutions to the Oseen equations for solving \eqref{eq:FE:NavierStokes} and the
Oseen equation \eqref{eq:FE:Adjoint} are obtained from
\cite[Th. II 1.1]{GiraultRaviart_FEM_for_NavierStokes} using Assumption \ref{a:alphaBounded}.

In \eqref{eq:FE:Adjoint} we use the adjoint variable from the old time instance for
discretizing $\left(\nabla \b u\right)^T \b q$ in time.
In this way \eqref{eq:FE:Adjoint} yields a discretized Oseen equation for which efficient
preconditioning techniques are available.

As mentioned above, the nonlinearity in system \eqref{eq:FE:NavierStokes} is solved by an Oseen
fixed-point iteration. The resulting linear systems are solved by a preconditioned gmres
iteration, see \cite{Saad_gmres}.
The restart is performed depending on the parameter $\mu$ and yields a restart after 10 to 40 iterations.
The employed preconditioner is of upper triangular type, see e.g.
\cite{Benzi_numericalSaddlePoint}, including the $F_p$ preconditioner from
\cite{kayLoghinWelford_FpPreconditioner}. 
The  block arising from the momentum equation \eqref{eq:FE:NavierStokes1} is
inverted using umfpack \cite{UMFPACK}. Since \eqref{eq:FE:Adjoint} is an Oseen equation the same procedure is used for
solving for $\b q_h$. 

The gradient equation \eqref{eq:FE:CahnHilliard} is solved by Newton's method,
see \cite{HintermuellerHinzeTber__AFEM_for_CH} for details in the case of the pure Cahn--Hilliard
equation.
For applying Newton's method to \eqref{eq:FE:CahnHilliard} Assumption \ref{a:uumuq} turns out to be numerically essential.
The linear systems appearing in Newton's method are solved directly using umfpack \cite{UMFPACK}.
Here we also refer to \cite{BoschStollBenner_fastSolutionCH}
concerning iterative solvers and preconditioners for the solution
of the Cahn--Hilliard  equation with Moreau--Yosida relaxation.

The simulation of the gradient flow is stopped as soon as $\|\nabla
w_h\|_{L^2(\Omega)}\leq tol_{abs} + tol_{rel}\|w^0\|_{L^2(\Omega)}$ holds.
Typically we use $tol_{abs} = 10^{-6}$ and $tol_{rel}=10^{-12}$.

\subsubsection{The adaptive concept}\label{ssec:adaptConcept}
For resolving the interface which separates the fluid and the porous material we adapt the adaptive concept
provided in \cite{HintermuellerHinzeKahle_AFEM_for_CHNS,HintermuellerHinzeTber__AFEM_for_CH} to the
present situation.
We base the concept only upon the gradient flow structure, thus the
Cahn--Hilliard equation, and derive a posteriori error estimates up to higher order terms for the
approximation of $\nabla\varphi$ and $\nabla w$.

\medskip

\noindent We define the following errors and residuals:
\begin{align*}
  e_\varphi &= \varphi_h-\varphi,
  & e_w &= w_h-w,\\
  r_h^{(1)} &= \varphi_h-\varphi^k,
  & r_h^{(2)} &=   \alpha_\epsilon^\prime(\varphi_h)
  \left(\frac{1}{2}|\b u_h|^2-\b u_h\cdot\b q_h\right)
  +\lambda_\hp(\varphi_h) - \frac{\gamma}{\epsilon}\varphi^k-w_h,\\
  \eta_{T_E}^{(1)} &= 
  \sum_{E\subset T}\!\!h_E^{1/2}\|\!\left[\!\nabla w_h\!\right]_E\! \|_{\b
  L^2(E)}, & \eta_{T_E}^{(2)} &= 
  \sum_{E\subset T}\!\!h_E^{1/2}\|\!\left[\!\nabla  \varphi_h\!\right]_E\!\|_{\b L^2(E)}, \\
  \eta_N^{(1)} &= h_N^2\|r_h^{(1)}-R_N^{(1)}\|_{L^2(\omega_N)}^2,
  & \eta_N^{(2)} &= h_N^2\|r_h^{(2)}-R_N^{(2)}\|_{L^2(\omega_N)}^2.
\end{align*}
The values  $\eta_N^{(i)},\, i=1,2$ are node-wise error values, while
$\eta_{T_E}^{(i)},\, i=1,2$ are edgewise error contributions, where for each triangle $T$ the
contributions over all edges of $T$ are summed up.
For a node $N \in \mathcal{N}^{k+1}$ we by
$\omega_N$  denote the support of the piecewise linear basis function
located at $N$ and set $h_N := \mbox{diam}(\omega_N)$. 
The value $R_N^{(i)} \in
\mathbb{R},\, i=1,2$ can be chosen arbitrarily. Later they represent appropriate means.
By $[\cdot]_E$ we denote the jump across the face $E$ in normal 
direction $\nu_E$ pointing from simplex with smaller global number to simplex with larger global
number. $\nu_E$ denotes the outer normal at $\Omega$ of $E\subset \partial \Omega$.

To obtain a residual based error estimator we follow the construction in
\cite[Sec. 7.1]{HintermuellerHinzeTber__AFEM_for_CH}. We further use
\cite[Cor. 3.1]{Carstensen_QuasiInterpolation} to obtain lower bounds for the terms $\eta_N^{(1)}$
and $\eta_N^{(2)}$.
For convenience of the reader we state \cite[Cor. 3.1]{Carstensen_QuasiInterpolation} here.
\begin{theorem}[{\cite[Cor. 3.1]{Carstensen_QuasiInterpolation}}]
\label{cor:CarstensenInterpolation}
There exists a constant $C>0$ depending on the domain $\Omega$ and on the regularity of the
triangulation $\mathcal T$ such that
\begin{align*}
  \int_\Omega R(u-\mathcal I u)\,\mathrm dx
  + &\int_\mathcal{E}J(u-\mathcal I u)\ds\\
  &\leq
  C\|\nabla u\|_{\b L^p(\Omega)}
  \left(
  \sum_{N\in \mathcal N}h_N^q\|R-R_N\|_{L^p(\omega_N)}^q
  + \sum_{T\in \mathcal T}h_T\|J\|_{L^q(\mathcal{E}\cap \partial T)}^q
  \right)^{1/q}
\end{align*}
holds for all $J\in L^q(\mathcal E)$, $R\in L^q(\Omega)$, $u\in W^{1,p}(\Omega)$,
and arbitrary $R_N\in \mathbb R$ for $N\in \mathcal N$, 
where  $1<p,q<\infty$
satisfy $\frac1p+\frac1q=1$.
\end{theorem}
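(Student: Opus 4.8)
The plan is to reconstruct the classical argument for a Cl\'ement/Carstensen-type quasi-interpolation operator. First I would fix the concrete form of $\mathcal I$: for each node $N\in\mathcal N$ let $\varphi_N\in\mathcal V^1(\mathcal T)$ denote the associated nodal basis function and set $\mathcal Iu:=\sum_{N\in\mathcal N}(u)_N\varphi_N$, where $(u)_N$ is the $\varphi_N$-weighted mean $(u)_N:=\big(\int_{\omega_N}u\varphi_N\dx\big)\big/\big(\int_{\omega_N}\varphi_N\dx\big)$. Two structural facts drive everything: the partition of unity $\sum_N\varphi_N\equiv1$, which yields the pointwise identity $u-\mathcal Iu=\sum_N(u-(u)_N)\varphi_N$; and the orthogonality $\int_{\omega_N}(u-(u)_N)\varphi_N\dx=0$, which is precisely what permits replacing $R$ by $R-R_N$ on each star for an arbitrary constant $R_N$.

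Next I would treat the volume term. Using the identity above and inserting the constants,
\[
  \int_\Omega R(u-\mathcal Iu)\dx=\sum_N\int_{\omega_N}(R-R_N)\,(u-(u)_N)\,\varphi_N\dx .
\]
Applying H\"older with the conjugate pair $(q,p)$ on each star and using $0\le\varphi_N\le1$ bounds the $N$-th term by $\|R-R_N\|_{L^q(\omega_N)}\|u-(u)_N\|_{L^p(\omega_N)}$. The scaled Poincar\'e inequality $\|u-(u)_N\|_{L^p(\omega_N)}\le Ch_N\|\nabla u\|_{\b L^p(\omega_N)}$, valid on $\omega_N$ with a constant depending only on the shape regularity of $\mathcal T$, then gives the summand $Ch_N\|R-R_N\|_{L^q(\omega_N)}\|\nabla u\|_{\b L^p(\omega_N)}$. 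A discrete H\"older inequality over $\mathcal N$ separates the two factors into $\big(\sum_N h_N^q\|R-R_N\|_{L^q(\omega_N)}^q\big)^{1/q}$ and $\big(\sum_N\|\nabla u\|_{\b L^p(\omega_N)}^p\big)^{1/p}$, and the finite-overlap property of the stars collapses the second factor to $C\|\nabla u\|_{\b L^p(\Omega)}$, yielding the claimed volume bound.

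For the face term I would argue analogously but insert a scaled trace inequality. On a face $E\subset\partial T$, H\"older gives $\int_E J(u-\mathcal Iu)\ds\le\|J\|_{L^q(E)}\|u-\mathcal Iu\|_{L^p(E)}$, and the multiplicative trace estimate $\|v\|_{L^p(E)}^p\le C\big(h_E^{-1}\|v\|_{L^p(T)}^p+h_E^{p-1}\|\nabla v\|_{\b L^p(T)}^p\big)$ applied to $v=u-\mathcal Iu$, together with the elementwise approximation bound $\|u-\mathcal Iu\|_{L^p(T)}\le Ch_T\|\nabla u\|_{\b L^p(\tilde\omega_T)}$ and the $W^{1,p}$-stability $\|\nabla(u-\mathcal Iu)\|_{\b L^p(T)}\le C\|\nabla u\|_{\b L^p(\tilde\omega_T)}$, converts this into $Ch_T^{1/q}\|J\|_{L^q(E)}\|\nabla u\|_{\b L^p(\tilde\omega_T)}$; here $h_E\sim h_T$ and the exponent identity $\tfrac{p-1}{p}=\tfrac1q$ produce exactly the weight $h_T$ rather than $h_T^{p-1}$. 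Summing over faces and again invoking discrete H\"older and finite overlap yields the face contribution with weight $\sum_T h_T\|J\|_{L^q(\mathcal E\cap\partial T)}^q$, and adding the two estimates gives the theorem.

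I expect the main obstacle to be the careful bookkeeping of the local estimates with the correct $h$-scalings, in particular the interplay of the trace inequality, the local approximation estimate and the identity $\tfrac{p-1}{p}=\tfrac1q$ that matches the weight $h_T$ appearing on the right-hand side, together with the treatment of boundary stars, where the definition of $(u)_N$ and the Poincar\'e constant must be adjusted so that both the orthogonality and the approximation property survive. The remaining steps, namely the partition of unity, the two applications of H\"older and the finite-overlap counting, are routine once these local ingredients are secured.
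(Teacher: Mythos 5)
Your proof is correct, but note that the paper itself does not prove this statement: it is quoted verbatim as Theorem~\ref{cor:CarstensenInterpolation} from the cited reference (Carstensen, Cor.~3.1), so the only meaningful comparison is with the argument in that reference --- and your reconstruction is essentially that argument. The ingredients you assemble (weighted nodal averages $(u)_N$, partition of unity giving $u-\mathcal Iu=\sum_N(u-(u)_N)\varphi_N$, the orthogonality $\int_{\omega_N}(u-(u)_N)\varphi_N\dx=0$ which licenses the insertion of arbitrary constants $R_N$, local Poincar\'e and scaled trace estimates, elementwise approximation and stability, then discrete H\"older plus finite overlap of the stars) are exactly the mechanism behind the cited corollary, with the exponent bookkeeping $\frac{p-1}{p}=\frac1q$ handled correctly in the face term. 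One discrepancy is worth flagging: your argument (correctly) produces the factor $\|R-R_N\|_{L^q(\omega_N)}$, i.e.\ the norm dual to that of $\nabla u$, whereas the statement as transcribed in the paper reads $\|R-R_N\|_{L^p(\omega_N)}$; since $R\in L^q(\Omega)$, the $L^p$ norm there appears to be a typo in the transcription, and the two coincide in the only case the paper uses, namely $p=q=2$. Finally, your cautionary remark about boundary stars is moot in this setting: the operator $\mathcal I$ here carries no boundary conditions, so the weighted mean $(u)_N$ may be used at every node and both the orthogonality and the local Poincar\'e inequality hold uniformly for interior and boundary stars alike; such adjustments only become necessary for variants of $\mathcal I$ that enforce homogeneous traces.
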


Here $\mathcal I:L^1(\Omega) \to  \mathcal{V}^{\mathcal T}$ denotes a modification of
the Cl\'ement interpolation operator proposed in
\cite{Carstensen_QuasiInterpolation,CarstensenVerfuerth_EdgeResidualDominate}.
In \cite{CarstensenVerfuerth_EdgeResidualDominate} it is shown, that in general the error
contributions  arising from the jumps of the gradient of the discrete objects dominate the error
contributions arising from triangle wise residuals.
In our situation it is therefore sufficient to use
the error indicators $\eta_{T_E}^{(i)}$, $i=1,2$, in an adaptation  scheme to obtain well resolved
meshes.
let us assume that $R\in H^1(\Omega)$ in Corollary \ref{cor:CarstensenInterpolation}. 
Then with
$R_N = \int_{\omega_N} R\dx$ we obtain $\|R-R_N\|_{L^2(\Omega)}\leq C(\omega_N)\|\nabla R\|_{\b
L^2(\Omega)}$, and $C(\omega_N)\leq \mbox{diam}(\omega_N)\pi^{-1}$, cf. \cite{Payne_PoincareConstant}.

Since the construction of the estimator is  standard we here only briefly describe the procedure.
We use the errors  $e_w$ and $e_\varphi$ as test functions in 
 \eqref{eq:FE:CahnHilliard_weak:first} and \eqref{eq:FE:CahnHilliard_weak:second}, respectively.
Since $e_w,e_\varphi\in H^1(\Omega)$ they are valid test functions 
in  \eqref{eq:MY:CahnHilliard}. Subtracting 
\eqref{eq:FE:CahnHilliard_weak:first} and the weak form of \eqref{eq:MY:CahnHilliard:first},
tested by $e_w$, as well as subtracting  \eqref{eq:FE:CahnHilliard_weak:second} and the
weak form of \eqref{eq:MY:CahnHilliard:second}, tested by $e_\varphi$ and adding the resulting
equations yields
\begin{align*}
  &\tau \|\nabla e_w\|_{\b L^2(\Omega)}^2 + \gamma\epsilon\|\nabla e_\varphi\|^2_{\b L^2(\Omega)}\\
   &+ \left(\lambda_\hp(\varphi_h)- \lambda_\hp(\varphi),e_\varphi\right)_{L^2(\Omega)}
  + \left( \left[\alpha_\epsilon^\prime(\varphi_h)-\alpha_\epsilon^\prime(\varphi) \right]
  \left(\frac{1}{2}|\b u_h|^2 - \b u_h \cdot \b q_h\right),e_\varphi \right)_{L^2(\Omega)}\\
  &\leq F^{(1)}((\varphi_h,w_h),e_w) + F^{(2)}((\varphi_h,w_h),e_\varphi)\\
  &+\left(
  \alpha_\epsilon^\prime(\varphi)
  \left[
  \left(\frac{1}{2}|\b u|^2 -  \b u \cdot \b q\right) -
  \left(\frac12|\b u_h|^2 -  \b u_h \cdot \b q_h\right)
  \right] 
  ,e_\varphi\right)_{L^2(\Omega)}.
\end{align*} 
For convenience we investigate the term $F^{(1)}((\varphi_h,w_h),e_w)$.
Since $\mathcal I e_w \in \mathcal{V}^1(\mathcal T^{k+1})$ it is a valid
test function for   \eqref{eq:FE:CahnHilliard_weak:first}.
We obtain
\begin{align*}
  F^{(1)}((\varphi_h,w_h),e_w) &= F^{(1)}((\varphi_h,w_h),e_w-\mathcal{I}e_w)\\
  &=\tau^{-1}\int_\Omega (\varphi_h-\varphi^k)(e_w-\mathcal{I}e_w)\dx 
  + \int_\Omega \nabla  w_h\cdot \nabla (e_w-\mathcal{I}e_w)\dx\\
  &+ \tau^{-1}\int_\Omega r_h^{(1)}(e_w-\mathcal{I}e_w)\,\mathrm dx
  +  \sum_{E\subset \mathcal E}\int_E \left[ \nabla w_h \right]_E (e_w-\mathcal{I}e_w)\ds.
\end{align*}
Applying Corollary \ref{cor:CarstensenInterpolation} now gives
\begin{align*}
  F^{(1)}&((\varphi_h,w_h),e_w)\\
  &\leq
  C\|\nabla e_w\|_{\b L^2(\Omega)}
  \left(
  \tau^{-2}\sum_{N\in\mathcal{N}} h_N^2\|r_h^{(1)}\|^2_{L^2(\omega_N)}
  +\sum_{T\in \mathcal T} h_T \|\left[ \nabla w_h \right]_E\|_{L^2(\partial T)}^2
  \right)^{1/2}.
\end{align*}
For $F^{(2)}((\varphi_h,w_h),e_\varphi)$ a similar result holds. Using Young's inequality we obtain
the following theorem.

\begin{theorem}  
There exists a constant $C>0$ independent of $\tau,\gamma,\epsilon,\hp$ and 
$h:=\max_{T\in\mathcal{T}}h_T$ such that there holds:
\begin{align*}
  &\tau \|\nabla e_w\|^2_{\b L^2(\Omega)} + \gamma\epsilon\|\nabla e_\varphi\|^2\\
  & + \left(\lambda_\hp(\varphi_h)- \lambda_\hp(\varphi),e_\varphi\right)_{L^2(\Omega)}
  + \left( \left[\alpha_\epsilon^\prime(\varphi_h)-\alpha_\epsilon^\prime(\varphi) \right]
  (\frac{1}{2}|\b u_h|^2 - \b u_h \cdot \b q_h),e_\varphi \right)_{L^2(\Omega)}\\
  &\leq C\left(\eta_\Omega^2 + \eta_{h.o.t.}^2\right),
\end{align*}
where 
\begin{align*}
  \eta_\Omega^2 :=
  \frac{1}{\tau}\sum_{N\in \mathcal{N}^{k+1}} \left(\eta_N^{(1)}\right)^2
  +\frac{1}{\gamma\epsilon}\sum_{N\in \mathcal{N}^{k+1}} \left(\eta_N^{(2)}\right)^2
  +\tau\sum_{T\in \mathcal{T}^{k+1}} \left(\eta_E^{(1)}\right)^2
  +\gamma\epsilon\sum_{T\in \mathcal{T}^{k+1}} \left(\eta_E^{(2)}\right)^2,
\end{align*}
and 
\begin{align*}
  \eta_{h.o.t.}^2 :=&
  \frac{1}{\gamma\epsilon}\sum_T \left\|\alpha_\epsilon^\prime(\varphi)
  \left( \left( \frac{1}{2}|\b u_h|^2-\b u_h\cdot\b q_h\right) - \left(\frac{1}{2}|\b u|^2 - \b
  u\cdot \b q\right) \right)\right\|_{L^2(T)}^2.
\end{align*}
\end{theorem}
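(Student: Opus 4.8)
The plan is to start from the error inequality derived just above, whose left-hand side is precisely the four quantities appearing in the claimed estimate and whose right-hand side is $F^{(1)}((\varphi_h,w_h),e_w)+F^{(2)}((\varphi_h,w_h),e_\varphi)$ together with the remaining ``continuous'' term $\big(\alpha_\epsilon'(\varphi)[(\frac12|\b u|^2-\b u\cdot\b q)-(\frac12|\b u_h|^2-\b u_h\cdot\b q_h)],e_\varphi\big)_{L^2(\Omega)}$. The two monotone terms on the left, the $\lambda_\hp$-difference and the $\alpha_\epsilon'$-difference paired with $e_\varphi$, are simply carried along: by monotonicity of $\lambda_\hp$ and, for the second, by convexity of $\alpha_\epsilon$ (Assumption~\ref{a:alphaConvex}) together with $\frac12|\b u_h|^2-\b u_h\cdot\b q_h\geq0$ (Assumption~\ref{a:uumuq}), both are in fact nonnegative, which is what makes the final bound a genuine control of the gradient errors rather than a vacuous inequality. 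Thus the whole task reduces to estimating the three right-hand terms and absorbing the factors $\|\nabla e_w\|_{\b L^2(\Omega)}$ and $\|\nabla e_\varphi\|_{\b L^2(\Omega)}$ they generate back into the coercive terms $\tau\|\nabla e_w\|^2$ and $\gamma\epsilon\|\nabla e_\varphi\|^2$ on the left.

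The term $F^{(1)}$ has already been handled above, so I would only recall the mechanism and repeat it for $F^{(2)}$. In both cases one uses that $\mathcal I e_w$, respectively $\mathcal I e_\varphi$, is an admissible discrete test function (Galerkin orthogonality), then integrates the gradient pairing by parts element by element: since $w_h$ and $\varphi_h$ are piecewise linear their interior Laplacians vanish, so that only the normal face jumps $[\nabla w_h]_E$, respectively $[\nabla\varphi_h]_E$, survive alongside the volume residuals $r_h^{(1)}$ and $r_h^{(2)}$. Applying Corollary~\ref{cor:CarstensenInterpolation} with $p=q=2$, with $R$ the (suitably $\tau$-scaled) volume residual, with $J$ the corresponding normal jump, and with the free constants taken to be the local means $R_N^{(i)}$, then yields $F^{(1)}\leq C\|\nabla e_w\|_{\b L^2(\Omega)}(\cdots)^{1/2}$ and $F^{(2)}\leq C\|\nabla e_\varphi\|_{\b L^2(\Omega)}(\cdots)^{1/2}$, the parentheses containing exactly the node indicators $\eta_N^{(i)}$ and the face-jump indicators $\eta_{T_E}^{(i)}$ (the jumps carrying an extra factor $\gamma\epsilon$ in the $F^{(2)}$-case). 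The key observation is that the Moreau--Yosida multiplier $\lambda_\hp(\varphi_h)$ enters only through the \emph{computable} residual $r_h^{(2)}$; the constant $C$ is the Cl\'ement--Carstensen interpolation constant and therefore depends only on $\Omega$ and on the shape regularity of $\mathcal T$, not on $\hp,\tau,\gamma$ or $\epsilon$.

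The remaining term is of a genuinely different nature: it contains the unknown continuous state $\b u$ and adjoint $\b q$ and admits no discrete orthogonality, hence it cannot be reduced to a computable residual and must land in $\eta_{h.o.t.}^2$. I would estimate it directly by Cauchy--Schwarz, bounding it by $\|\alpha_\epsilon'(\varphi)[(\frac12|\b u|^2-\b u\cdot\b q)-(\frac12|\b u_h|^2-\b u_h\cdot\b q_h)]\|_{L^2(\Omega)}\,\|e_\varphi\|_{L^2(\Omega)}$, and then invoke the Poincar\'e--Wirtinger inequality $\|e_\varphi\|_{L^2(\Omega)}\leq C_\Omega\|\nabla e_\varphi\|_{\b L^2(\Omega)}$. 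This is legitimate because $\int_\Omega e_\varphi\dx=0$: testing \eqref{eq:FE:CahnHilliard_weak:first} with the constant function gives $\int_\Omega\varphi_h\dx=\int_\Omega\varphi^k\dx=\beta|\Omega|$, and the continuous solution of \eqref{eq:MY:CahnHilliard:first} preserves the same mass, so the difference is mean free. The $L^2$-factor that survives is precisely the quantity squared in $\eta_{h.o.t.}^2$.

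Finally I would apply Young's inequality to each of the three products. The delicate point, and the step where the whole estimate stands or falls, is to choose the Young weights proportional to $\tau$ in the $F^{(1)}$-term and to $\gamma\epsilon$ in the $F^{(2)}$- and remainder-terms, so that a small multiple of $\tau\|\nabla e_w\|^2$ and of $\gamma\epsilon\|\nabla e_\varphi\|^2$ is split off and absorbed on the left, while every remaining power of $\tau$ and $\gamma\epsilon$ is pushed into the residual factors. Reading these off reproduces exactly the weights $\frac1\tau,\frac1{\gamma\epsilon},\tau,\gamma\epsilon$ in front of the node and face indicators assembling $\eta_\Omega^2$, and the $\frac1{\gamma\epsilon}$-weighted continuous remainder assembling $\eta_{h.o.t.}^2$. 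Since only the interpolation and Poincar\'e constants enter the prefactors, the resulting $C$ is independent of $\tau,\gamma,\epsilon,\hp$ and $h$, as claimed. I expect the hardest part to be precisely this bookkeeping of the parameter scalings through the Young step: the estimate is only useful if no factor of $\tau$ or $\gamma\epsilon$ is silently absorbed into $C$, and arranging every one of them to land inside $\eta_\Omega^2$ or $\eta_{h.o.t.}^2$ while keeping the absorbed multiple of the coercive terms strictly below one is where care is needed; the residual algebra of the $h$-powers is routine and follows the construction in \cite[Sec.~7.1]{HintermuellerHinzeTber__AFEM_for_CH}.
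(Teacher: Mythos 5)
Your proposal follows essentially the same route as the paper: the error identity obtained by testing the discrete and (time-discretized) continuous equations with $e_w$ and $e_\varphi$, Galerkin orthogonality plus element-wise integration by parts and Corollary~\ref{cor:CarstensenInterpolation} to bound $F^{(1)}$ and $F^{(2)}$ by the node and jump indicators, and a final Young step with $\tau$- and $\gamma\epsilon$-weighted parameters. Your explicit treatment of the continuous remainder term (Cauchy--Schwarz, then Poincar\'e using that $e_\varphi$ is mean-free by mass conservation of both the discrete and the semi-discrete step) correctly fills in a step the paper leaves implicit in assembling $\eta_{h.o.t.}^2$, so the argument is sound and matches the paper's proof.
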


\begin{remark}
\begin{enumerate}
\item Since $\lambda_\hp$ is monotone there holds
$\left(\lambda_\hp(\varphi_h)-\lambda_\hp(\varphi),e_\varphi\right)_{L^2(\Omega)}\geq 0$.
\item We note that due to Assumption \ref{a:uumuq} and
the convexity of $\alpha_\epsilon$ we obtain
$\left( \left[\alpha_\epsilon^\prime(\varphi_h)-\alpha_\epsilon^\prime(\varphi) \right]
(\frac{1}{2}|\b u_h|^2 - \b u_h\cdot \b q_h),e_\varphi \right)_{L^2(\Omega)} \geq 0$.
\item Due to using quadratic elements for both the velocity field $\b u_h$ and the adjoint
velocity field $\b q_h$ we expect that the term $\eta_{h.o.t.}$ can be further estimated with higher
powers of $h$. It therefore is  neglected in our numerical implementation.
\item The values $R_N^{(i)},\,i=1,2$ can be chosen arbitrarily in $\mathbb{R}$. By using
the mean value $R_N^{(i)} = \int_{\omega_N} r_h^{(i)}\,\mathrm dx$ and the Poincar\'e-Friedrichs
inequality together with estimates on the value of its constant (\cite{Payne_PoincareConstant})
the terms $\eta_N^{(i)},\, i=1,2$ are expected to be of higher order and thus are also are neglected
in the numerics.
\item Efficiency of the estimator up to terms of higher order
can be shown along the lines of \cite[Sec. 7.2]{HintermuellerHinzeTber__AFEM_for_CH} by the standard
bubble technique, see e.g. \cite{AinsworthOden_Aposteriori}.
\end{enumerate}
\end{remark}

For the adaptation process we use the
error indicators $\eta_{T_E}^{(1)}$ and $\eta_{T_E}^{(2)}$  in the following  D\"orfler
marking strategy (\cite{Doerfler}) 
as in \cite{HintermuellerHinzeKahle_AFEM_for_CHNS,HintermuellerHinzeTber__AFEM_for_CH}.

\paragraph{The adaptive cycle}
We define the simplex-wise error indicator $\eta_{T_E}$ as
\begin{align*}
  \eta_{T_E} =\eta_{T_E}^{(1)} + \eta_{T_E}^{(2)},
\end{align*}
and the set of admissible simplices
\begin{align*}
  \mathcal{A} = \{ T\in \mathcal{T}^{k+1}\,|\, a_{\min}\leq |T| \leq a_{\max} \},
\end{align*}
where $a_{\min}$ and $a_{\max}$ are the a priori chosen minimal and maximal sizes of simplices.
For adapting the computational mesh we use the following  marking strategy:
\begin{enumerate}
\item Fix constants $\theta^r$ and $\theta^c$ in $(0,1).$
\item Find a set $\mathcal{M}^{E} \subset \mathcal{T}^{k+1}$ such
that
\[
\sum_{T\in\mathcal{M}^{E}} \eta_{T_E} \geq \theta^r
\sum_{T\in\mathcal{T}^{k+1}} \eta_{T_E}.
\]
\item Mark each $T\in (\mathcal{M}^{E} \cap \mathcal{A})$  for refinement.
\item Find the set $\mathcal{C}^E \subset \mathcal{T}^{k+1}$ such
that for each $T \in \mathcal{C}^E$ there holds
\begin{align*}
  \eta_{T_E} & \leq \frac{\theta^c}{N_T}
  \sum_{T\in\mathcal{T}^{k+1}} \eta_{T_E}.
\end{align*}
\item Mark all $T \in \left ( \mathcal{C}^{E}  \cap \mathcal{A}\right )$ for coarsening.
\end{enumerate}
Here  $N_T$ denotes the number of elements of $\mathcal{T}^{k+1}$.

\medskip

We note that by this procedure a simplex can both be marked for refinement and coarsening. In this
case it is refined only. We further note, that we apply this cycle once per time step and then
proceed to the next time instance.


\section{Numerical examples}
In this section we discuss how to choose the values incorporated 
by our porous material -- diffuse interface approach. 

We note that  there are a several approaches on  topology optimization in Navier--Stokes
flow, see e.g.
\cite{borrvall,HansenHaberSigmun_TopoOptChannelFlow,KreisslMaute_fluidTopoOptXFEM,
OlesenOkelsBruus_TopoOptSteadyStateNS,pingen_TopoOpt_Boltzmann}.
On the other hand it seems, that so far no quantitative values to describe the optimal
shapes are available in the literature.
All publications we are aware of  give
qualitative results or quantitative results that seem not to be normalized for comparison with
other codes.

In the following we start with fixing the interpolation function $\alpha_\epsilon$ and the
parameters $\tau$, $\hp$, and $\epsilon$.
We thereafter in Section \ref{ssec:num:TH} investigate how the phase field approach can
find optimal topologies starting from a homogeneously distributed porous material.

In Section \ref{ssec:num:RB} we present numerical experiments for the rugby ball, see als
\cite{borrvall}, \cite{pingen_TopoOpt_Boltzmann} and \cite{Schmidt_shape_derivative_NavierStokes}. 
Here we provide comparison value for the friction drag of the optimized shape, and
as second comparison value we introduce the circularity describing the deviation of the ball from a
circle.

As last example, and as outlook, we address the optimal shape of the embouchure of a bassoon in
Section \ref{ssec:num:FG}.

In the following numerical examples we always assume the absence of external forces,
hence $\b f\equiv\b 0$. 
The optimization aim is always given by minimizing the dissipative energy 
\eqref{e:TotalPotPower},
which in the absence of external forces is given by
\begin{align*}
  F = \int_\Omega \frac{\mu}{2}|\nabla \b u|^2\dx.
\end{align*}
 
The Moreau--Yosida parameter in all our computations is set to $\hp=10^6$. We do not investigate
its couplings to the other parameters involved.
 
For later referencing we here state the parabolic in-/outlet boundary data that we use frequently
throughout  this section
\begin{align}
  \label{eq:num:parabolicBoundary}
  g(x) =
  \begin{cases}
    h\left(1-\left(\frac{x-m}{l/2}\right)^2\right) & \mbox{if } |x-m|<l/2, \\
    0 & \mbox{otherwise}.
  \end{cases}
\end{align}
In the following
this function denotes the normal component of the boundary data at portions of the boundary, where
inhomogeneous Dirichlet boundary conditions are prescribed.
The tangential component is set to zero if not mentioned differently.

\subsection{Time step adaptation}\label{ssec:num:TimeStepLength}
For a faster convergence towards optimal topologies we 
adapt the length of the time steps $\tau^{k+1}$.
Here we use a CFL-like condition to ensure that the interface is not moving too fast into the direction of the flux $\nabla w_h$. 
With
\begin{align*}
  \tau^*= \min_{T\in \mathcal{T}^k} \frac{h_T}{\|\nabla w^k\|_{L^{\infty}(T)}}
\end{align*}
we set
\begin{align*}
  \tau^{k+1} = \max(\tau_{\max},\tau^*),
\end{align*}
where $\tau_{\max}$ denotes an upper bound on the allowed step size and typically is set to
$\tau_{\max} = 10^4$.
Thus the time step size for the current step is calculated using the variable
$w^k$ from the previous time instance. We note that especially for $\nabla w^k \to 0$ we obtain $t_k\to
\infty$, and thus when we approach the final state, we can use arbitrarily large time steps. 
We further note, that if we choose a constant time step the convergence towards a stationary point of the gradient flow in all our
examples is very slow and that indeed large time steps close to the equilibrium are required.

\subsection{The interfacial width}\label{ssec:num:epsilon}
As discussed in Section~\ref{sec:Analysis} the phase field problem
can be verified to approximate the sharp interface shape optimization problem
as $\epsilon\searrow 0$ in a certain sense.
Hence we assume that the phase field problems yield
reasonable approximations of the solution for fixed
but small $\epsilon>0$,
and we do not vary its value.
Typically, in the following we use  the fixed value $\epsilon=0.005$.

\subsection{The interpolation function}\label{ssec:num:alphaepsilon}
We set (see \cite{borrvall})
\begin{align}
  \alpha_\epsilon(\varphi)
  := \frac{\overline \alpha}{2\sqrt{\epsilon}} (1-\varphi)\frac{q}{(\varphi+1+q)},
  \label{eq:num:alpha}
\end{align}
with $\overline  \alpha > 0$ and $q>0$. In our numerics we set $\overline \alpha = 50$. In Figure
\ref{fig:num:alpha} the function $\alpha_\epsilon$ is depicted in dependence of $q$.
We have
$\alpha_\epsilon(-1)=\overline{\alpha}_\epsilon =\overline{\alpha}\epsilon^{-1/2}$ 
and Assumption \ref{a:Alpha} is fulfilled, except that 
$\lim_{\epsilon \searrow 0}\alpha_\epsilon(0) < \infty$ holds. 
Anyhow, the numerical results with this choice of
$\alpha_\epsilon$ are reasonable and we expect that this limit condition has to be posed for
technical reasons only.
\begin{figure}
  \centering
  \epsfig{file=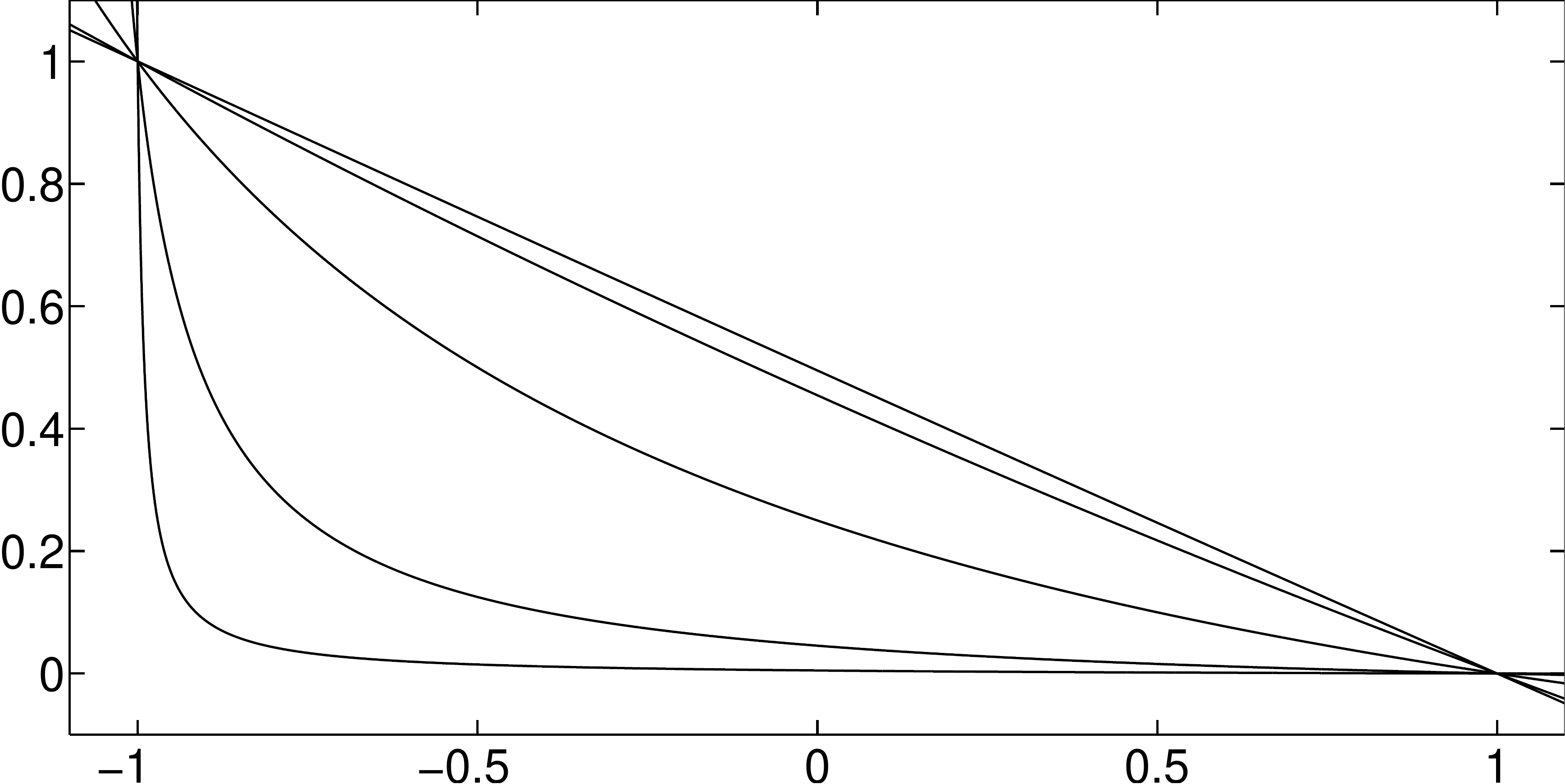, width=6cm,height=3cm}
  \caption{The shape of the  interpolation function $\alpha_\epsilon$ for $q = 10^i, i=-2,\ldots 2$
  (bottom to top).}
  \label{fig:num:alpha}
\end{figure}
To fulfill Assumption~\ref{a:alphaBounded} we  cut
$\alpha_\epsilon$ at  $\varphi\equiv\varphi_c>1$ and use any smooth continuation yielding
$\alpha_\epsilon(\varphi)\equiv \mbox{const}$ for $\varphi\geq \varphi_c$.

The parameter $q$  controls the width of the transition zone between fluid and porous
material.
In \cite{borrvall} the authors typically use a rather small value of $q=0.01$. They also show how
different values of $q$ might lead  to different local optimal topologies.
Since here we also have the parameter $\epsilon$ for controlling the maximal width of the
transition zone we fix $q:=10$.

The fluid material is assumed to be located at $\varphi = 1$ where $\alpha(1)=0$ holds.
Since we use Moreau--Yosida
relaxation we allow $\varphi$ to take values larger then $+1$ and smaller then $-1$.
The choice of $q=10$ and $\hp=10^6$ in our setting always guarantees, that $\varphi+1+q\gg 0$ holds,
and that the violation of $\alpha_\epsilon(\varphi)\geq 0$ at $\varphi=1$ only is small.


Using an interpolation function that yields a smooth transition to zero at $\varphi=1$, say a
polynomial of order 3, in our numerics especially for small values of $\gamma$ yields undesired
behaviour of the numerical solvers.
We for example obtain that fluid regions disappear resulting in a constant porous
material.
The reason is, that then $\alpha_\epsilon(\beta)\approx 0$
if $\beta$ is chosen in the flat region of $\alpha_\epsilon$.
If $\beta$ can be chosen small enough,
the choice of $\varphi\equiv\beta$ yields constant porous material
and hence a very small total potential power.
Thus, $\varphi\equiv\beta$ is at least a local minimizer.

The benefit of small values of $q$ described in \cite{borrvall} stays valid and
for large values of $\gamma$, say $\gamma=1$, small values of $q$ can help finding a valid
topology  when starting from a homogeneous material. This property is the reason
to use this function instead of a linear one,
although $\alpha_\epsilon$ can be regarded as linear for the  value of $q=10$ that we use here.

\subsubsection{The influence of $\overline{\alpha}_\epsilon$ on the interface}
\label{ssec:num:alphaInterface}
The separating effect not only arises from the contribution of the  Ginzburg--Landau energy,
but also the term \linebreak[4]
$\alpha_\epsilon(\varphi)\left(\frac{1}{2}|\b u|^2 - \b u \cdot \b q\right)$ 
yields the demixing of fluid and porous material.
Since $\alpha_\epsilon$ scales with $\overline{\alpha}_\epsilon$ we next
investigate the relative effect of $\overline{\alpha}$ and $\epsilon$
concerning the demixing and thus the width of the resulting interface.
This is done for several values of the parameter $\gamma$, which weights  the two separating forces.

The numerical setup for this test is described in the following.
In the computational domain $\Omega = (0,1)^2$ we have a parabolic inlet at $x\equiv 0$
with $m = 0.5, l=0.2$, and $h=1$. At $x\equiv 1$ we have an outlet with the same values.
The viscosity is set to $\mu = 1$.
We investigate the evolution of the value
\begin{equation*}
  I = \frac{\int_{\{|\varphi|\leq 1\}}\dx}{\int_{\{\varphi=0\}}\ds},
\end{equation*}
which is the size of the area of the transition zone between fluid and material, and which is
normalized by the length of the interface. This value estimates the thickness of the
interfacial region.
For this test we fix $\epsilon\equiv 1$ in \eqref{eq:num:alpha}
and use the interpolation
function 
\begin{equation*}
  \alpha(\varphi) = \alpha_1(\varphi)
  = \frac{\overline \alpha}{2} (1-\varphi)\frac{q}{(\varphi+1+q)},
\end{equation*}
so that $\overline \alpha\equiv \alpha(-1)$.

For fix $\gamma$ we calculate the optimal topology for several combinations of $\epsilon$ and
$\overline{\alpha}$. In Figure \ref{fig:num:atEps} we depict the value of $I$
depending on $\overline{\alpha}$ for several $\epsilon$.
We used $\gamma \in\{0.5,0.05,0.005 \}$ (left to right).

\begin{figure}
  \centering
  \epsfig{file=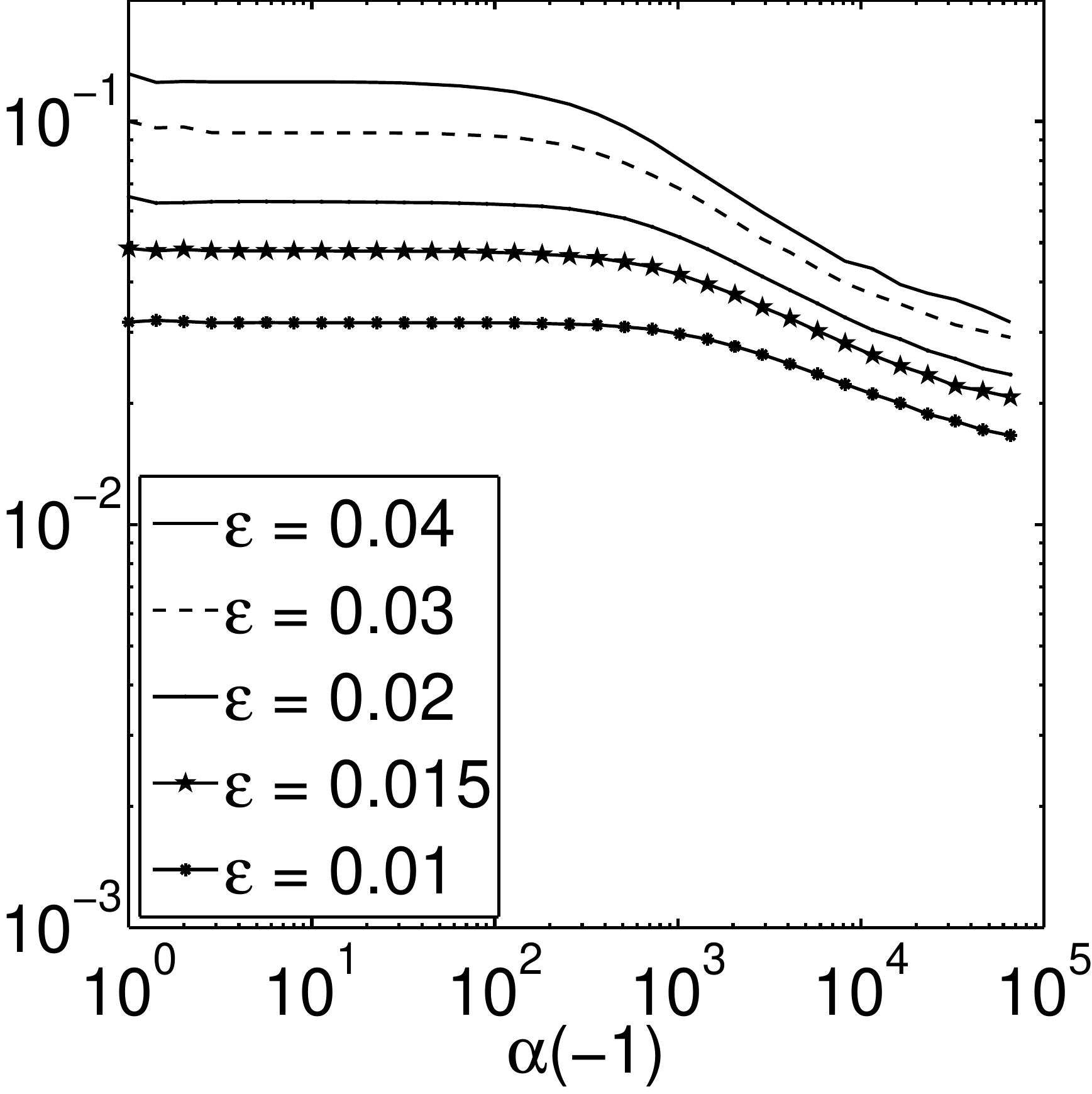,width=0.3\textwidth}\hfill
  \epsfig{file=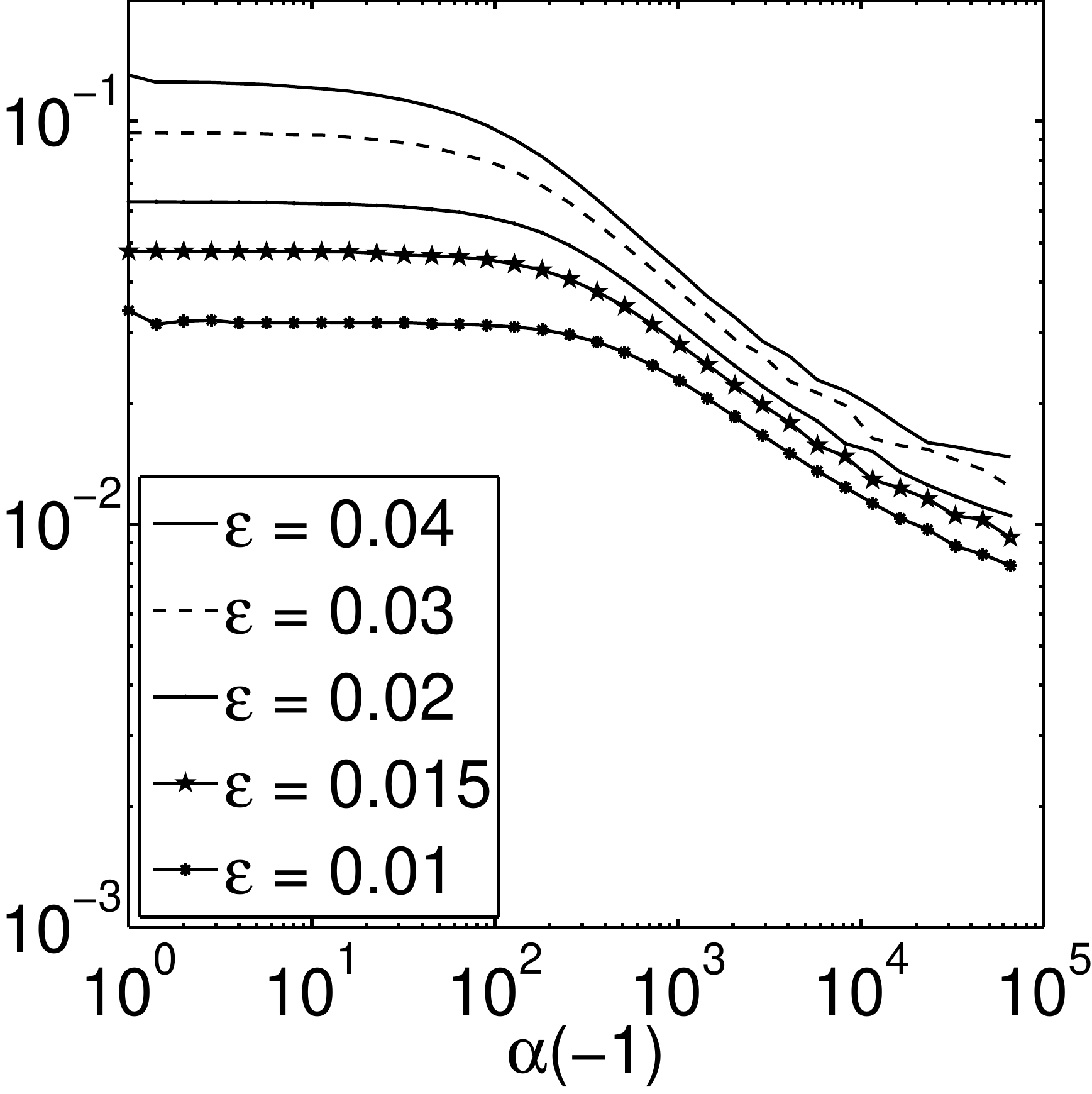,width=0.3\textwidth}\hfill
  \epsfig{file=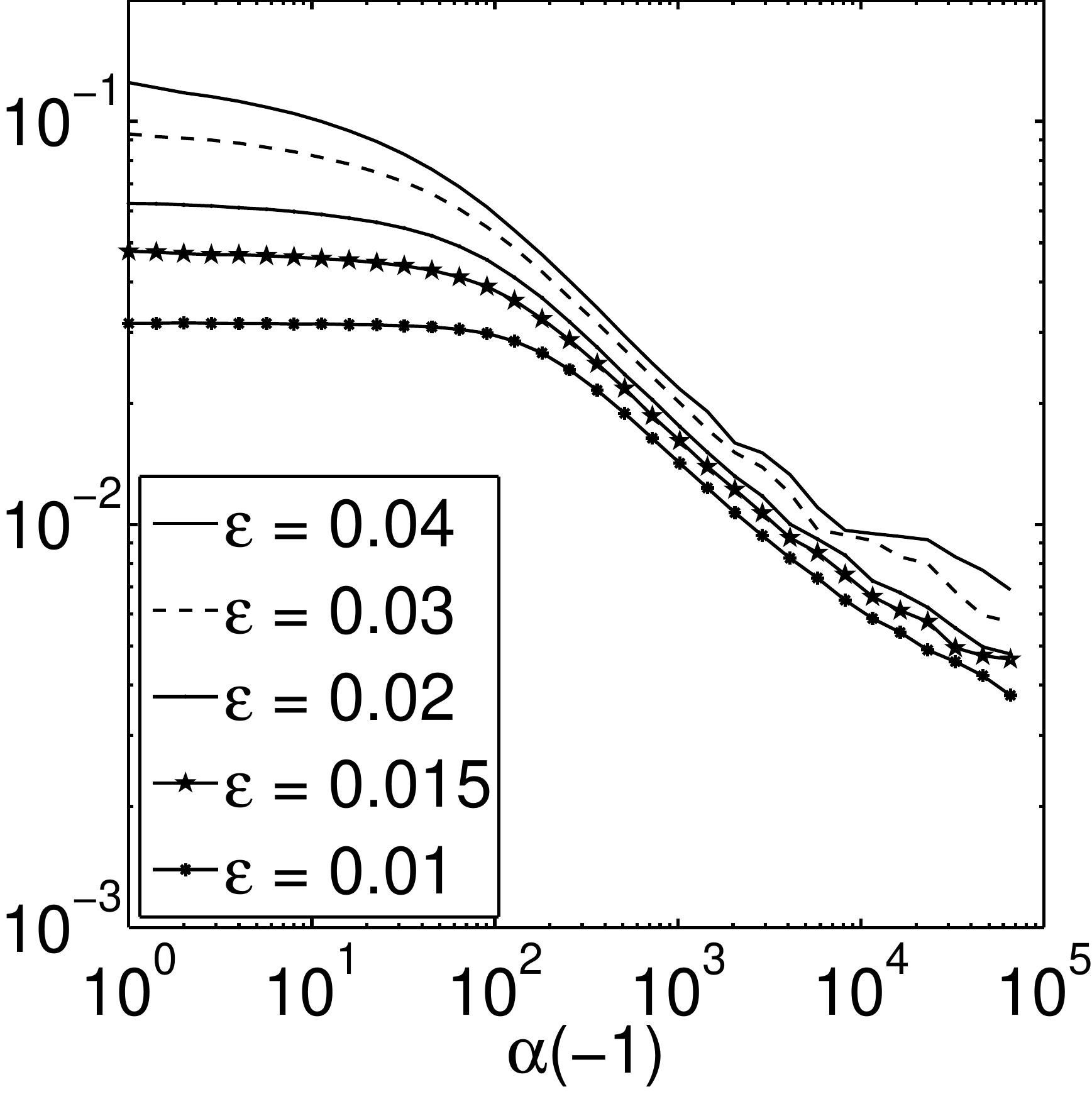,width=0.3\textwidth}
  \caption{Size of the scaled interfacial area for various combinations of 
  $\overline{\alpha} =  \alpha(-1)$ and $\epsilon$, 
  with $\gamma=0.5$ (left), $\gamma=0.05$ (middle) and $\gamma=0.005$  (right).}
  \label{fig:num:atEps}
\end{figure}

We observe that there is a regime of values for $\overline \alpha$ where the interfacial width only
depends on $\epsilon$. But we also see, that,
depending on $\gamma$ and $\epsilon$, there is a
regime where the interfacial width scales like $\overline \alpha^\kappa$ with some $\kappa\in
\mathbb{R}$ which depends on $\gamma$.

The change in the behaviour of the interfacial width occurs at 
$\alpha(-1)\approx C(\gamma)\epsilon^{-1}$,
where $C(\gamma)$ is a constant depending linearly on $\gamma$.
This is exactly the convergence rate necessary to get
analytical convergence results, compare Remark~\ref{r:ConvergenceRateTwoDim} and \cite{hecht}.

We recall that this test is run with constant $\mu=1$ and that the results might differ for
different values of $\mu$. In particular the value of $\mu$ also has an influence on the interfacial
width through the mixing energy $\frac12|\b u_h|^2 - \b u_h \cdot \b q_h$, see Section
\ref{ssec:num:MixingEnergy}.

\subsection{A treelike structure}\label{ssec:num:TH}
In this first example we investigate how our phase field approach is able to find optimal topologies
starting from a homogeneous porous material. This example is similar to an example provided in
\cite{Hansen_Dissertation}.
The setup is as follows. 
The computational domain is $\Omega = (0,1)^2$. On the boundary we have one parabolic inlet as
described in \eqref{eq:num:parabolicBoundary} and four parabolic outlets. The corresponding
parameters are given in Table \ref{tab:num:TH:boundary}.

\begin{table}
\footnotesize
\centering
\begin{tabular}{ccccc}
direction & boundary & m & l & h\\
\hline
inflow  & $\{x\equiv 0\}$ & 0.80 & 0.2 & 3\\
outflow & $\{y\equiv 0\}$ & 0.80 & 0.1 & 1\\
outflow & $\{y\equiv 1\}$ & 0.65 & 0.1 & 1\\
outflow & $\{x\equiv 1\}$ & 0.70 & 0.2 & 1\\
outflow & $\{x\equiv 1\}$ & 0.25 & 0.2 & 1
\end{tabular}
\caption{Boundary data for the treelike structure.}
\label{tab:num:TH:boundary}
\end{table}

We use $\gamma = 0.01$ and $\mu = 0.01$. 
For $\alpha_\epsilon$ we start with $\overline \alpha = 5$ and increase it later.
The phase field is initialized with a homogeneous porous material $\varphi_0 = 0$.
We start with a homogeneous mesh with mesh size $2e-5$ to obtain a first guess of the
optimal topology.
After the material demixes, i.e. $\|\nabla w^h\|_{\b L^2(\Omega)}\leq 2$, we start with adapting the mesh to the
resulting structures using the adaptation procedure described in Section \ref{ssec:adaptConcept}.
For the adaptive process we use the parameter $a_{\min}= 4e-7$, $a_{\max}=0.01$, $\theta^r = 0.1$,
and $\theta^c=0.05$.
As soon as $\|\nabla w^h\|_{\b L^2(\Omega)}\leq 1$ holds we start with increasing
$\overline \alpha$ to $\overline \alpha = 50$ and stop the allover procedure as soon as
$\overline\alpha = 50$ and $\|\nabla w^h\|_{\b L^2(\Omega)}\leq 1e-5$ holds.

In Figure \ref{fig:num:TH:evolution} we depict the temporal evolution of the optimization process.
The images are numbered from top left to bottom right.
Starting from a homogeneous distribution of porous material, we see that the inlet and the outlets
are found after very few time instances and that the main outlets on the right and the inlet 
are connected after only a few more time steps. 
At the bottom left of the computational domain we first obtain finger like structures that
thereafter vanish. 
We note that, due to the porous material approach, not all outlets are connected with the inlet
during the whole computation.
At the final stage of the optimization the evolution slows down and we end 
with the topology depicted at the bottom right after 188 time steps of simulation.

\begin{figure}
  \centering
  \hfill
  \epsfig{file=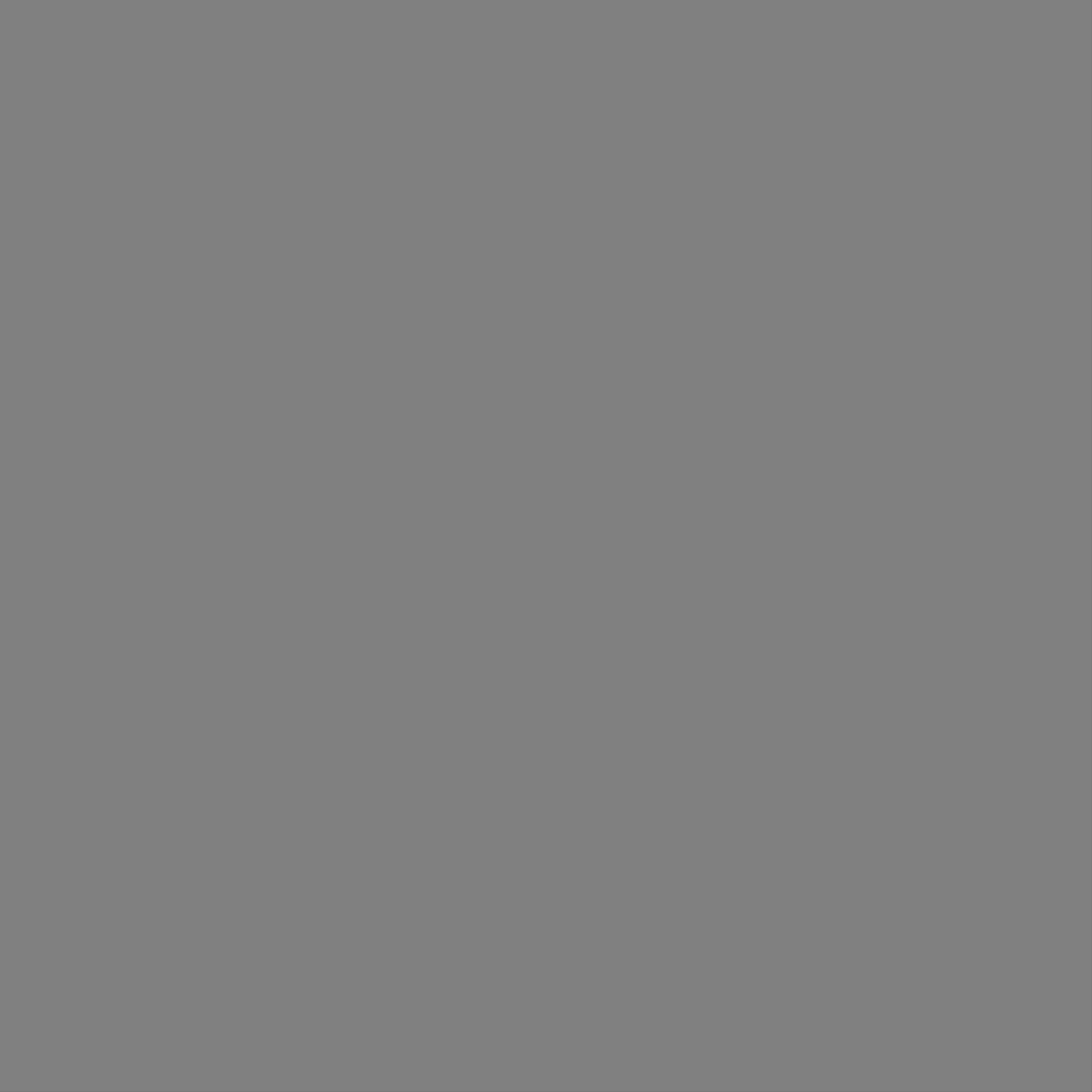,width=0.29\textwidth}
  \hfill
  \epsfig{file=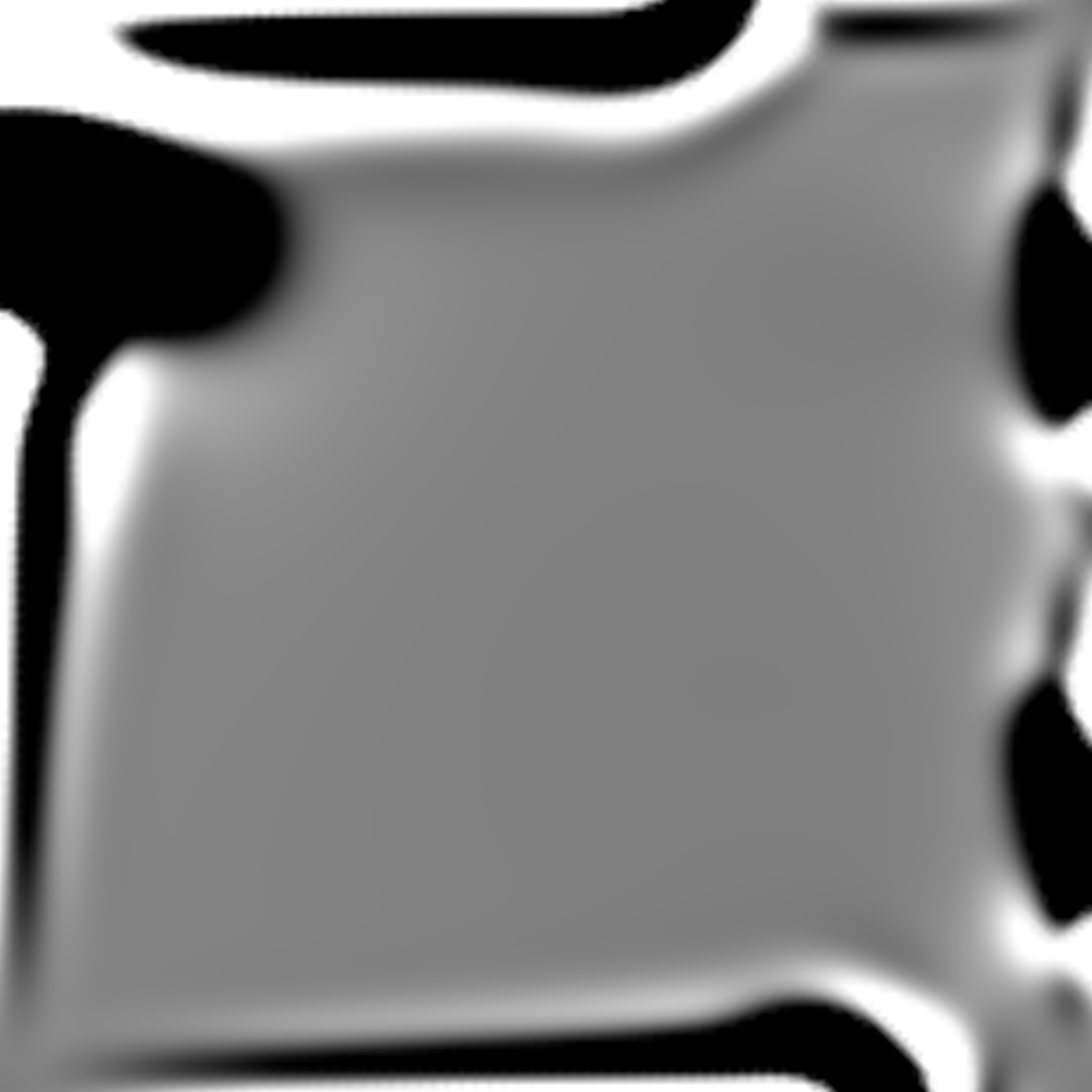,width=0.29\textwidth}
  \hfill
  \epsfig{file=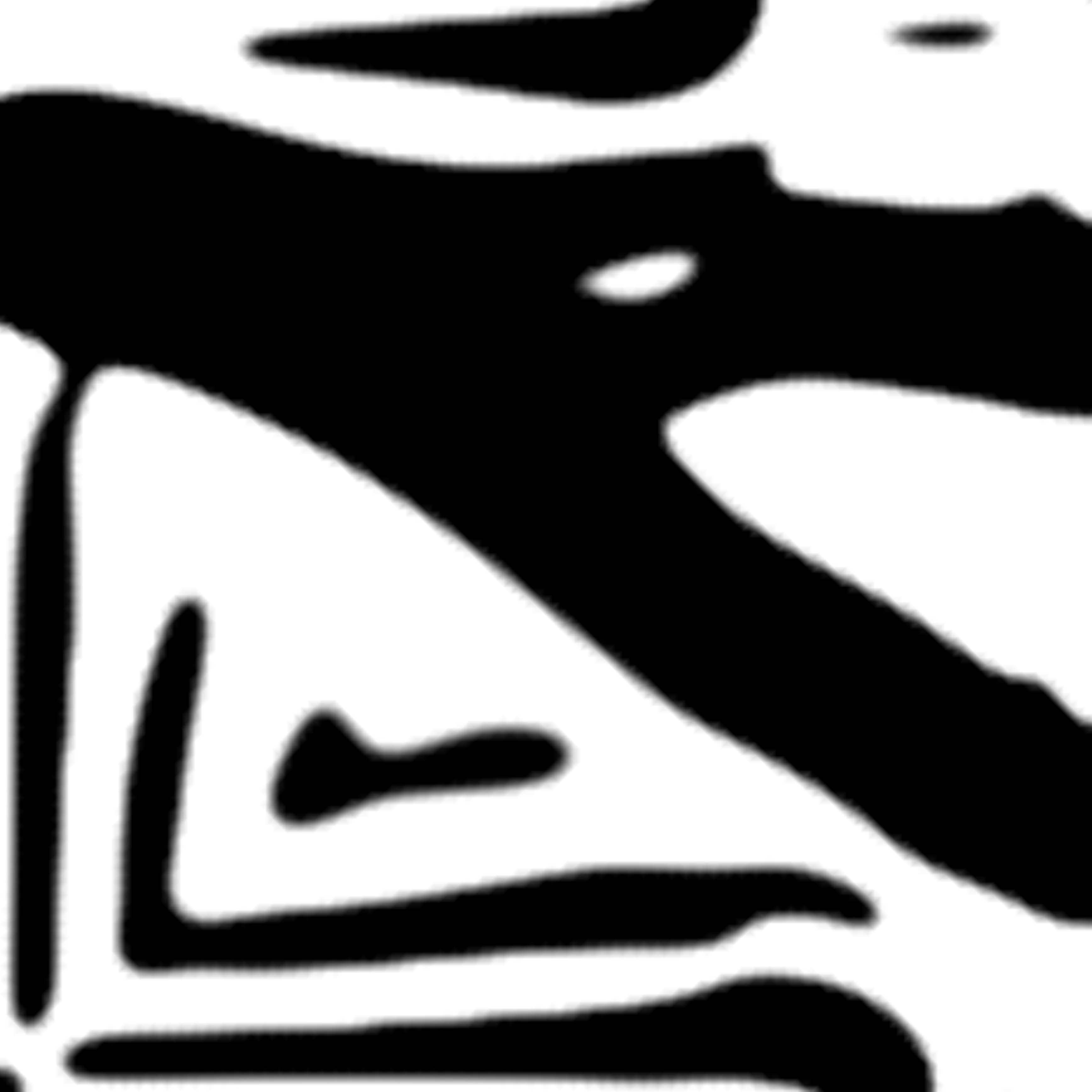,width=0.29\textwidth}
  \hfill\\[2ex]
  \hfill
  \epsfig{file=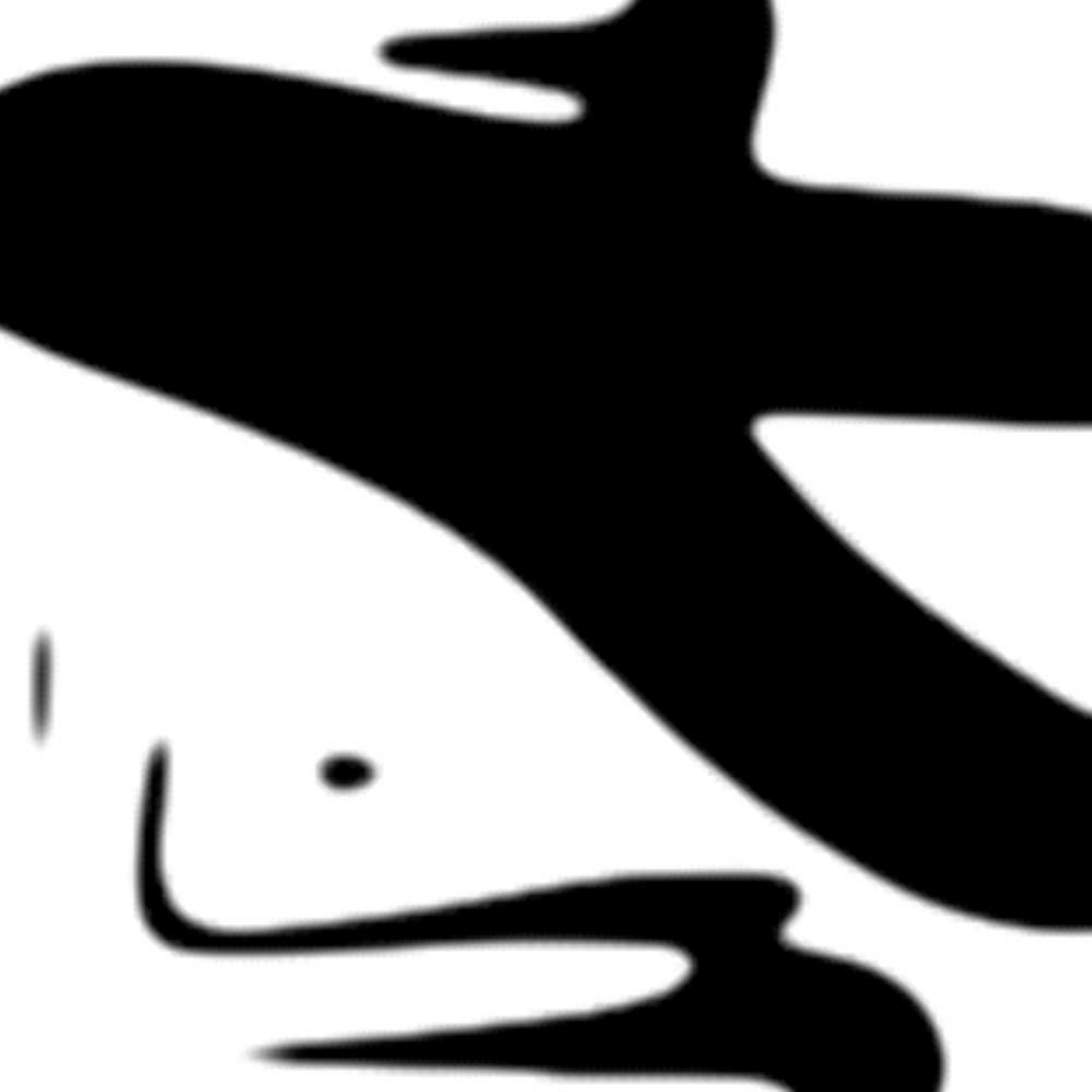,width=0.29\textwidth}
  \hfill
  \epsfig{file=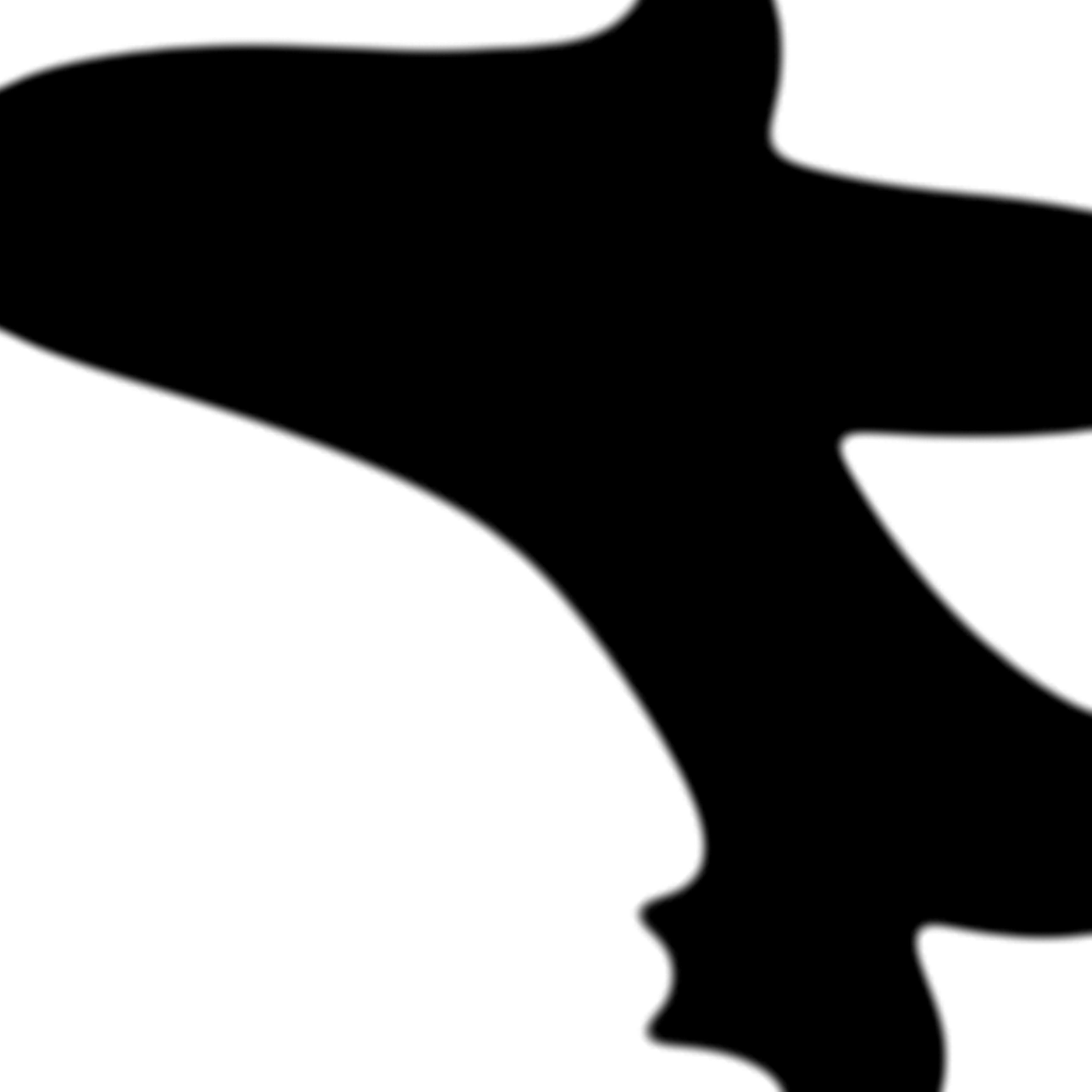,width=0.29\textwidth}
  \hfill
  \epsfig{file=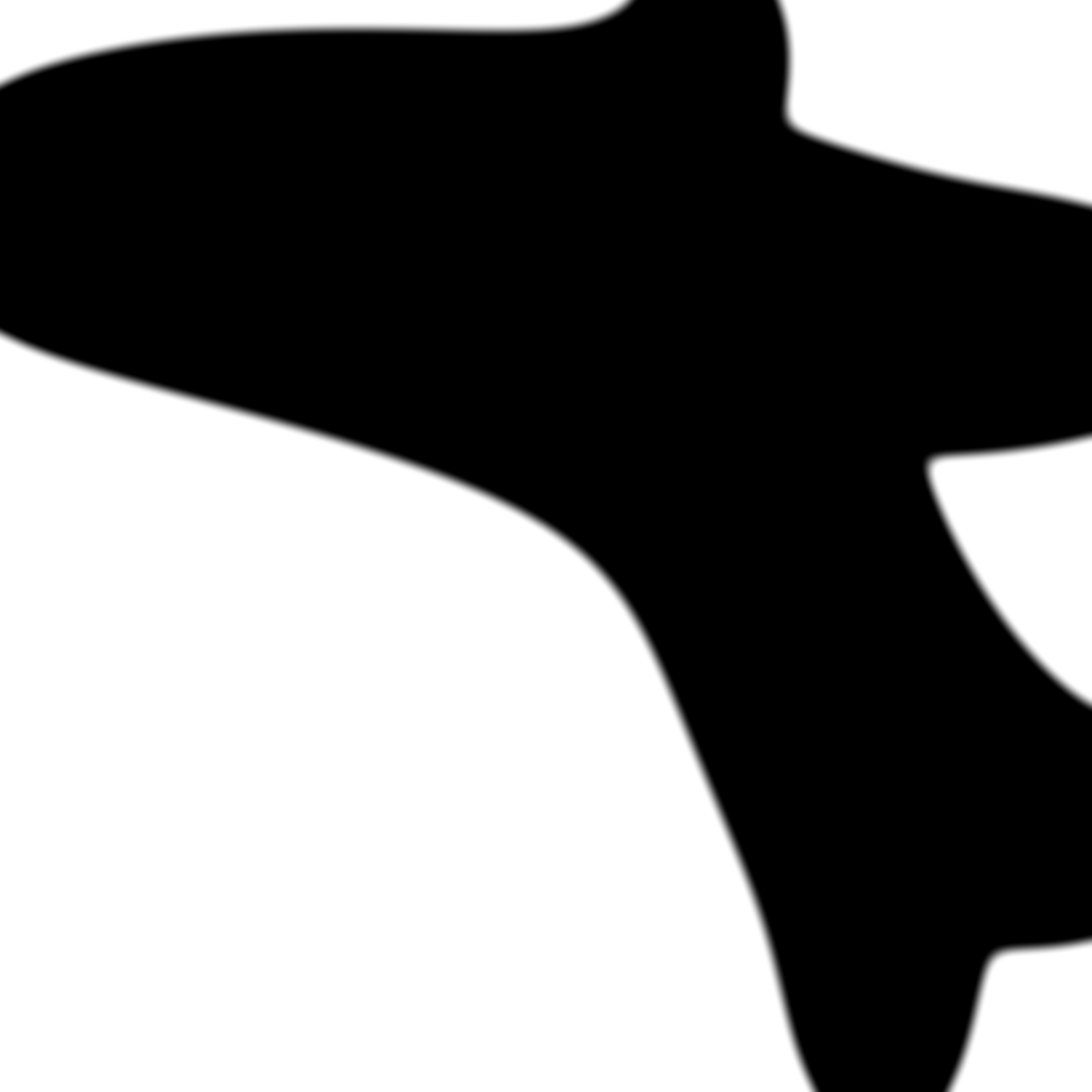,width=0.29\textwidth}
  \hfill
  \caption{The initial phase field $\varphi_0$ for the treelike structure and the phase field after
  6, 12, 36, 70, and 188 time steps (top left to bottom right).}
  \label{fig:num:TH:evolution}
\end{figure}

\subsection{A rugby ball}\label{ssec:num:RB}
We next investigate the overall behaviour of the adaptive concept and give an example showing the
influence of the parameters $\gamma$ and $\mu$ on the interfacial area. 
The aim is to optimize the shape of a ball in an outer flow as is investigated in
\cite{borrvall,pingen_TopoOpt_Boltzmann,Schmidt_shape_derivative_NavierStokes}.

In the computational domain $\Omega = (0,1)\times(0,5)$ we have a circle located at $M = (0.5,0.5)$ with
radius $r = \sqrt{(10\pi)^{-1}}$. On the boundary $\partial \Omega$ we impose Dirichlet data $\b
g\equiv(0,1)^T$ for the Navier--Stokes equations.
The domain is chosen large enough to neglect the
influence of the outflow boundary on the optimized topology.

In \cite{borrvall} it is shown that for Stokes flow the optimal topology equals a rugby ball, while
in \cite{pingen_TopoOpt_Boltzmann,Schmidt_shape_derivative_NavierStokes} the authors obtain an
airfoil-like shape for Navier--Stokes flow and small values of $\mu$.
The parameters used here are $\epsilon=0.005$ and $\overline{\alpha}=50$. For the adaptive concept
we fix $\theta^r = 0.2$, $\theta^c = 0.05$, $a_{\min}=10^{-7}$ and $a_{\max} = 5 \cdot 10^{-4}$. As
initial mesh we use a homogeneous mesh with mesh size $a_{\text{init}} = 1/1600$ and refine
the region $|\varphi_0|\leq 1$ to the finest level, where $\varphi_0$ denotes the initial phase field.

\subsubsection{Optimal shapes for various $\gamma$ and $\mu$}
We start with depicting our numerical findings for various values of $\gamma$ and $\mu$.
Here we proceed as follows.
We optimize the shape for decreasing values of $\gamma \in [10^{-4},10]$ and $\mu = 1$.
The optimal geometry for $\mu=1$ and $\gamma=10^{-4}$ thereafter is used as initial value for
decreasing $\mu \in [500^{-1},1]$ while $\gamma=10^{-4}$ is kept fix.
In Figure \ref{fig:num:RB:Results_gamma} we depict the optimal shapes for $\mu=1$ and
$\gamma \in \{10,0.1,0.01,0.0001\}$, and
in Figure \ref{fig:num:RB:Results_RE} we depict the optimal shapes for $\gamma = 10^{-4}$
and $\mu \in \{10^{-1},100^{-1},300^{-1},500^{-1}\}$.

\begin{figure}
  \centering
  \fbox{
  \epsfig{file=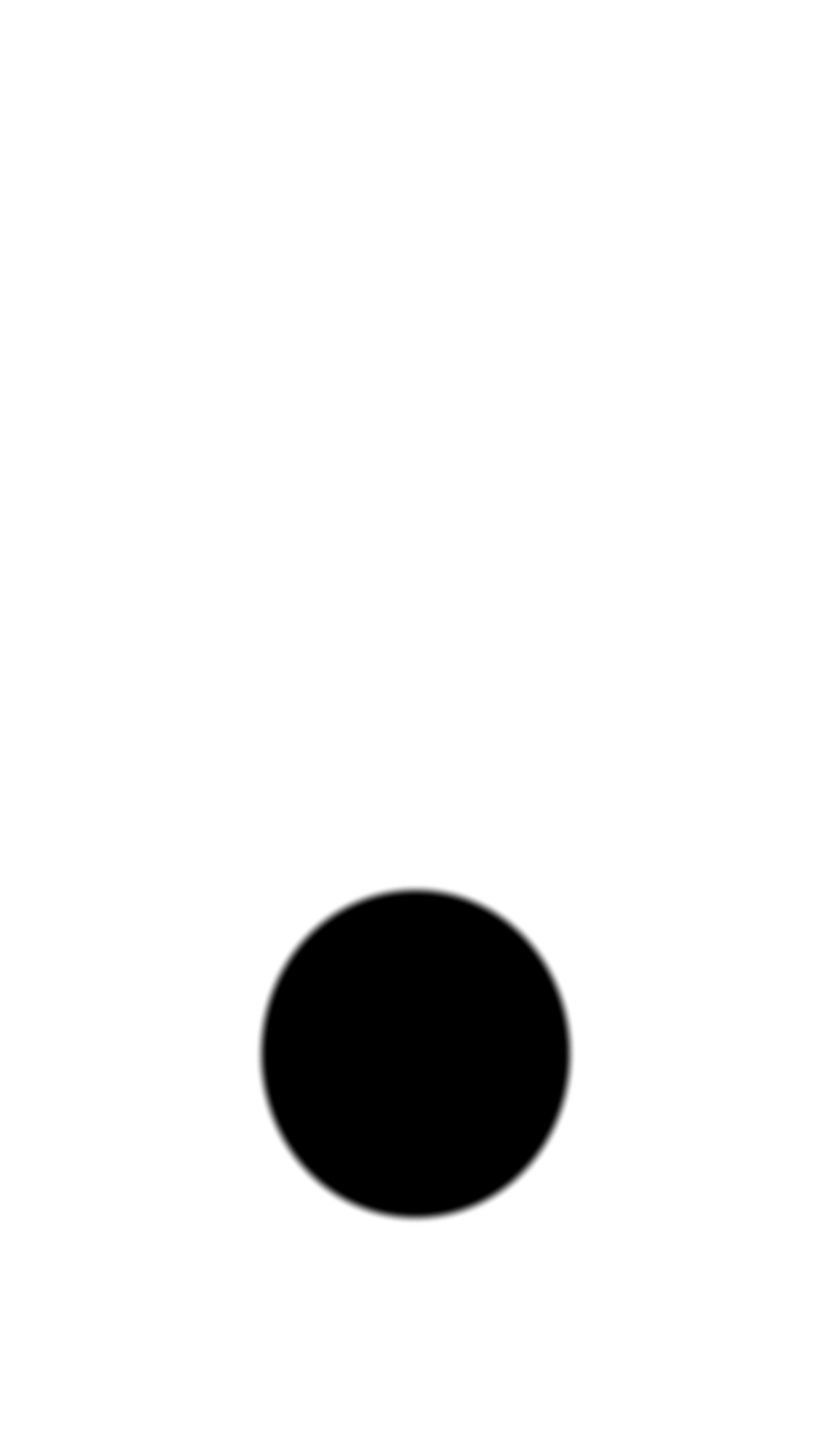, width=0.20\textwidth}
  }
  \hfill
  \fbox{
  \epsfig{file=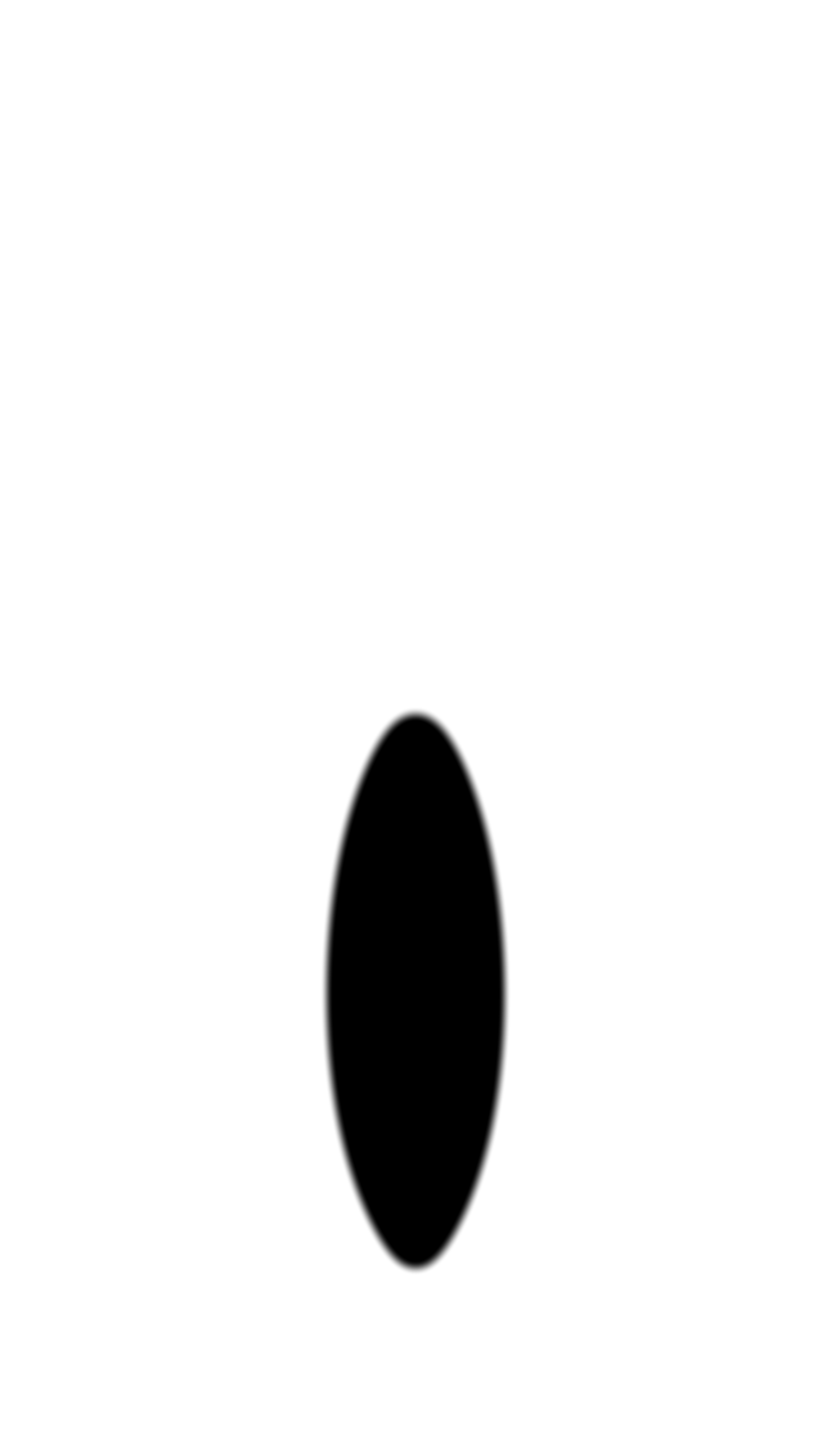, width=0.20\textwidth}
  }
  \hfill
  \fbox{
  \epsfig{file=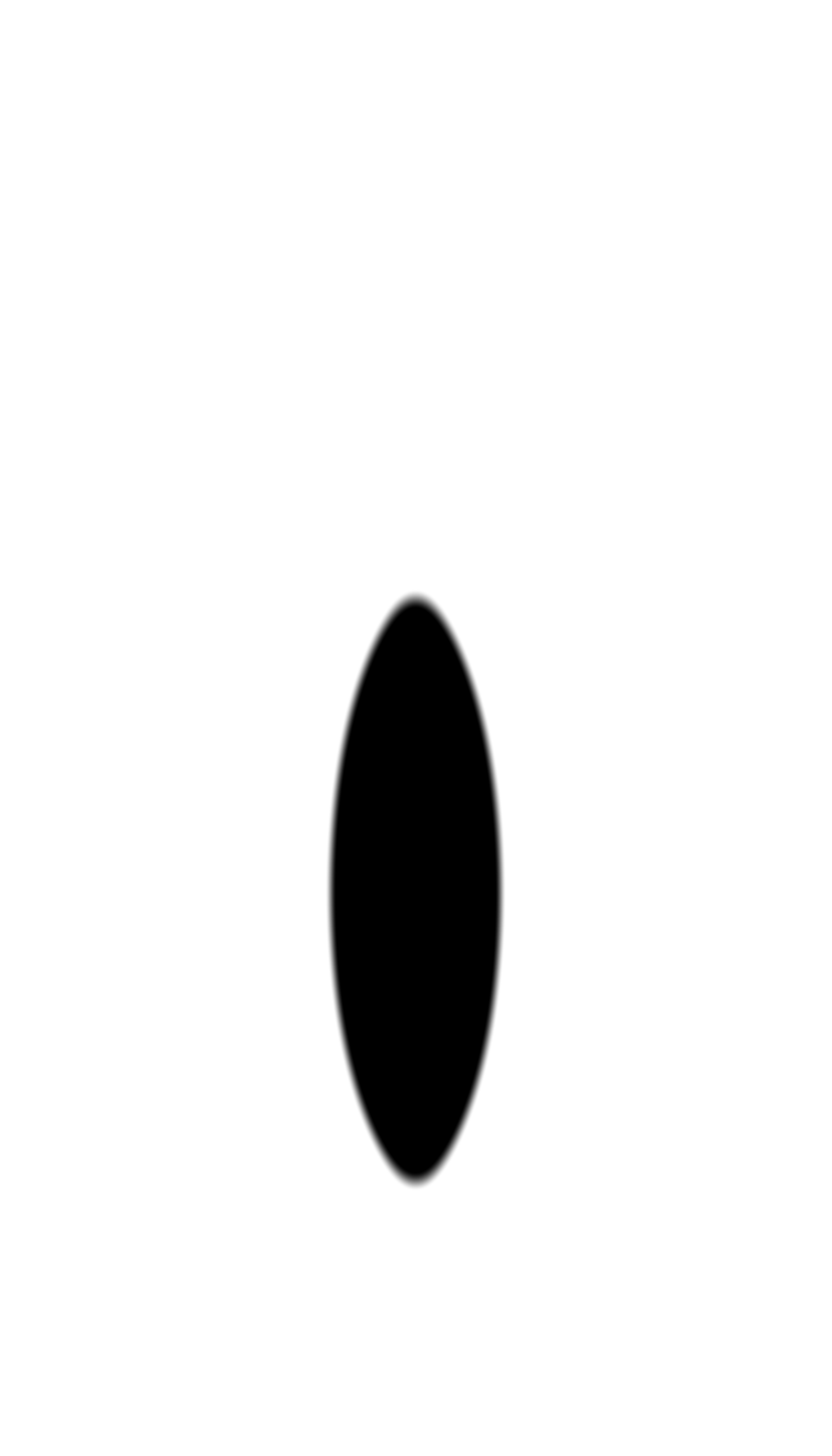, width=0.20\textwidth}
  }
  \hfill
  \fbox{
  \epsfig{file=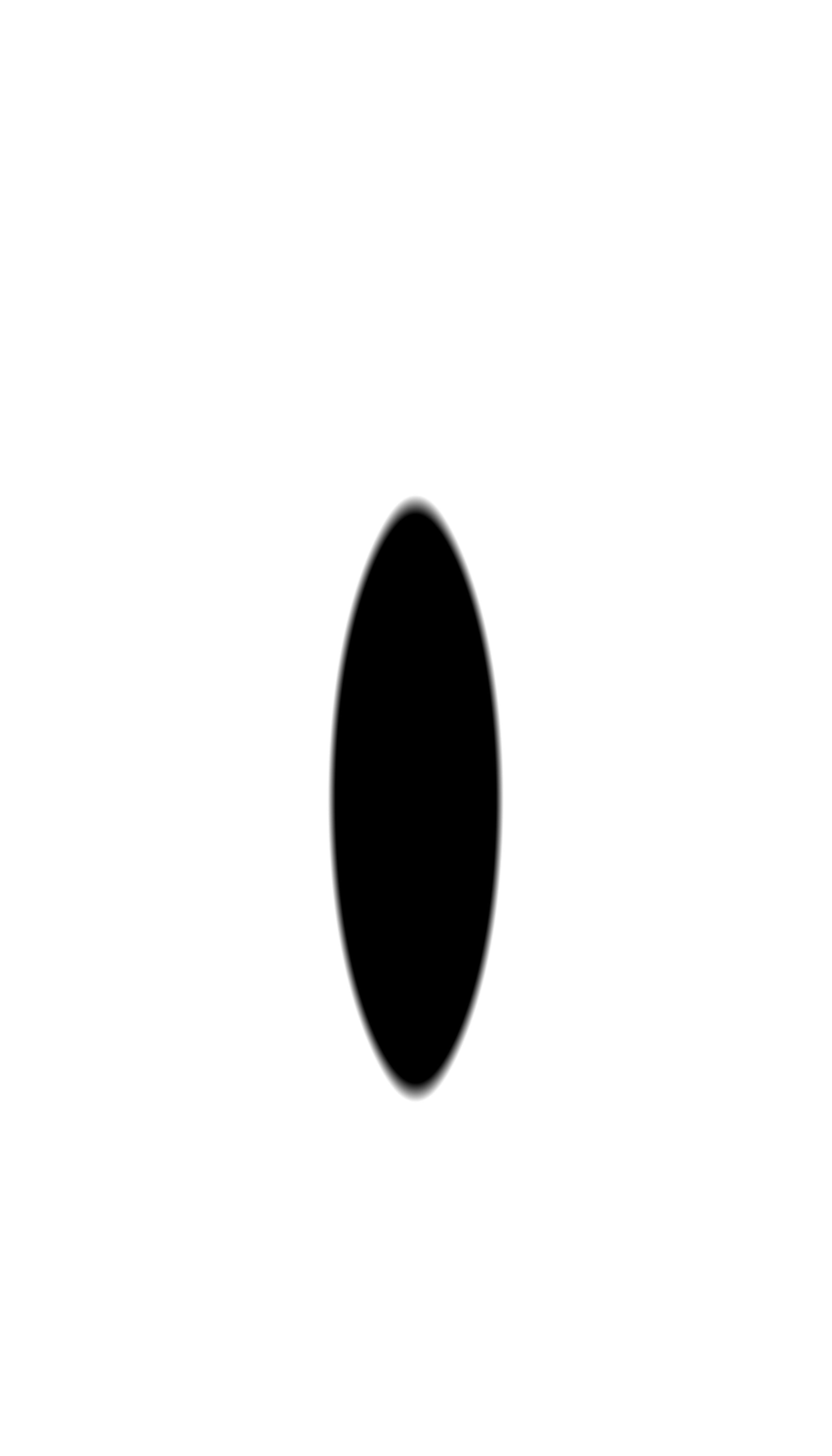, width=0.20\textwidth}
  }
  
  \caption{Optimal topologies for the rugby ball example for $\mu=1$ and
  $\gamma \in \{10, 0.1, 0.01, 0.0001\}$ (left to right).}
  \label{fig:num:RB:Results_gamma}
\end{figure}

\begin{figure}
  \centering
  \fbox{
  \epsfig{file=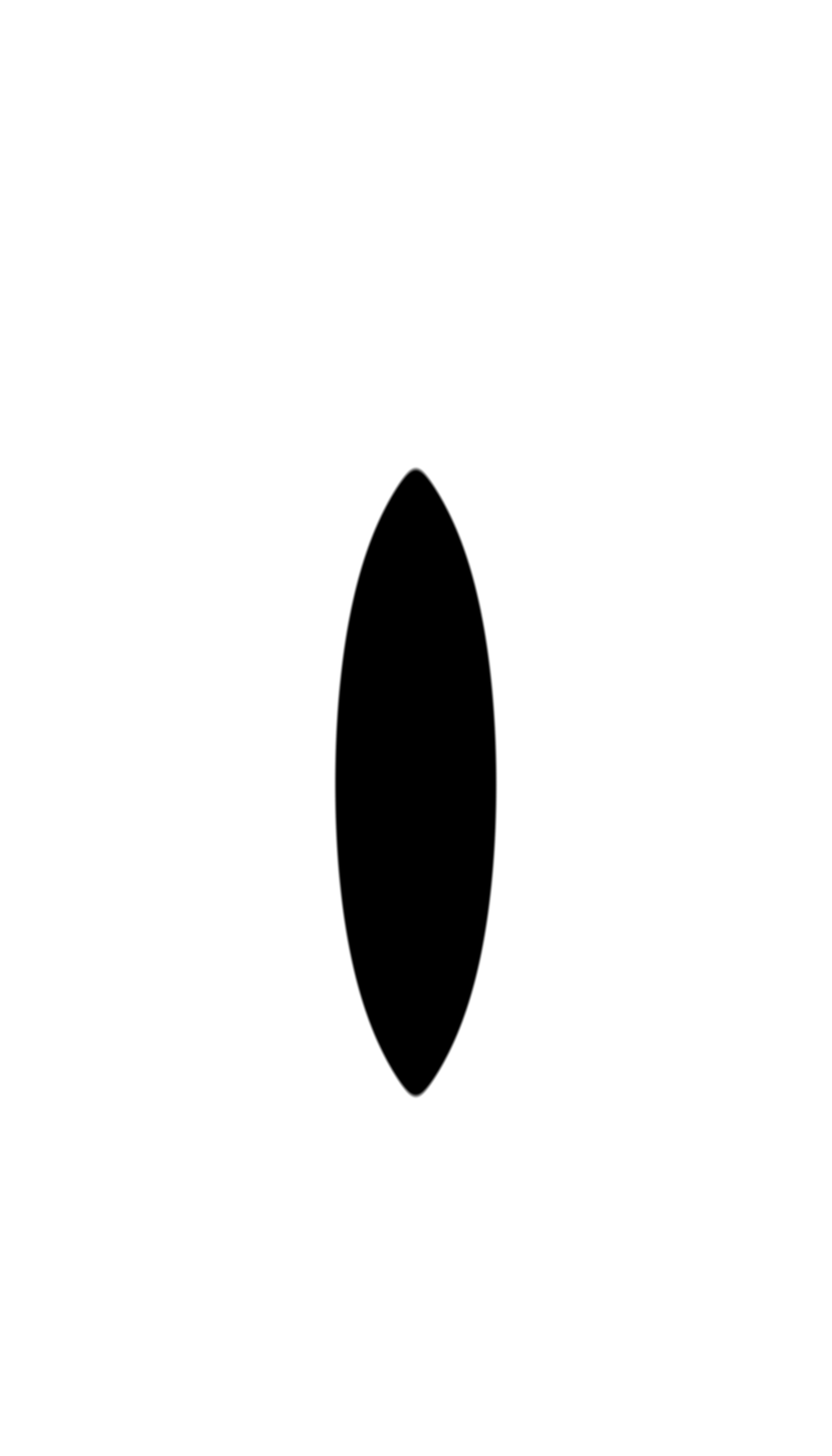, width=0.20\textwidth}
  }
  \hfill
  \fbox{
  \epsfig{file=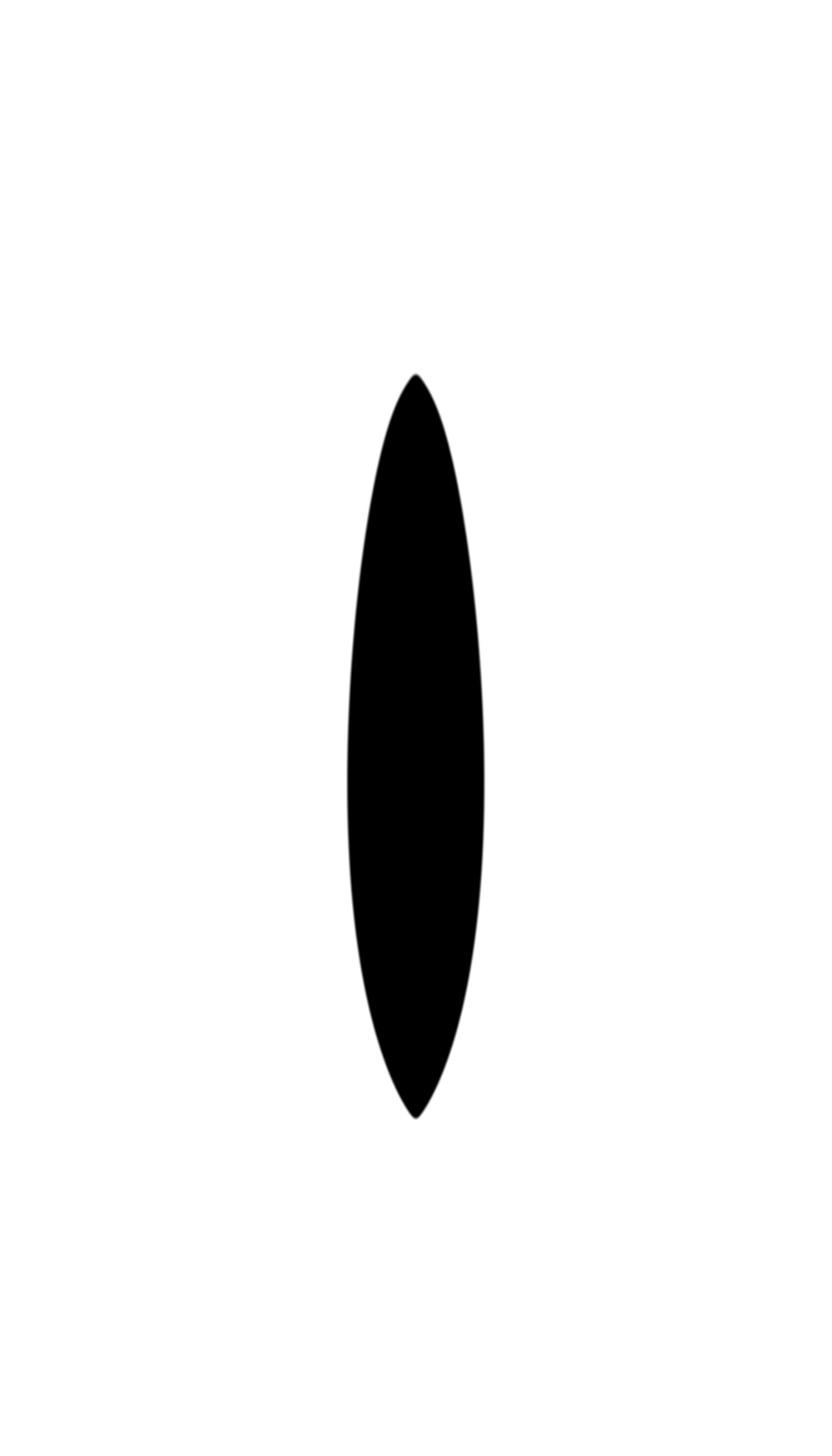, width=0.20\textwidth}
  }
  \hfill
  \fbox{
  \epsfig{file=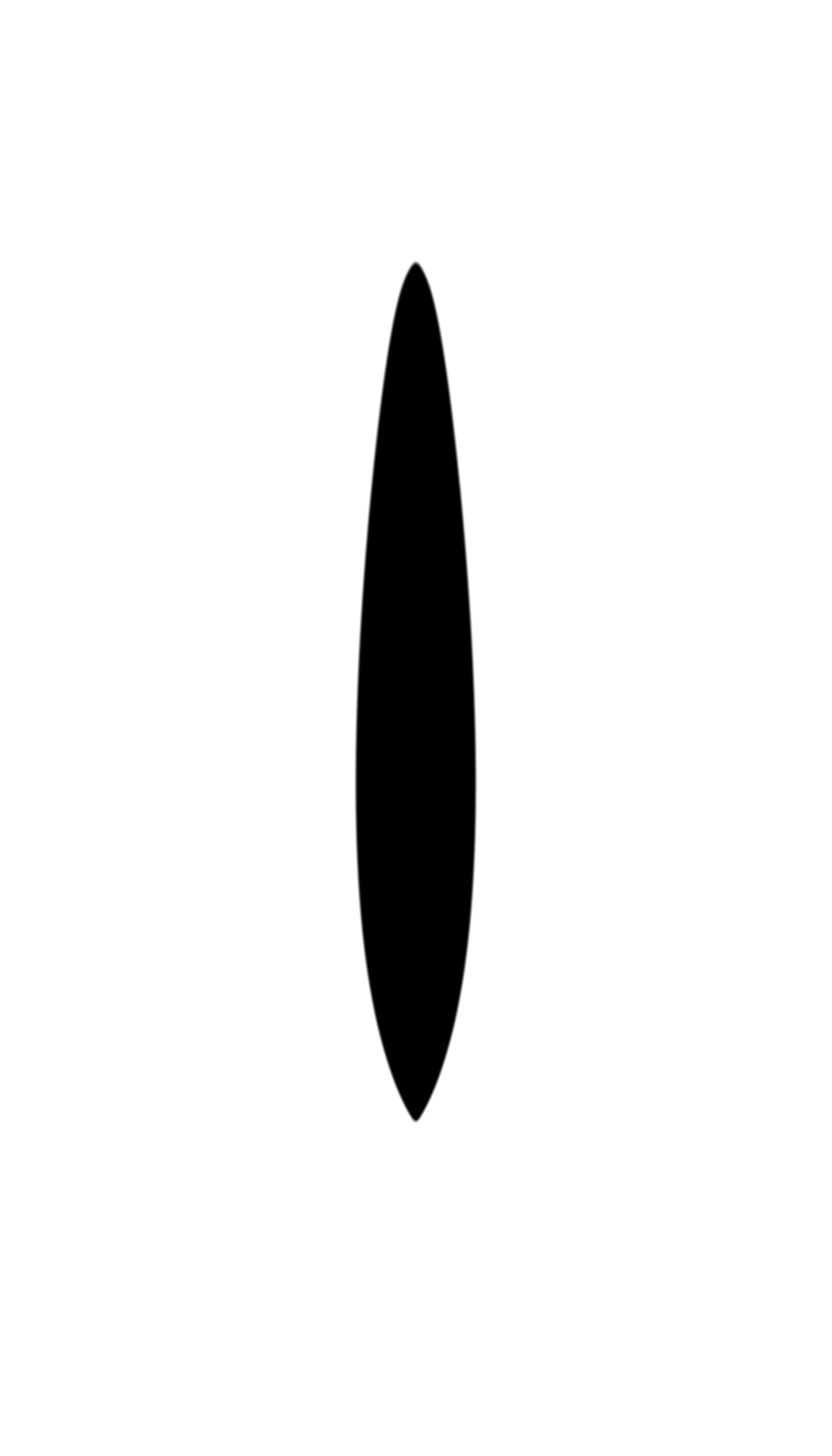, width=0.20\textwidth}
  }
  \hfill
  \fbox{
  \epsfig{file=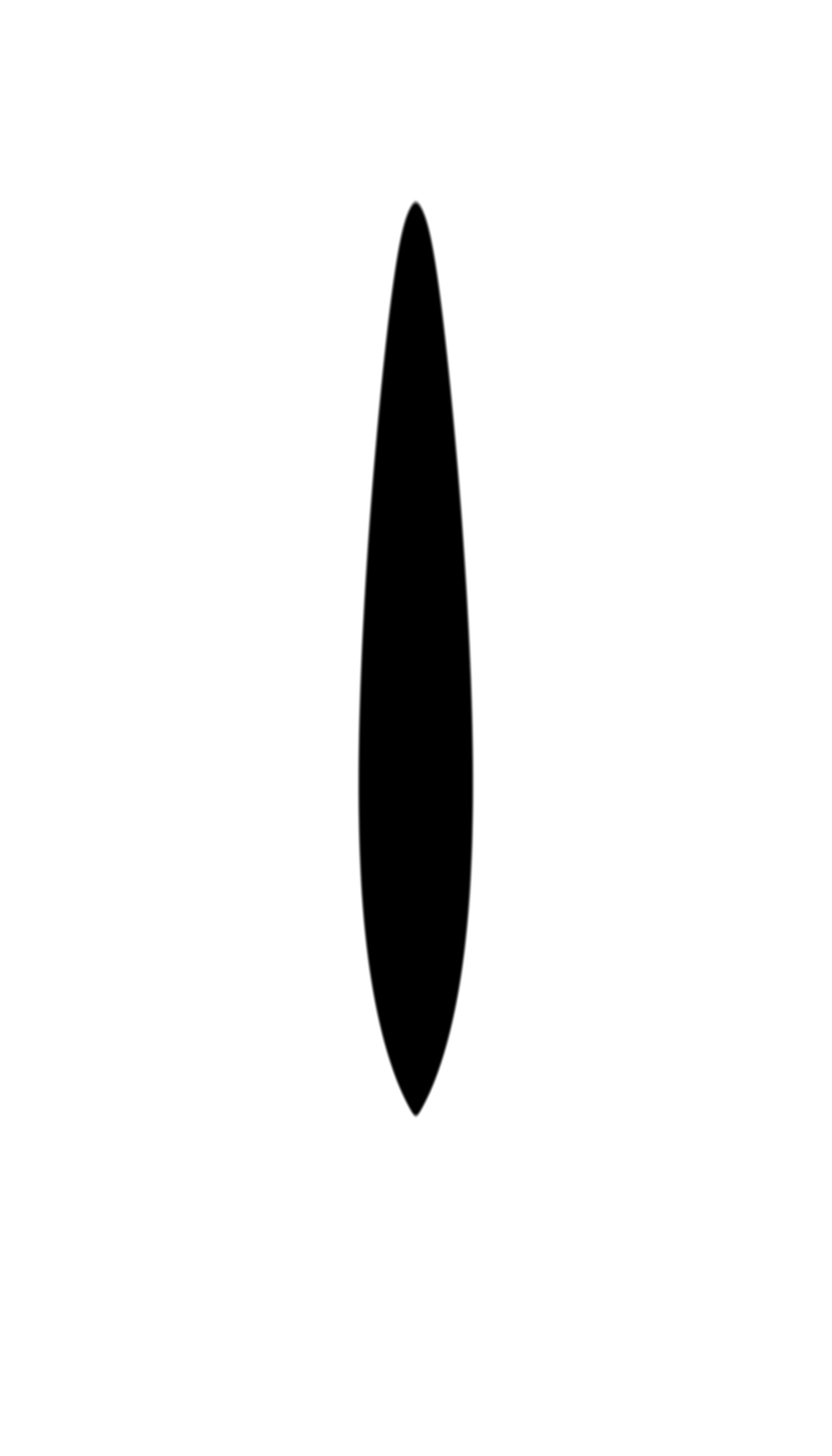, width=0.20\textwidth}
  }
  \caption{Optimal topologies for the rugby ball example for $\gamma=10^{-4}$ and
  $\mu \in \{10^{-1}, 100^{-1}, 300^{-1}, 500^{-1}\}$ (left to right).}
  \label{fig:num:RB:Results_RE}
\end{figure}

We see that for large values of $\gamma$ the Ginzburg--Landau energy dominates the minimizing
problem and thus we obtain an optimal shape which is close to a circle. With $\gamma$ getting smaller we obtain shapes that resemble
rugby balls like shapes as obtained in \cite{borrvall} for the Stokes flow.
In particular we see that the top and bottom tip get sharper as we decrease the value of $\gamma$.
This can be explained by the Ginzburg--Landau
energy. This term penalises the interfacial size and explains why for large values
of $\gamma$ the optimal shape is close to a circle. 
Note that the optimal shape can locate freely in
the computational domain and therefore the optimal shape for $\gamma=10^{-4}$ has a slightly larger
distance to the bottom boundary than the optimal shapes for larger $\gamma$.

As argued in \cite{pingen_TopoOpt_Boltzmann}, for $\mu$ taking smaller values, the optimal shape
tends to an airfoil. This is what we observe in our numerics, 
see  Figure \ref{fig:num:RB:Results_RE}.

For a quantitative description of the optimal shapes we follow 
\cite[Rem. 12]{Schmidt_shape_derivative_NavierStokes} and introduce the friction drag of an obstacle
in free flow as
\begin{equation}\label{eq:num:drag}
  F_D = 
  \int_{\{\varphi=0\}} -\mu \left(\left(\nu\cdot\nabla\right)\b u\right)\cdot a + p \nu\cdot  a\ds.
\end{equation}
Here $\nu$ is the unit normal on the boundary of the ball pointing inwards and $a$ is the direction of attack of
the flow field.
In our example we have $a = (0,1)^T$ since the flow is attaining from the bottom.
By using the Gauss theorem we write $F_D$ as an integral over the ball given by $\varphi <0$ and
obtain
\begin{equation}\label{eq:num:drag_gauss}
  F_D =-\int_{\{\varphi<0\}} \div \left( -\mu \nabla u_2 + (0,p)^T \right)\dx
  =  -\int_{\{\varphi<0\}}-\mu \Delta u_2 + p_y\dx.
\end{equation}
Note that the normal $\nu$ in \eqref{eq:num:drag} points into the rugby ball and thus we obtain the
minus sign in \eqref{eq:num:drag_gauss}. Here $u_2$ denotes the second component of the velocity
field $\b u$ and $p_y$ denotes the derivative of $p$ in $y$-direction.

As second comparison value we define the circularity of the rugby ball. This value is introduced in
\cite{Wadell_Circularity} to describe the deviation of circular objects from a circle.
It is defined by
\begin{equation}\label{eq:num:circularity}
  \theta =
  \frac{\mbox{Circumference of circle with same area}}
  {\mbox{Circumference of object}}
  =
  \frac{\sqrt{4\pi\int_{\{\varphi<0\}}\dx}}
  {\int_{\{\varphi=0\}}\ds}
  \leq 1,
\end{equation}
where a value of $\theta \equiv 1$ indicates a circle.

In Table \ref{tab:num:RB:dissPow_drag_circ} we give results for our numerical findings.
\begin{table}
\centering
\footnotesize 
\begin{tabular}[t]{ccccc}
$\gamma$ & $\mu$ & $F$ & $\theta$ & $F_D$\\
\hline 
10.0000&   1&   7.2266&   0.9996&  21.6140\\
 1.0000&   1&   6.5317&   0.9664&  18.5820\\
 0.1000&   1&   6.1828&   0.8005&  16.6710\\
 0.0100&   1&   6.1494&   0.7722&  16.4640\\
 0.0010&   1&   6.1480&   0.7681&  16.4510\\
 0.0001&   1&   6.1427&   0.7674&  16.4310
\end{tabular}
\begin{tabular}[t]{ccccc}
$\gamma$ & $\mu$ & $F$ & $\theta$ & $F_D$  \\
\hline
0.0001&  10$^{-1}$&   1.1353&   0.7335&   2.3596\\
0.0001& 100$^{-1}$&   0.1830&   0.6349&   0.3244\\
0.0001& 200$^{-1}$&   0.1188&   0.5901&   0.1910\\
0.0001& 300$^{-1}$&   0.0942&   0.5568&   0.1395\\
0.0001& 400$^{-1}$&   0.0805&   0.5403&   0.1114\\
0.0001& 500$^{-1}$&   0.0715&   0.5253&   0.0930
\end{tabular}
\caption{Comparison values for the rugby example. $F$ is the dissipative power, $\theta$ denotes
the circularity, and $F_D$ the drag force. The optimization aim is the minimization of the
dissipative power.}
\label{tab:num:RB:dissPow_drag_circ}
\end{table}
As discussed above for large values of $\gamma$ the Ginzburg--Landau energy dominates the functional
under investigation. This results in optimal shapes that are close to circles as can be seen for
$\gamma=1$ and $\gamma=1$ where we have $\theta=1$ and $\theta=0.97$ respectively. We further see
that for $\gamma=0.01$ the optimal shape is determined by the dissipative power, since
the results for $\gamma=0.01$ and $\gamma=0.0001$ are very close together.
Concerning the dependence with respect to $\mu$ we see how the dissipative energy, which scales with
$\mu$, decreases with decreasing $\mu$. We also obtain that both the circularity and the drag are
reduced for smaller values of $\mu$. For the drag we have approximately $F_D \sim \mu^{0.84}$.

\subsubsection{Behaviour of the adaptive concept}
Next we investigate the behaviour of the adaptive concept. Since the error indicators  only contain
the jumping terms of the gradient, 
we expect the indicators mainly to be located at the borders of the interface, i.e. the isolines
$\varphi=\pm 1$.
In Figure \ref{fig:num:RB:ErrorEst} we depict the distribution of the error indicator
$\eta_{T_E}$ for the optimal topology for $\gamma=10^{-4}$ and $\mu=1$.

\begin{figure}
  \centering
  \fbox{
  \epsfig{file=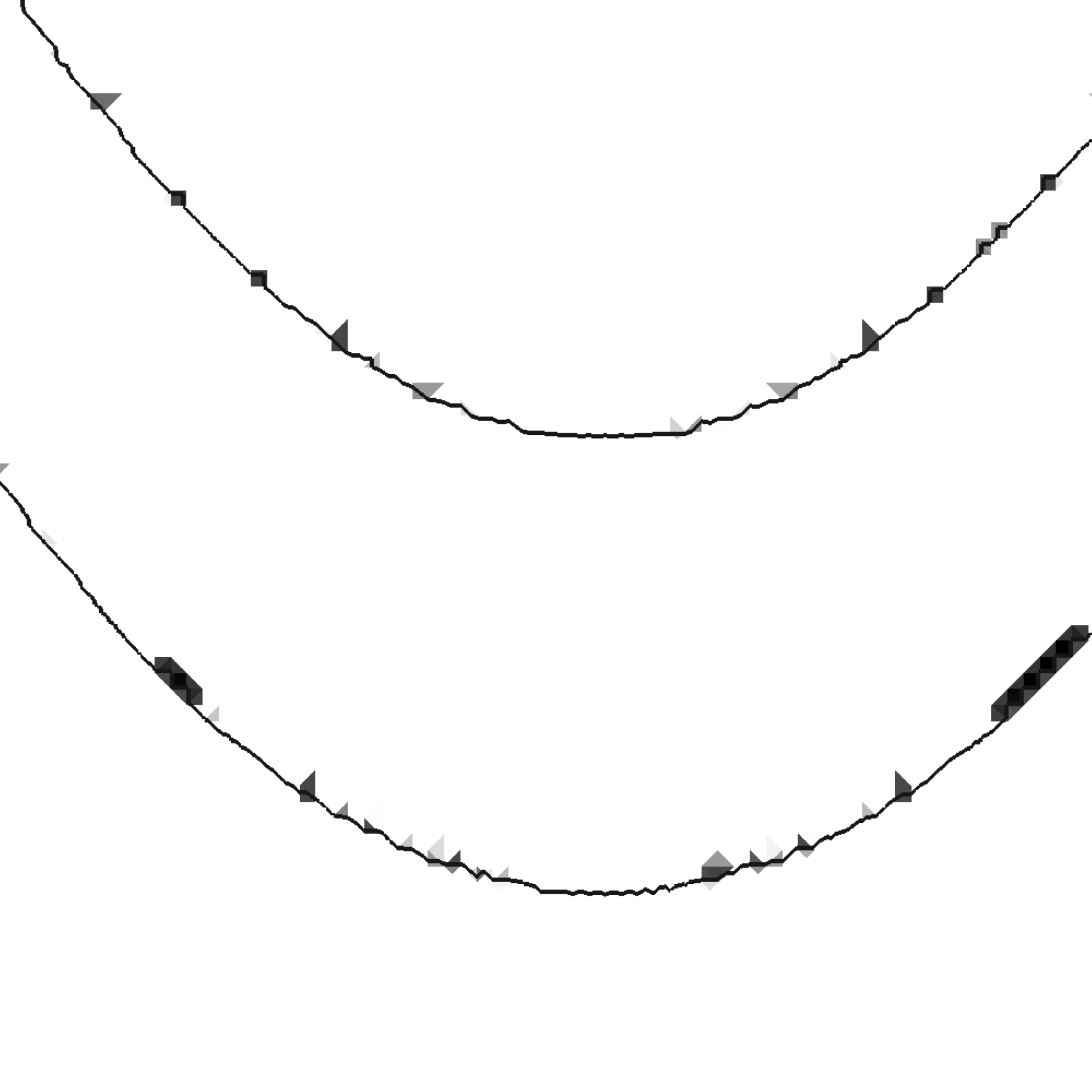, width=0.40\textwidth}
  }
  \fbox{
  \epsfig{file=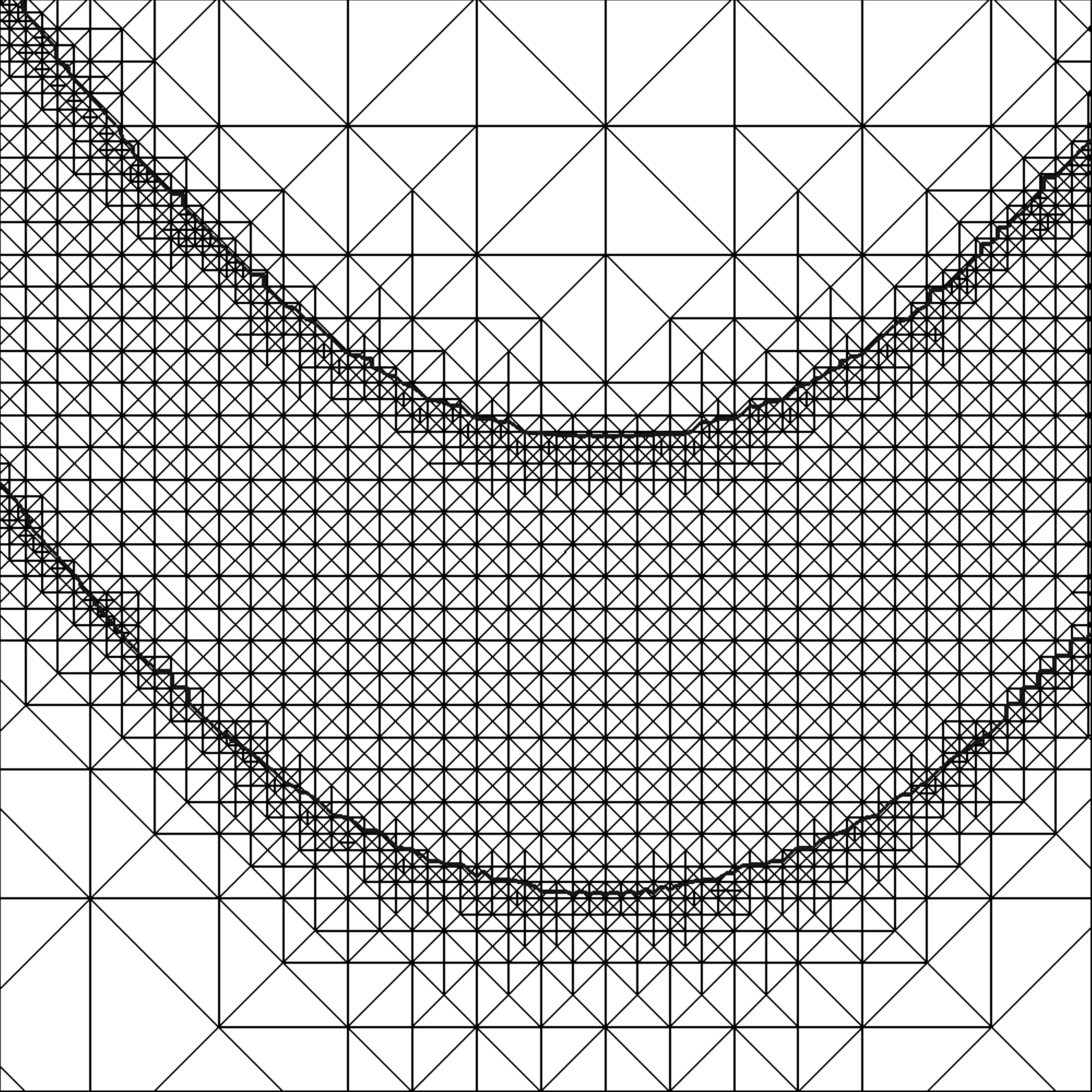, width=0.40\textwidth}
  }
  \caption{The bottom arc of the rugby ball for $\gamma=10^{-4}$ and $\mu=1$.
  The distribution of
  $\eta_{T_E}$ across the interface is shown in the left plot, where
  darker areas indicate larger error.
  The spatial resolution of the interface is depicted in the right plot.
  The bold lines   indicate the discrete sets $\varphi\equiv \pm1$. }
  \label{fig:num:RB:ErrorEst}
\end{figure}

We observe from the left plot, that the indicator $\eta_{T_E}$ is concentrated at the discrete
isolines $\varphi=\pm 1$.
Here the mesh is refined to the finest level as we see in the right plot. Inside the interface the
triangles are only mildly refined. Here the phase field tends to be linear and thus a high spatial
resolution is not required to get a well resolved phase field.

\subsubsection{A view on mixing energy} \label{ssec:num:MixingEnergy}
From the point of view of Cahn--Hilliard theory, \eqref{eq:MY:CahnHilliard} alone for fixed
vector fields $\b u,\b q$ can be regarded as the Cahn--Hilliard system with a free energy $F$ given
by
\begin{align}
  F(\varphi) = \frac{\gamma}{\epsilon}(1-\varphi^2)
  + \frac{\hp}{2}\lambda^2(\varphi)
  + \alpha_\epsilon(\varphi)\left(\frac{1}{2}|\b u|^2 - \b u \cdot \b q\right).
  \label{eq:num:RB:mixingEnergy}
\end{align}
The term $\frac{1}{2}|\b u|^2 - \b u \cdot \b q$ is assumed to be
non negative.
For $F$ we require two distinct minima located at $\approx \pm 1$. If $|\varphi|>1$ holds it is
reasonable to assume that $\hp \lambda^2(\varphi)$ is the dominating term and in the ongoing we
investigate the distribution of $F$ inside the interface defined by $|\varphi|\leq 1$.
We note that this distribution in fact depends on $\gamma,\epsilon,\overline \alpha$, and $\mu$.
As in Section \ref{ssec:num:epsilon} we fix $\epsilon$ to be $0.005$. Since both $\overline \alpha$ and
$\gamma$ give a weighting of the two energy terms we also fix $\overline \alpha \equiv 50$ as
proposed in Section \ref{ssec:num:alphaepsilon}.
Thus the free parameters in this investigation are $\gamma$ and $\mu$.

In Figure \ref{fig:num:RB:MixingEnergy}
we show the distribution of the terms
$\alpha_\epsilon(\varphi)(\frac{1}{2}|\b u|^2 - \b u\cdot \b q)$
and
$\frac{\gamma}{\epsilon}(1-\varphi^2)$
at the bottom arc of the optimized rugby ball.

\begin{figure}
  \centering
  \epsfig{file=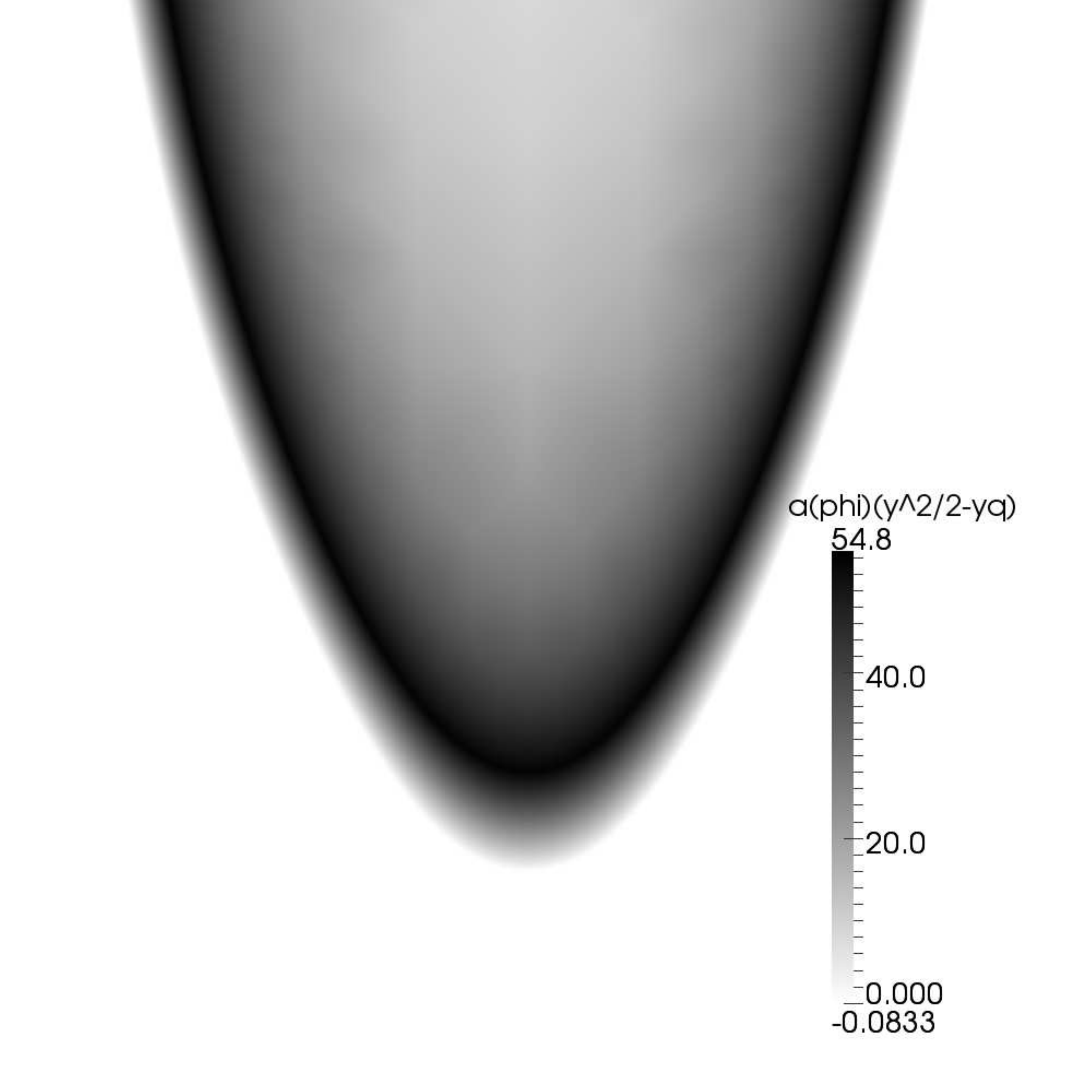,width=0.4\textwidth}
  \epsfig{file=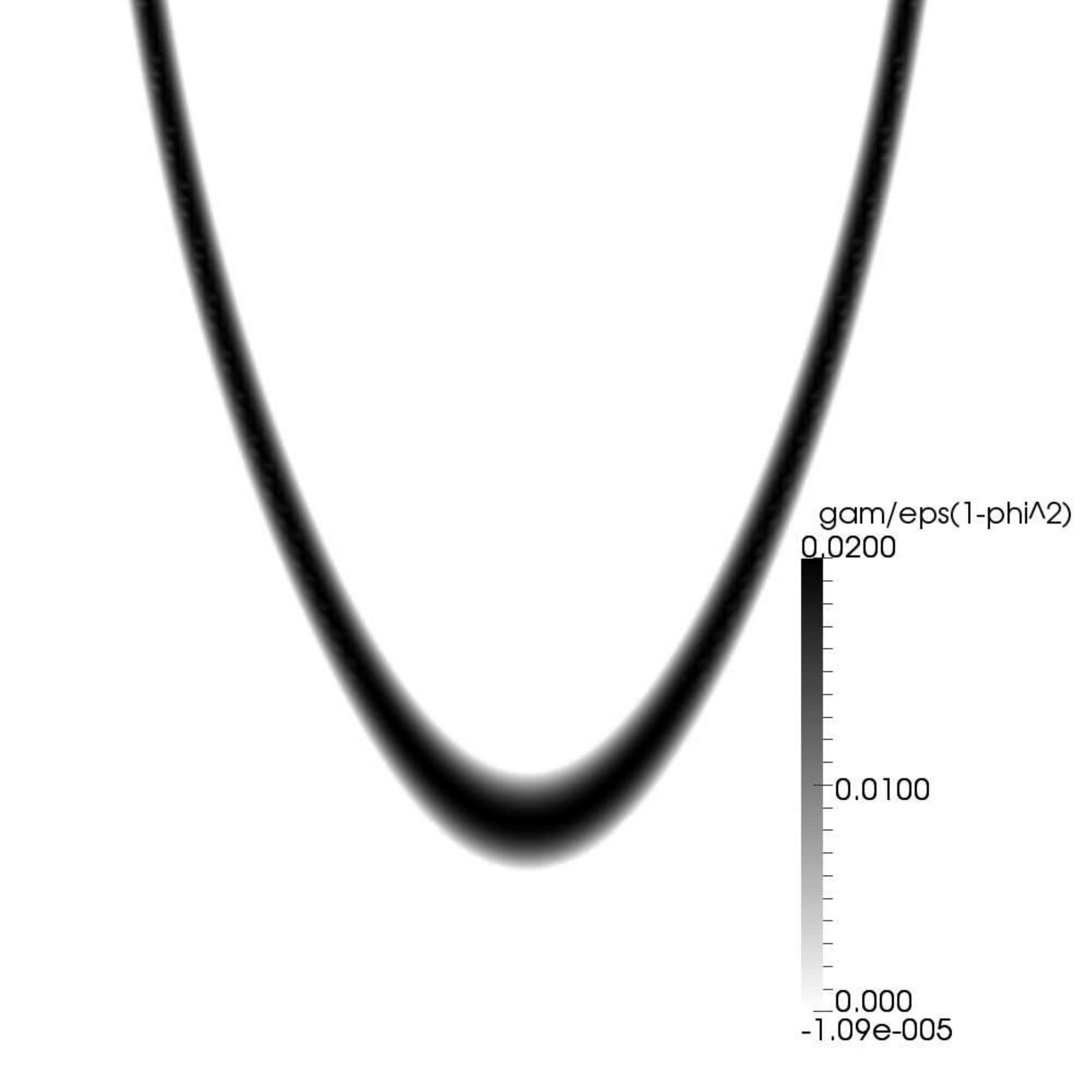,width=0.4\textwidth}
  \caption{The energies $\alpha_\epsilon(\varphi)(\frac{|\b u|^2}{2} - \b u \cdot \b q)$
  (left plot) and $\frac{\gamma}{\epsilon}(1-\varphi^2)$ (right plot) at the bottom of the
  optimized topology for $\gamma=10^{-4}$ and $\mu=1$.}
  \label{fig:num:RB:MixingEnergy}
\end{figure}

We see, that the term $\alpha_\epsilon(\varphi)(\frac{1}{2}|\b u|^2 - \b u\cdot \b q)$ is
larger then $\frac{\gamma}{\epsilon}(1-\varphi^2)$ and thus dominates the demixing.
The term admits a maximum inside the interface and takes smaller values outside of the
interface. We note that the term $\frac{\gamma}{\epsilon}(1-\varphi^2)$ is symmetric across the
interface, while  $\alpha_\epsilon(\varphi)(\frac{1}{2}|\b u|^2 - \b u\cdot \b q)$ takes its maximum
near $\varphi=-1$ and especially also takes large values inside the porous material.

\subsection{An optimal embouchure for a bassoon}\label{ssec:num:FG}
As outlook we investigate the optimal shape of an embouchure for a bassoon. 
In the group of Professor Grundmann at the Technische Universit\"at  Dresden by experiments an
optimized shape was found  that has a smaller pressure loss along the pipe, while it only slightly
changes the sound of the bassoon, see \cite{grundmann}.

We apply our optimization algorithm to the problem of finding an optimal embouchure in order 
to illustrate possible fields of application of our approach. We note that again we
minimize the dissipative energy, and that we do not take further optimization constraints into
account. 

We proceed as described in Section \ref{ssec:num:TH} to find optimal shapes in
$\Omega=(0,1)^2$ for the parameters $\gamma=1e-4$, $\epsilon=0.005$, $\overline
\alpha =50$ and $\mu= 1e-3$.
We start with a constant initial phase field using $\beta=0.1$. 
The inflow is set to $x\equiv 1$ and we use the parameters $m_i=0.5$, $l_i=0.1$,
$h_i=1$ in \eqref{eq:num:parabolicBoundary} both for the $x$ and $y$ direction
of the boundary velocity field, resulting in an inflow pointing $45\degree$ upwards. 
We set the outflow to $y\equiv 0$ and  consider two scenarios. 
For the first scenario we use the values $m_1=0.8$, $l_1=0.2$ and $h_1=0.5$ in
\eqref{eq:num:parabolicBoundary}, and for the second example we use $m_2=0.3$,
$l_2=0.2$ and $h_2=0.5$.

In Figure \ref{fig:num:FG:results} we show our numerical finding. 
We obtain a straight and wide pipe that directly connects inflow and outflow boundary. This
corresponds to our optimization aim, i.e. minimizing the dissipative power. 
Similar trends for the optimized shape of the embouchure were also observed by the group in
Dresden.

\begin{figure}
  \centering
  \epsfig{file=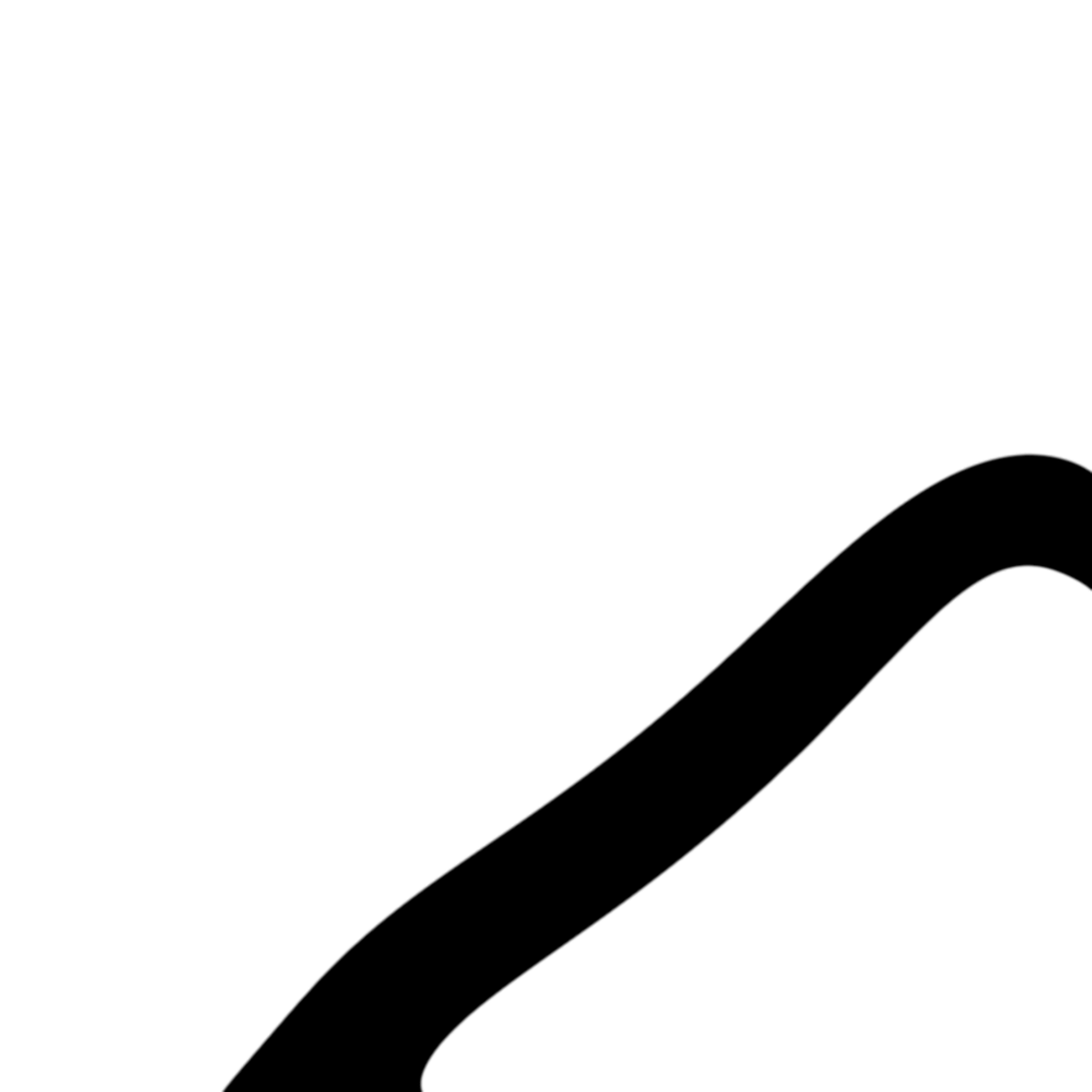,width=0.4\textwidth}
  \hspace{2cm}
  \epsfig{file=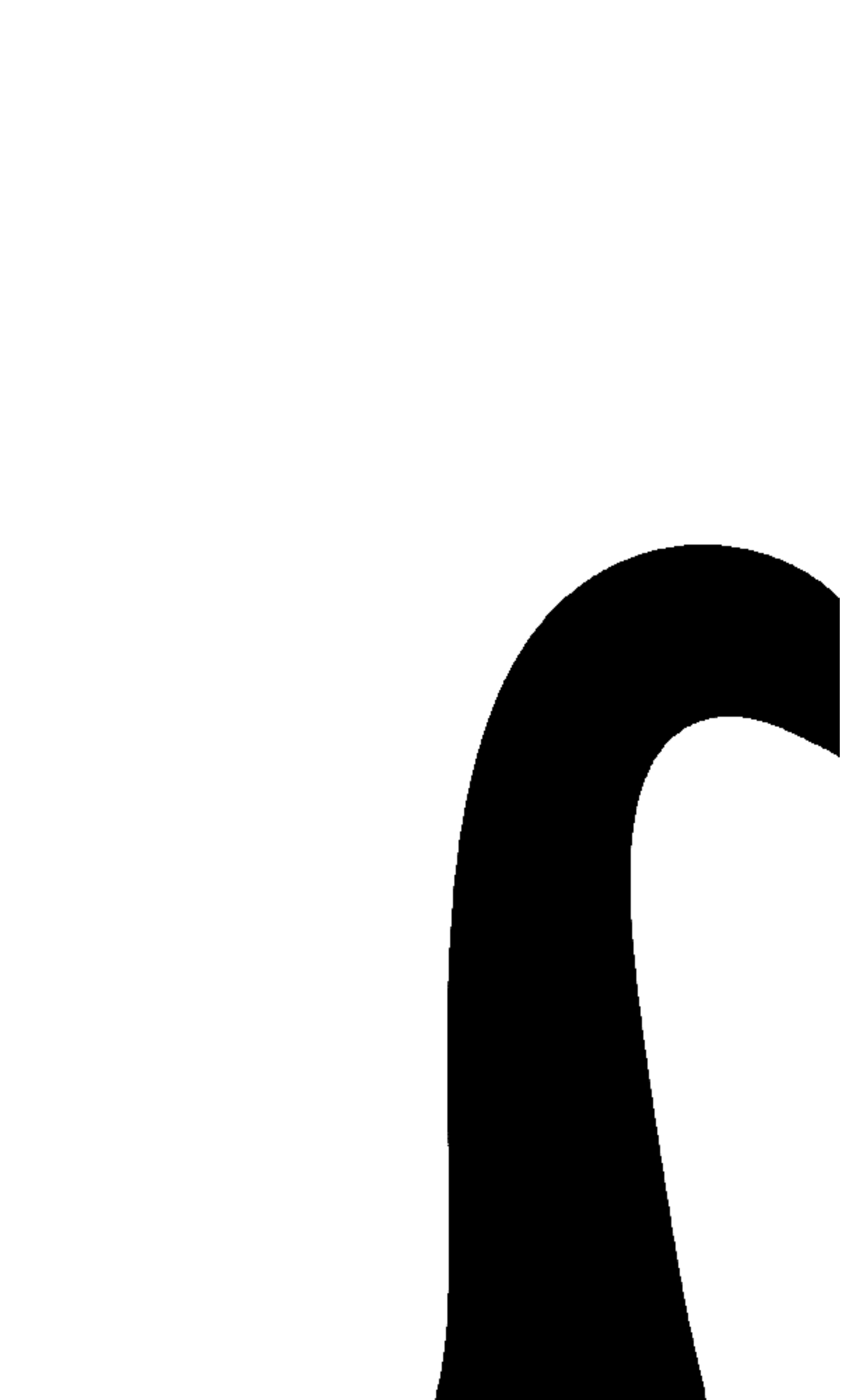,width=0.24\textwidth}  
  \caption{Optimized shapes for the bassoon example for
  $\mu=1000^{-1}$. First scenario on left side, second scenario on right side.
   The inflow is on the right side.}
  \label{fig:num:FG:results}
\end{figure}

\bibliographystyle{plain}
\bibliography{literature}

\end{document}